\newtheorem{thm}{Theorem}[section]
\newtheorem{cor}[thm]{Corollary}
\newtheorem{prop}[thm]{Proposition}
\newtheorem{lem}[thm]{Lemma}
\theoremstyle{definition}
\newtheorem{defn}[thm]{Definition}
\newtheorem{exmp}[thm]{Example}
\theoremstyle{remark}
\let\c@equation\c@thm
\numberwithin{equation}{section}
\title{Graphoids}
\author{Neslihan G\"{u}g\"{u}mc\"{u}, Louis H.Kauffman, and  Puttipong Pongtanapaisan}
\begin{document}

 \begin{abstract}
We study invariants of virtual graphoids, which are virtual spatial graph diagrams with two distinguished degree-one vertices modulo graph Reidemeister moves applied away from the distinguished vertices. Generalizing previously known results, we give topological interpretations of graphoids. There are several applications to virtual graphoid theory. First, virtual graphoids are suitable objects for studying knotted graphs with open ends arising in proteins. Second, a virtual graphoid can be thought of as a way to represent a virtual spatial graph without using as many crossings, which can be advantageous for computing invariants. 
 \end{abstract}

\maketitle
\section{Introduction}
Motivated by the fact that it is not necessary to draw the entire knot diagram to specify a knot, Turaev introduced the notion of knotoids \cite{Turaev}. Recall that a knotoid is an equivalence class of immersions of an interval into the plane or the sphere modulo the classical Reidemeister moves applied away from the endpoints of the interval. Since knotted proteins are open curves, knotoids function as suitable models to study protein entanglements \cite{Dab,Dim}. It is now known that  there is a one-to-one correspondence between spherical knotoids and ordered oriented $\theta$-curves in $S^3$ with a preferred unknotted cycle \cite{Turaev}, and that there is a one-to-one correspondence between planar knotoids and line-isotopy classes of smooth open oriented curves in 3-space \cite{GK}. Furthermore, Kauffman's virtual knot theory \cite{KVirtual} turns out to provide useful bounds for knotoid invariants \cite{GK}.

 Another topological structure that arises in proteins is a spatial graph \cite{Dab}. Our goal is to introduce virtual graphoids, study their invariants, and we hope to use it to study open spatial graphs in proteins. Our primary way of obtaining invariants for a graphoid is to connect up the open ends in the diagram by an arc, declare any crossing that is a consequent of that arc intersecting the diagram to be all virtual crossings, and then study the invariant of the associated virtual spatial graphs. Some invariants of classical graphoids were studied in \cite{Graphoid} and \cite{Adams}. In this paper, we generalize those invariants to the virtual setting and investigate some new invariants such as the Yamada polynomial. Our crossing number analysis is inspired by methods used by Motohashi, Ohyama, and Taniyama in \cite{Motohashi}.

This paper is organized as follows. In Section \ref{sec:closures}, we define virtual graphoids and discuss ways to associate other knotted objects to spherical graphoids by connecting up the open ends in the diagram. Namely, taking the underpass closure of a classical graphoid gives a classical spatial graph and taking the virtual closure to a virtual graphoid gives a virtual spatial graph associated to the graphoid. We demonstrate that graphoid gives a simpler presentation of a classical spatial graph, so that calculating invariants such as the fundamental group can be done faster on a graphoid diagram.

In Section \ref{sec:TopologicalInterpretation}, we discuss topological interpretations of topological graphoids (planar and spherical). In Section \ref{section:replacement}, we gather simple ways to obtain invariants of virtual graphoids by performing local replacements in the diagrams. In Section \ref{section:Yamada} and \ref{section:pure}, we discuss Yamada polynomials for virtual graphoids and discuss its application to the crossing number.
\section*{Acknowledgement}
We would like to thank Eleni Panagiotou for posing the question that inspired this paper at the BIRS workshop 21w5232. We are grateful for helpful discussions and encouragement from Kasturi Barkataki, Micah Chrisman, and Homayun Karimi. Research conducted for this paper is supported by the Pacific Institute for the Mathematical Sciences (PIMS). The research and findings may not reflect those of the Institute.
\section{Graphoids and their closures}\label{sec:closures}
A \textit{virtual graphoid diagram} is a graph with two distinguished degree one vertices generically immersed in $S^2$, where each double point is decorated as either a classical crossing or a virtual crossing. We call the two distinguished degree-one vertices the \textit{head}, and the \textit{tail} (some authors referred to the tail as the \textit{leg)}.

A (topological) \textit{virtual graphoid} is an equivalence class of virtual graphoid diagrams modulo Reidemeister moves shown in Figure \ref{graphReid}, Figure \ref{fig:virtgraphReid} and isotopy of the plane.  If we do not allow move (VI) in our equivalence relation, then we obtain a \textit{rigid vertex virtual graphoid}. Note that move (V) in Figure \ref{graphReid} is the rigid vertex move. The moves shown in Figures \ref{forbidden} and \ref{fig:forbiddengraph} are not allowed. Note that in move (IV) the vertex through which we allow a strand with crossings on it can slide, is of some degree $k \geq 2$, that is, it is not the head or the tail of a graphoid diagram.

We will not distinguish between an edge and an edge containing degree 2 vertices. Note that forbidding the move in Figure \ref{fig:virtforbidden} produces a strict version of virtual knotoids that we do not use in this paper. Thus, the reader can assume for the present work that Figure 4 move is allowed. This means that we allow general detour moves: a consecutive sequence of virtual crossings can be excised and replaced by any arc between the same endpoints,that also has a consecutive sequence of virtual crossings.

We remark that in the definition above, if we do not allow virtual crossings in our diagram and we only consider our diagrams up to the Reidemeister moves in Figure \ref{graphReid}, we obtain \textit{classical graphoids}, which was also defined in \cite{Graphoid}. 

\begin{defn}
    We say that a classical graphoid is \textbf{pure} if it does not admit a diagram where the head and the tail lie in the same region of the diagram.
\end{defn}
\begin{figure}[ht!]

\centering
\includegraphics[width=8cm]{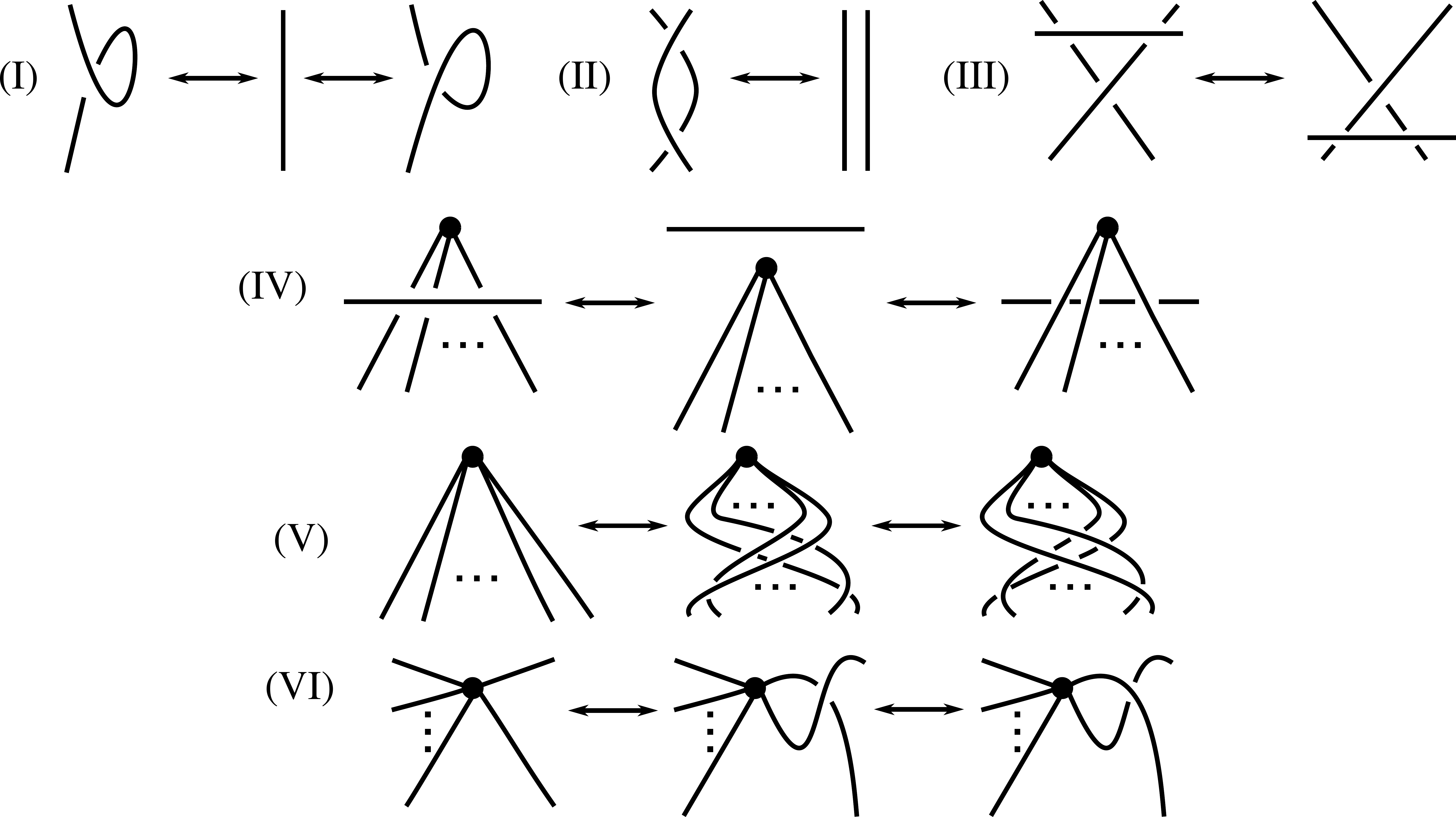}
\caption{Classical Reidemeister moves for spatial graphs.}\label{graphReid}
\end{figure}
\begin{figure}[ht!]

\centering
\includegraphics[width=7cm]{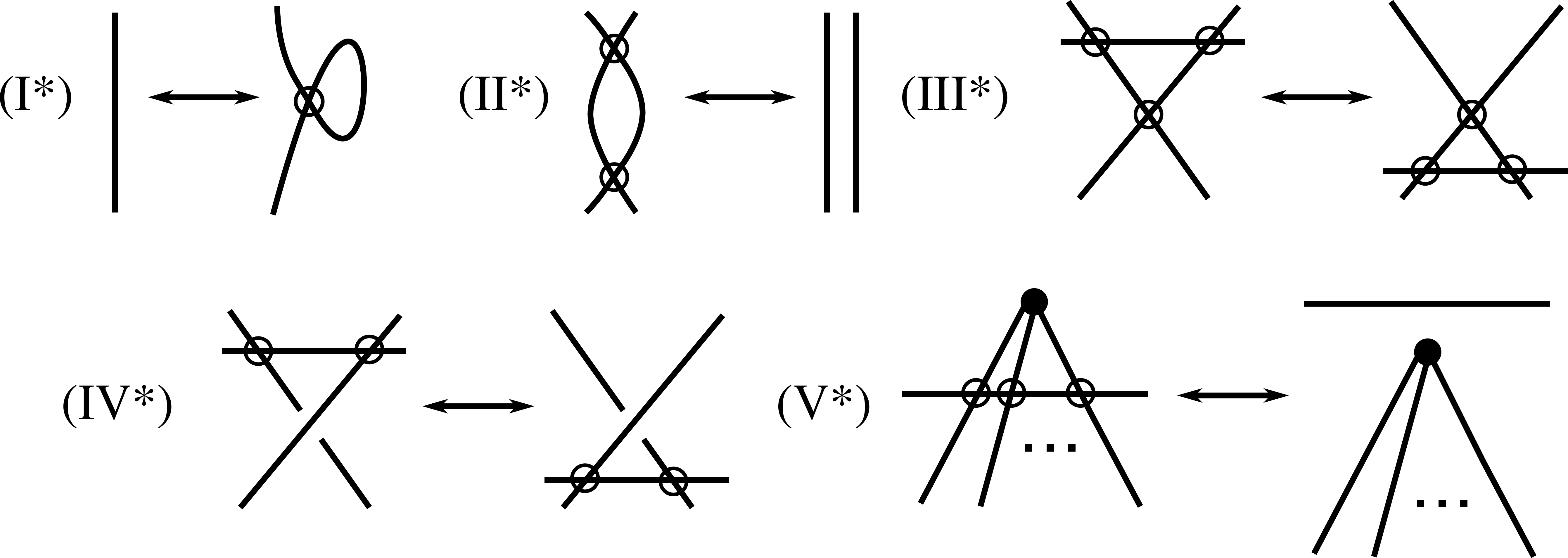}
\caption{Reidemeister moves involving virtual crossings.}\label{fig:virtgraphReid}
\end{figure}
\begin{figure}[ht!]

\centering
\includegraphics[width=4cm]{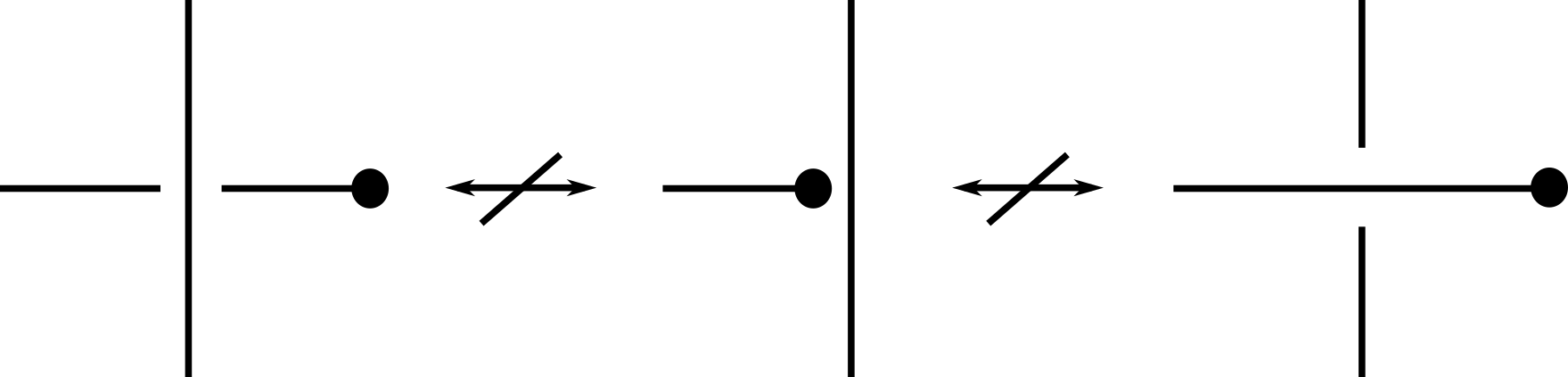}
\caption{A forbidden move involving a classical crossing and an open end.}\label{forbidden}
\end{figure}

\begin{figure}[ht!]

\centering
\includegraphics[width=4cm]{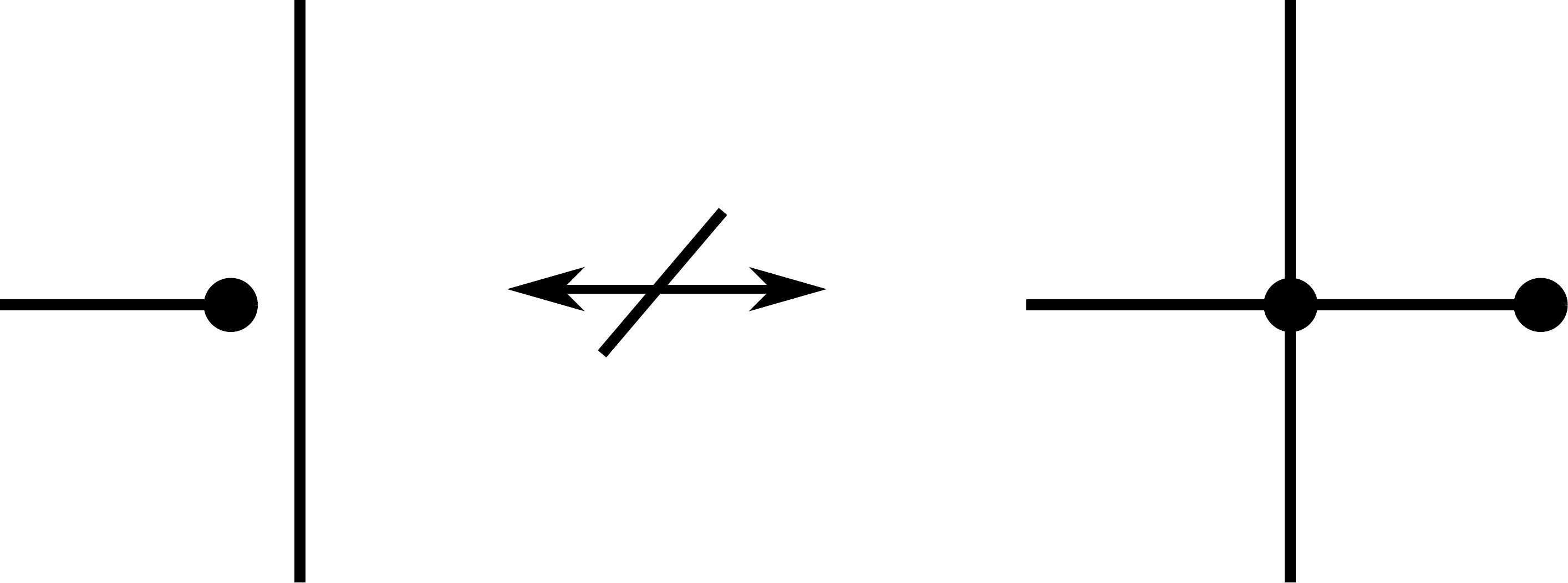}
\caption{A forbidden move involving a degree 4 vertex and an open end.}\label{leaftuck}
\end{figure}
\begin{figure}[ht!]
\centering
\includegraphics[width=3cm]{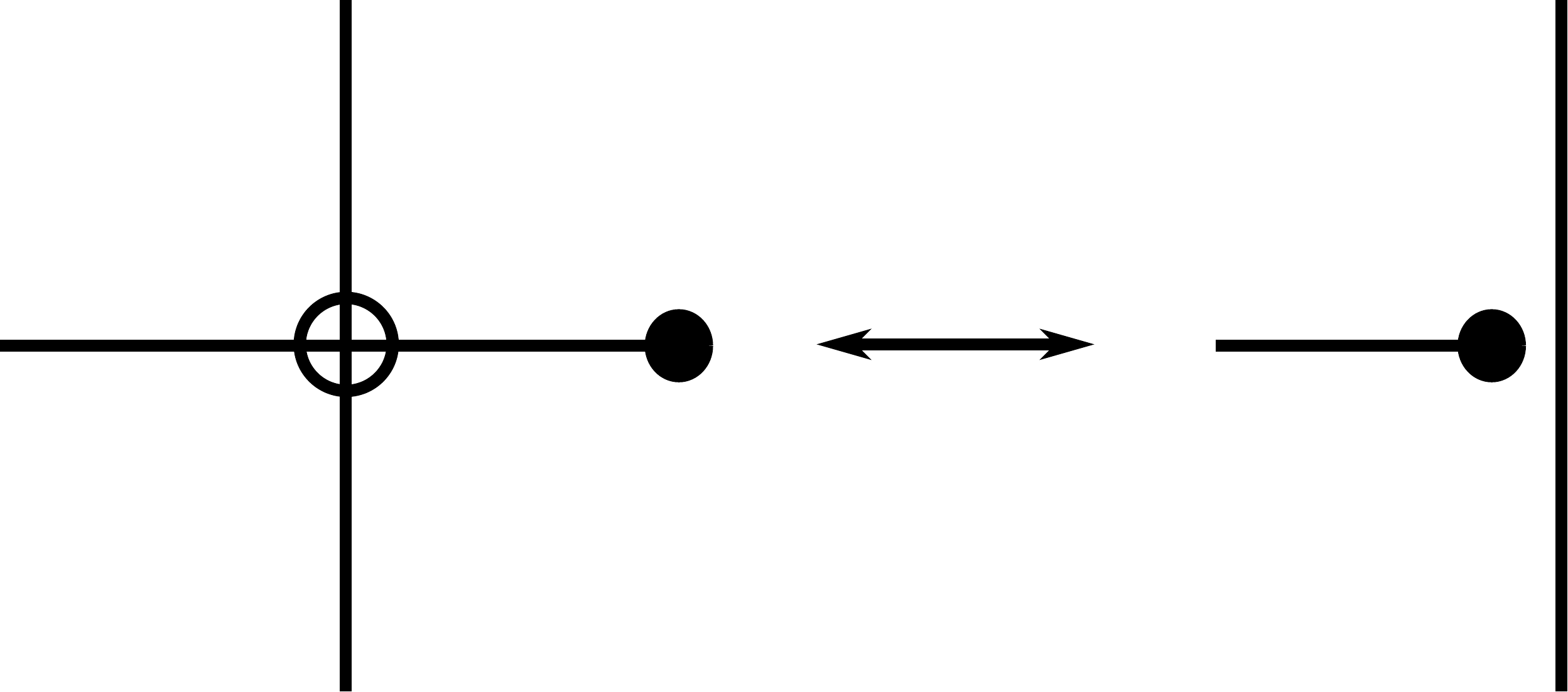}
\caption{A move involving a virtual crossing and an open end.}\label{fig:virtforbidden}
\end{figure}
\begin{figure}[ht!]

\centering
\includegraphics[width=4cm]{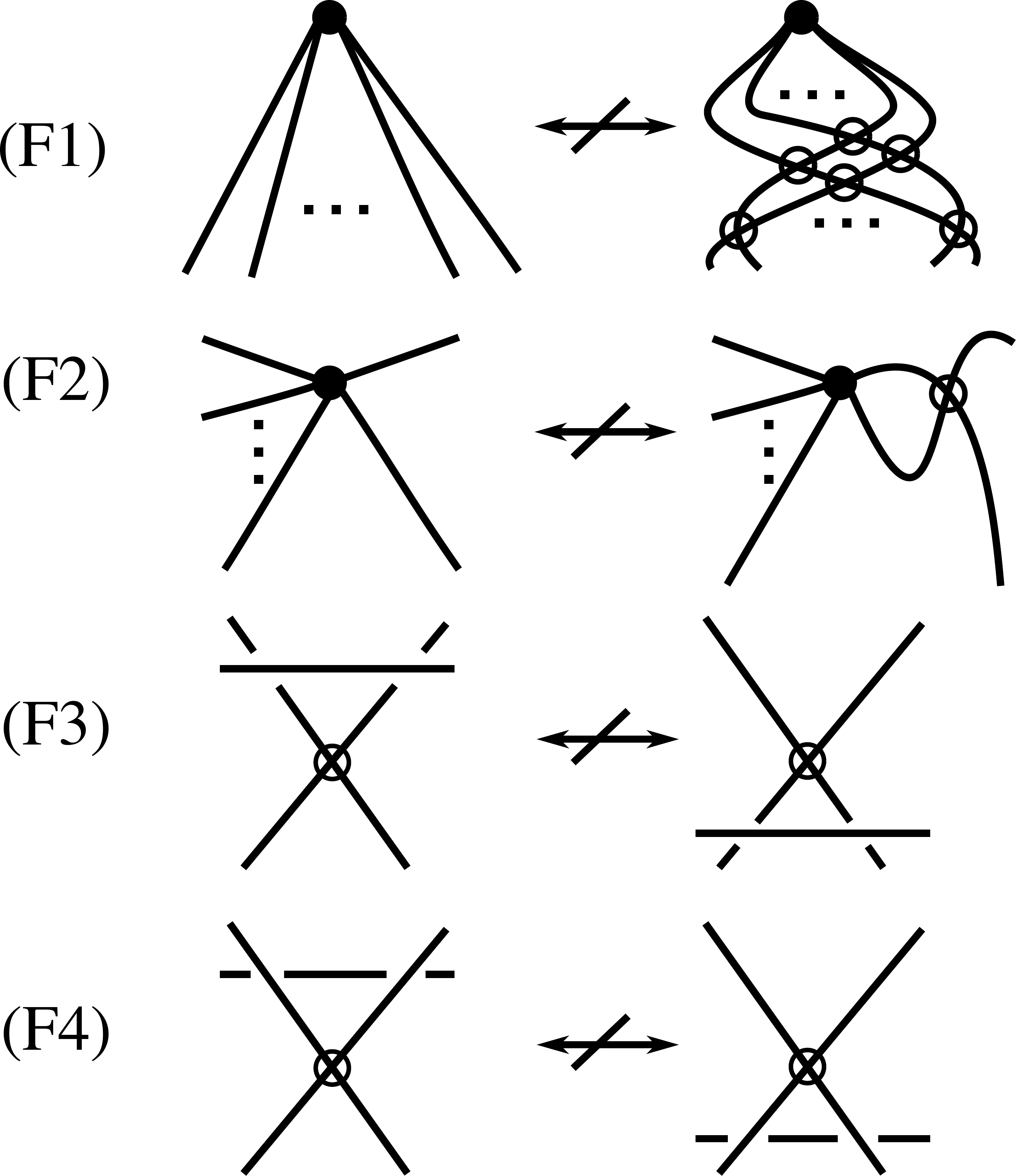}
\caption{A list of forbidden moves.}\label{fig:forbiddengraph}
\end{figure}

We now discuss a few ways of connecting up the head and the tail of our virtual graphoid diagram so that virtual spatial graph theory can be applied. 

\subsection{Classical graphoids and spatial graphs}
An \textit{overpass closure} (resp. an \textit{underpass closure}) of a classical graphoid $G$ is the classical spatial graph obtained by connecting the head to the tail with an embedded arc, called a \textit{shortcut}, that goes over (resp. under) each strand it meets during the connection. Obviously, any two shortcuts for $G$ are isotopic, which implies that there is a well-defined map $\omega_+$ (resp. $\omega_-$) from the set of classical graphoids to the set of spatial graphs in $\mathbb{R}^3$. However, one spatial graph $\Gamma$ can be the image of two non-equivalent classical graphoids. One may obtain explicit examples of this phenomenon by removing two distinct arcs from a diagram $\Gamma$ at different locations. Therefore, the map $\omega_+$ (resp. $\omega_-$) is surjective, but non-injective. Observe that the overpass closure and the underpass closure of a classical graphoid diagram may give rise to non-isotopic spatial graphs. When we consider classical graphoids, we will consistently focus on the underpass closure throughout the paper.  

As advertised, we demonstrate through the fundamental group computation that representing a spatial graph with a classical graphoid diagram may make calculations of spatial graph invariants simpler. Notice that we can perform the Wirtinger algorithm on a graphoid diagram in a natural way and we denote the resulting group by $\pi_1(G)$.

\begin{lem}
$\pi_1(G)$ is isomorphic to the fundamental group of $\mathbb{R}^3\backslash \omega_-(G)$.
\end{lem}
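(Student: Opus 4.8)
The plan is to show that the Wirtinger presentation read off from the graphoid diagram $G$ coincides, generator-for-generator and relation-for-relation, with the Wirtinger presentation of the underpass closure $\omega_-(G)$. First I would fix a diagram for $G$ and form $\omega_-(G)$ by adjoining a shortcut arc $\alpha$ from the head to the tail that passes under every strand it crosses. Since $\alpha$ only creates undercrossings, in the Wirtinger presentation of $\omega_-(G)$ the arc $\alpha$ is subdivided into sub-arcs by these new undercrossings, but each such undercrossing relation simply identifies one sub-arc of $\alpha$ with the next via conjugation by the meridian of the strand being passed under. I would first observe that all the sub-arcs of $\alpha$ therefore represent conjugates of a single generator $x$ (the meridian of the shortcut), and that, working from the tail end, each of these conjugating relations is redundant: it merely \emph{defines} the next sub-arc's generator in terms of the previous one and the ambient generators, so all generators coming from $\alpha$ can be Tietze-eliminated.

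The key steps, in order, are: (1) set up the Wirtinger presentation of $\omega_-(G)$ using the arcs of the original diagram of $G$ together with the sub-arcs of $\alpha$; (2) Tietze-eliminate every generator arising from a sub-arc of $\alpha$, using the undercrossing relations along $\alpha$, leaving a single new generator $x$ (say the meridian of the first sub-arc at the tail) and the ambient generators; (3) check that after this elimination the only surviving relations are exactly the crossing relations of the original diagram of $G$, now regarded as relations among the arcs of $G$; and (4) match this with the definition of $\pi_1(G)$ — the group obtained by running the Wirtinger algorithm directly on the graphoid diagram, treating the head and tail arcs as honest arcs with their own meridian generators — and conclude that the extra generator $x$ of $\omega_-(G)$ is identified with (a conjugate of) the meridian at the tail, with no extra relations, so the two presentations are Tietze-equivalent. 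One should also verify the graph-vertex relations (at vertices of degree $\ge 2$) are untouched by the closure, since $\alpha$ is attached only at the two degree-one vertices.

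The main obstacle I expect is bookkeeping at the endpoints: near the head and the tail, the shortcut $\alpha$ is glued to the two degree-one vertices of $G$, and one must make sure that the meridian generators assigned to the head arc and the tail arc of $G$ behave correctly under the gluing — in particular that joining the head arc to $\alpha$ to the tail arc does not impose a relation forcing the head and tail meridians to coincide in some unexpected conjugated form that is not already present in $\pi_1(G)$. Carefully choosing the basepoint near the tail and orienting $\alpha$ from tail to head should make the elimination in step (2) clean and make the identification in step (4) transparent. A secondary, purely cosmetic point is independence of the chosen shortcut: any two shortcuts are isotopic in the complement (as already noted in the text), so the resulting group does not depend on that choice, and it suffices to verify the statement for one convenient shortcut.
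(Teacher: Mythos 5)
Your steps (1)--(3) are sound and essentially reproduce, in algebraic form, what the paper does diagrammatically (the paper slides the endpoints via the forbidden move $\Omega_-$, checking by Tietze transformations that this preserves $\pi_1(G)$, until the shortcut can be taken disjoint from the diagram; you instead Tietze-eliminate the sub-arc generators of the shortcut directly --- these are the same computation). The problem is step (4). After eliminating the intermediate generators along $\alpha$, you are \emph{not} left with the presentation of $\pi_1(G)$ plus a harmless new generator: the first sub-arc of $\alpha$ is the continuation of the tail arc and the last sub-arc is the continuation of the head arc, so the elimination collapses to one genuinely new relation $g = W h W^{-1}$ identifying the head meridian with a conjugate of the tail meridian. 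This relation is present no matter how you choose the basepoint or orient $\alpha$; ``careful bookkeeping at the endpoints'' cannot make it disappear. The entire content of the lemma is that this one extra relation is already a consequence of the crossing relations of $G$, and you flag this as an expected obstacle but never overcome it.

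The paper supplies the missing argument geometrically: once the head and tail lie in the same region, $\pi_1(G')$ is the fundamental group of the complement of a graph in a ball $B$ meeting $\partial B$ in the two degree-one vertices, and the meridian loop at the head can be swept across the sphere $\partial B$ (the ``wrap around infinity'' move, i.e.\ through the boundary annulus $\partial B$ minus the two punctures) onto the meridian loop at the tail, so $g = h$ holds already and the added relation is redundant. Equivalently, one can invoke the classical fact that in a Wirtinger presentation of a connected diagram one relation is always a consequence of the others. Without some version of this step your argument only shows that $\pi_1(\mathbb{R}^3\backslash\omega_-(G))$ is a quotient of $\pi_1(G)$, not that the two groups are isomorphic.
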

\begin{proof}
Consider one of the distinguished degree-one vertex $v$, say the head of a graphoid. First, we claim that the forbidden move $\Omega_-$ that slides $v$ under another strand does not change $\pi_1(G)$. Suppose that $\langle x_1,\ldots,x_n | R \rangle$ is a presentation before the $\Omega_-$ move is performed. After the $\Omega_-$ move is performed, we get a presentation $\langle x_1,\ldots,x_n,y | R,y=x_j^{\pm 1}x_ix_j^{\mp 1} \rangle$, where $x_i$ and $x_j$ are generators associated to the strands involved in the $\Omega_-$ move. These two group presentations are equivalent by a Tietze transformation.

It follows that $\pi_1(G)$ is isomorphic to $\pi_1(G')$, where $G'$ is obtained from $G$ by performing a sequence of $\Omega_-$ moves until the head and the tail lie in the same region of the diagram. In other words, there is an arc $a$ connecting the head and tail of $G'$ such that $a$ is disjoint from the rest of the diagram. Let $g,h$ be the elements associated to the head and the tail of $G'$, respectively. Then, $\pi_1(\mathbb{R}^3\backslash\omega_-(G))$ has a presentation that is identical to $\pi_1(G')$, but with an additional relation $g=h$. To finish the proof, we show that $g=h$ in $\pi_1(G')$ already. Indeed, $\pi_1(G')$ is isomorphic to the fundamental group of the complement of a graph in a 3-ball $B$ where the graph intersects $\partial B$ in two degree-one vertices. A loop represents $g$ is equivalent to $h$ by wrapping around the sphere $\partial B$.

\end{proof}

\subsection{Virtual graphoids and the virtual closure}
We remark that when virtual crossings are present, the underpass closure is not well-defined. More precisely, one may get distinct virtual spatial graphs if one connects the head to the tail via a shortcut on distinct sides of a virtual crossing because the resulting virtual spatial graphs differ by a forbidden move (see Figure 20 in \cite{GK}).

However, we can turn to a different type of closure. A \textit{virtual closure} is the virtual spatial graph obtained by connecting the head to the tail of a virtual graphoid $G$ with an embedded arc $a$ such that each intersection of $a$ with $G$ is a virtual crossing. Any two ways of connecting up the open ends in this manner are equivalent due to the detour move. Therefore, the virtual closure map is well-defined, but there are examples showing that if we restrict to the set of classical knotoids the virtual closure map it is neither injective nor surjective \cite{GK,GK2}. However, if we extend to the map from the set of virtual graphoids to the set of virtual spatial graphs, then the virtual closure is surjective, but not injective (see Remark (3) in page 205 of \cite{GK}).

\section{Interpretations of Graphoids}\label{sec:TopologicalInterpretation}
In this section, we give geometric interpretations of topological graphoids. In Subsections \ref{subsection:rail}, \ref{subsection:Turaev}, and \ref{Subsection:Buck}, we deal with classical graphoids. Virtual graphoids are considered in Subsection \ref{subsection:virtual}.
\subsection{Planar classical graphoids}\label{subsection:rail}
Let $D$ be a classical graphoid diagram in the plane. Let $h\times \mathbb{R},t\times\mathbb{R}$ be two lines perpendicular to the plane containing $D$ passing through the head and the tail, respectively. Following \cite{GK}, two smooth oriented open ended graphs embedded in $\mathbb{R}^3$ with the endpoints attached to two distinguished lines are said to be \textit{line isotopic} if there is a smooth ambient isotopy of the pair ($\mathbb{R}^3\backslash\{t\times\mathbb{R},h\times\mathbb{R}\}, t\times\mathbb{R},h\times\mathbb{R})$, taking one graph to the other graph in the complement of the lines, taking endpoints to endpoints, and taking lines to lines; $t \times \mathbb{R}$ to $t \times \mathbb{R}$ and $h\times \mathbb{R}$ to $h\times\mathbb{R}$.
We extend a result in \cite{GK} to planar graphoids. Recall that combinatorial isotopy is generated by a \textit{triangle move}, which is defined as follows. To perform a triangle move, one finds a triangle embedded piecewise-linearly in 3-space with interior disjoint from the graph such that the triangle shares one or two edges with the graph. Then, one replaces the edge(s) of the triangle shared with the graph with the unshared edge(s).
\begin{figure}[ht!]

\centering
\includegraphics[width=5cm]{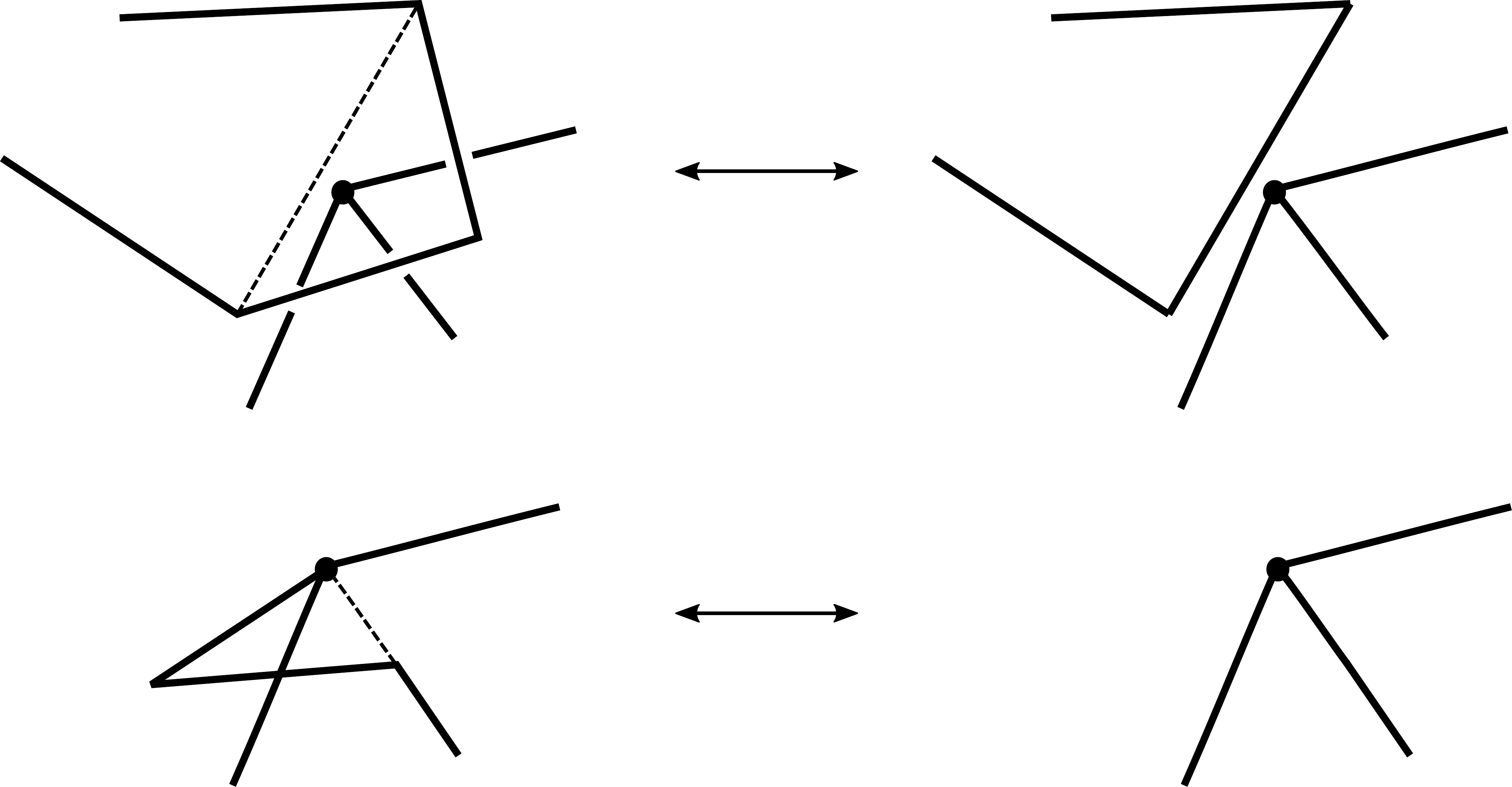}
\caption{Some instances of a triangle move.}\label{fig:plreidemeister}
\end{figure}

\begin{thm}
Let $\Gamma_1$ and $\Gamma_2$ be two smooth oriented spatial graphs with open ends in $\mathbb{R}^3$ that are generic with respect to the $xy$-plane. Then, $\Gamma_1$ and $\Gamma_2$ are line isotopic with respect to the lines passing through the endpoints if and only if their generic projections to the $xy$-plane along the lines are equivalent graphoid diagrams. \label{thm:lineisotopy}
\end{thm}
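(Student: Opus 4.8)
The plan is to mimic the standard proof that knot diagrams related by planar isotopy plus Reidemeister moves correspond exactly to ambient-isotopic knots, but carried out in the relative setting of the two vertical lines $h\times\mathbb{R}$ and $t\times\mathbb{R}$, and generalized from knots to spatial graphs. The statement has two directions. For the ``only if'' direction, I would start with a line isotopy $F_s$ between $\Gamma_1$ and $\Gamma_2$. After a small perturbation (keeping the endpoints on their lines and the isotopy disjoint from the two lines), I may assume the isotopy is piecewise-linear and passes through only finitely many non-generic projections, and that at each non-generic moment exactly one ``catastrophe'' of the projection occurs: a triple point, a tangency of two strands, a cusp, or a coincidence involving a vertex (a strand passing over/under a vertex, or two edge-ends at a vertex becoming tangent). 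Away from these finitely many moments the combinatorial type of the projected diagram is constant, and one must check that each catastrophe realizes one of the allowed graph Reidemeister moves of Figure~\ref{graphReid} (moves I--V), or planar isotopy. The key point specific to graphoids is that a catastrophe cannot involve the \emph{head} or the \emph{tail}: any strand passing across the head-line $h\times\mathbb{R}$ or tail-line $t\times\mathbb{R}$ is excluded because the isotopy is required to be disjoint from those lines, so the forbidden moves of Figures~\ref{forbidden} and~\ref{leaftuck} (sliding an open end under/through a strand) never arise. Hence the two generic projections are related by a finite sequence of the allowed moves, i.e., are equivalent graphoid diagrams.

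For the ``if'' direction, suppose the generic projections $D_1=\pi(\Gamma_1)$ and $D_2=\pi(\Gamma_2)$ are equivalent as graphoid diagrams, i.e., related by a finite sequence of moves I--V and planar isotopy. I would show that each such move and each planar isotopy of the diagram can be realized by a line isotopy of the spatial graph above the plane. A planar isotopy of $D_i$ fixing the head and tail (equivalently, fixing the two marked points $h,t$ in the plane) lifts to an ambient isotopy of $\mathbb{R}^3$ preserving the two vertical lines setwise; each Reidemeister move I--V is supported in a small disk in the plane disjoint from $h$ and $t$ (since the moves are applied away from the distinguished vertices), and the standard local model of that move as a combinatorial isotopy (a triangle move, cf. Figure~\ref{fig:plreidemeister}) can be performed inside a ball over that disk, again disjoint from both vertical lines. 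Composing these realizations gives a line isotopy from $\Gamma_1$ to $\Gamma_2$, after first using a line isotopy to bring $\Gamma_i$ into a position that projects to $D_i$ with small height (a ``flattening'' isotopy that clearly respects the lines).

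The main obstacle is the bookkeeping in the ``only if'' direction: one must enumerate the possible non-generic projection events for an \emph{open spatial graph}, not just a closed curve, and verify that each one is either a move of Figure~\ref{graphReid} or planar isotopy, \emph{and} that the constraint of avoiding the two lines exactly excludes the forbidden events. In particular one has to argue that the vertex-sliding catastrophe at a degree-$k$ vertex with $k\geq 2$ is the legitimate move (IV), whereas the same catastrophe at the degree-one head or tail is precisely what the disjointness-from-the-lines hypothesis forbids; this is the place where the definitions of Section~\ref{sec:closures} are doing real work. A secondary technical point is the perturbation/general-position argument guaranteeing that the isotopy can be taken piecewise-linear with only finitely many, mutually non-interacting catastrophes while staying off the two lines — standard transversality, but it must be stated carefully in the relative setting. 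I expect the rigid-vertex subtlety (move V versus move VI) to be a non-issue here since we are proving the statement for the full topological equivalence where both are allowed, but it is worth a remark that the same argument adapts to the rigid-vertex case by restricting the allowed vertex catastrophes.
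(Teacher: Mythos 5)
Your proof is correct, but it takes a genuinely different technical route from the paper's. The paper follows Kauffman's piecewise-linear argument from \cite{KGraph}: it replaces each $\Gamma_i$ by a PL graph, uses the fact that line-isotopic PL graphs differ by finitely many triangle moves performed in the complement of the two lines, and then projects each such triangle to the plane and enumerates how the projected triangle can overlap the rest of the diagram, each configuration being an instance of a move in Figure \ref{graphReid} (some cases are drawn in Figure \ref{fig:plreidemeister}, the rest left to the reader). You instead work smoothly, putting the one-parameter family of projections in general position and classifying the finitely many catastrophes (cusp, tangency, triple point, vertex events). Both are standard implementations of the same Reidemeister-type argument, and both bottom out in a case enumeration that is only partly written down. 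Two things your write-up does that the paper's does not: you treat the ``if'' direction explicitly, lifting each planar move to an isotopy supported in a ball over a disk missing $h$ and $t$; and you make explicit the key point that disjointness of the isotopy from the two lines is exactly what excludes the forbidden moves of Figures \ref{forbidden} and \ref{leaftuck}. In the paper's PL setting the corresponding observation is that a triangle disjoint from the line $h\times\mathbb{R}$ projects to a triangle not containing the point $h$, which the paper leaves implicit behind the remark that ``the existence of open ends does not alter the argument.'' The cost of your route is the relative transversality bookkeeping you flag at the end; the PL route trades that for the (also nontrivial) enumeration of triangle-overlap configurations. Your closing remark about move (V) versus move (VI) is consistent with the paper's conventions, since line isotopy is the full topological equivalence.
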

\begin{proof}
The proof is inspired by Theorem 2.1 of \cite{KGraph}, where we notice that the existence of open ends does not alter the argument. Indeed, we may treat $\Gamma_i$ as a piecewise-linear graph. In other words, $\Gamma_i$ is made up of a finite union of straight edges $[p_1,p_2],\cdots , [p_{n-1},p_n]$, where $p_1$ and $p_n$ are the head and the tail, respectively. It is well known that if $\Gamma_1$ and $\Gamma_2$ are isotopic, then they differ by a finite sequence of triangle moves. By subdividing an edge of a triangle as necessary, the result follows by projecting the triangle defining a triangle move to the plane. Such a triangle will be nonsingular and may contain edges and vertices coming from projecting the piecewise linear curve. One can then consider all the ways that a triangle overlaps with other strands and vertices in the plane and realize that each case is an instance of a Reidemeister move. Some cases of the projections are depicted in Figure \ref{fig:plreidemeister}. The remaining cases are left to the reader.
\end{proof}
\subsection{Classical graphoids on $S^2$ as spatial graphs with an unknot constituent}\label{subsection:Turaev}
A result due to Turaev \cite{Turaev} allows us to regard an oriented classical knotoid on a 2-sphere $S^2$ as an isotopy class of labelled $\theta$-graph in $S^3$ with a preferred unknotted constituent. For this paper, when we have a spatial graph with a preferred unknotted cycle as two edges meeting in two points $v_0$ and $v_1$, we label the edges $e_+$ and $e_-.$ We also label the subgraph $\Gamma \backslash e_+ \cup e_-$ as $\gamma_0$. By an \textit{isotopy of a labelled spatial graph}, we mean an ambient isotopy preserving the labels of these vertices and edges.

The following theorem is a natural generalization of Turaev's theorem to the setting of graphoids.
\begin{thm}\label{thm:turaevanalog}
The set of oriented classical graphoids on $S^2$ is in bijective correspondence with the set of isotopy classes of labelled spatial graphs with a preferred unknotted cycle.
\end{thm}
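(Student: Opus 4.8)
The plan is to mimic Turaev's original construction for knotoids, carrying the two distinguished degree-one vertices along the way. Given an oriented classical graphoid $G$ on $S^2$, first I would form a spatial graph in $S^3$ as follows. Thicken $S^2$ to $S^2 \times [-1,1] \subset S^3$ and push the underlying graph of a diagram of $G$ into the interior so that at each classical crossing the overstrand sits at a positive height and the understrand at a negative height, with the rest of the graph on $S^2 \times \{0\}$; this produces a spatial graph $\gamma_0$ with two free ends at the images of the head $h$ and tail $t$, both lying on $S^2 \times \{0\}$. Now cap off the two ends: join $h$ to a basepoint $v_0$ and $t$ to a basepoint $v_1$ by two disjoint arcs running monotonically through $S^2 \times \{+1/2\}$ (say), and then connect $v_0$ to $v_1$ by two further arcs $e_+$ and $e_-$ running respectively through the north cap $S^2 \times \{1\} \cup (\text{north pole region})$ and the south cap, arranged so that $e_+ \cup e_-$ bounds an obvious disk and is therefore an unknotted cycle. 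The result $\Gamma = \gamma_0 \cup e_+ \cup e_-$ is a spatial graph with a preferred labelled unknotted cycle; the labels on $v_0, v_1, e_+, e_-$ record which end of the graphoid was the head and which the tail and the ``sides'' of the shortcut, exactly as in the knotoid case.

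The second step is to check that this is well defined on equivalence classes: the Reidemeister moves of Figures \ref{graphReid} and \ref{fig:virtgraphReid} applied away from $h$ and $t$ lift to ambient isotopies of $\Gamma$ fixing $e_+ \cup e_-$ (classical moves are standard; there are no virtual crossings here), and an isotopy of $S^2$ extends to an isotopy of the thickened picture. Planar isotopy moving $h$ or $t$ around $S^2$ is absorbed because the capping arcs can be dragged along. Conversely, given a labelled spatial graph $\Gamma$ with preferred unknotted cycle $e_+ \cup e_-$, isotope $e_+ \cup e_-$ to a standard round circle, take the sphere $S^2$ that it bounds split into two hemispheres by that circle, and project $\gamma_0 = \Gamma \setminus (e_+ \cup e_-)$ into $S^2$; the two points where $\gamma_0$ met $v_0$ and $v_1$ become the tail and head, recorded by the labels, and deleting a small neighbourhood of $e_+ \cup e_-$ turns $\Gamma$ into a graphoid diagram on $S^2$. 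Independence of the choices of projection and of the isotopy taking $e_+ \cup e_-$ to the standard circle follows from the fact that any two such isotopies differ by something supported near the circle, which only produces graph Reidemeister moves and $S^2$-isotopies away from $h,t$ — i.e.\ graphoid equivalences. One then verifies the two constructions are mutually inverse, which is a direct diagram chase.

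The main obstacle, as in Turaev's argument, is the well-definedness of the inverse map: one must show that different choices of the disk spanned by the unknotted cycle $e_+\cup e_-$, and different isotopies normalizing it, yield the same graphoid rather than merely the same underlying graph. The subtlety is that the head and tail are degree-one vertices and the forbidden moves of Figures \ref{forbidden} and \ref{fig:virtforbidden} are \emph{not} allowed, so I must be careful that the normalization never drags a strand across an endpoint — equivalently, that the capping arcs from $h,t$ to $v_0,v_1$ are unique up to isotopy rel endpoints in the complement, which holds because those arcs lie in a collar of the capping disk and any two such are isotopic there. Once this rigidity of the capping region is established the rest is bookkeeping, and the labels are precisely what is needed to keep track of head versus tail and the two sides of the shortcut so that the bijection respects orientations.
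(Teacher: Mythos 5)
Your overall strategy (a Turaev-style correspondence: attach an over-arc $e_+$ and an under-arc $e_-$ forming an unknotted cycle, and invert by projecting) is the right one and matches the paper's, but two steps do not go through as written. First, your forward construction routes \emph{both} capping arcs from $h$ and $t$ to the new basepoints $v_0,v_1$ through $S^2\times\{+1/2\}$, i.e.\ over the whole diagram, and then sends $e_-$ through the south cap without specifying how it crosses the level $S^2\times\{0\}$ on the way down. The content of the construction is precisely that one arc of the preferred cycle passes entirely over the diagram and the other entirely under it, attached \emph{at the head and tail themselves}; the paper achieves this by choosing a shortcut arc $a\subset S^2$ from head to tail and taking $e_\pm$ to be its push-offs into the upper and lower half-spaces, with no auxiliary basepoints or arcs. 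With your routing it is not clear that $e_+\cup e_-$ is unknotted, nor that the construction retains enough information to be injective (an arc passing over everything is essentially the overpass closure, which the paper notes is non-injective). You also never address independence of the \emph{route} of the shortcut on $S^2$, which is a genuine choice in the forward direction.

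Second, and more seriously, the well-definedness of the inverse map is where the theorem actually lives, and your justification --- ``any two such isotopies differ by something supported near the circle'' --- is not true: two normalizations of the unknotted cycle can differ by dragging strands of $\gamma_0$ across the spanning disk, past the $e_+$ side or past the $e_-$ side, and these are a priori different operations on the resulting diagram (one slides the endpoint region over a strand, the other under it; both are forbidden graphoid moves, so it is not automatic that they produce equivalent graphoids). The paper's proof confronts this directly: after normalizing, the intersections of $\gamma_0$ with the cylinder over the shortcut arc are removed by sliding past $v_0$ or $v_1$ following either $e_+$ or $e_-$, and the two choices yield equivalent graphoid diagrams only because the diagram lives on $S^2$, where one can wrap the diagram around the sphere (Figure \ref{wraparound}). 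This is exactly the point at which the statement fails for planar graphoids, so any proof that never invokes the sphere at this step cannot be complete. Your ``rigidity of the capping region'' remark only concerns the auxiliary arcs you introduced and does not touch this issue.
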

\begin{proof}
We view $S^3$ as $\mathbb{R}^3 \cup \lbrace \infty \rbrace$. Given a graphoid $G$ on $S^2$ with endpoints $v_0$ and $v_1$. Pick an arc $a$ connecting $v_0$ and $v_1$ in $S^2$. Then, two push offs $e_+$ and $e_-$ of $a$ into the upper half space and into the lower half space, respectively, form an unknotted constituent of a spatial graph $\Gamma$. Reidemeister moves on a graphoid diagram are isotopies of $\Gamma$ and since the diagram of $G$ is on a 2-sphere, any two choices of such arc $a$ give rise to isotopic spatial graphs in $S^3.$

Let $\Gamma$ be a spatial graph in $S^3 = \mathbb{R}^3 \cup \lbrace \infty \rbrace$ with a preferred unknotted bigon $U = e_+ \cup_{(v_0,v_1)} e_-,$ where $e_+$ and $e_-$ project bijectively to an arc $a$ in the equatorial 2-sphere $\mathbb{R}^2 \times \lbrace 0 \rbrace$. The circle $U$ bounds a disk $D,$ and we may isotope such a disk so that $v_0 \cup v_1$ lie in $\mathbb{R}^2 \times \lbrace 0 \rbrace$, $e_-\subset \mathbb{R}^3_-$ and $e_+ \subset \mathbb{R}^3_+$. Assume that $\gamma_0 = \Gamma \backslash e_+ \cup e_-$ intersects $a\times \mathbb{R}$ transversely in finite number of points. Any such intersection points $\gamma_0 \cap a \times \mathbb{R}$ that lies outside of $D$ may be removed by sliding in the horizontal direction following $e_+$ or $e_-$ across $v_0\times \mathbb{R}$ or $v_1 \times \mathbb{R}.$ Since we are considering the graphoid diagram on a 2-sphere, sliding by following $e_+$ is equivalent to sliding by following $e_-$ due to the fact that one can wrap the graphoid diagram around the 2-sphere as shown in Figure \ref{wraparound}. This process gives a classical graphoid diagram in $S^2$ from $\Gamma$ in a well-defined manner.
\end{proof}

\begin{figure}[ht!]

\centering
\includegraphics[width=8cm]{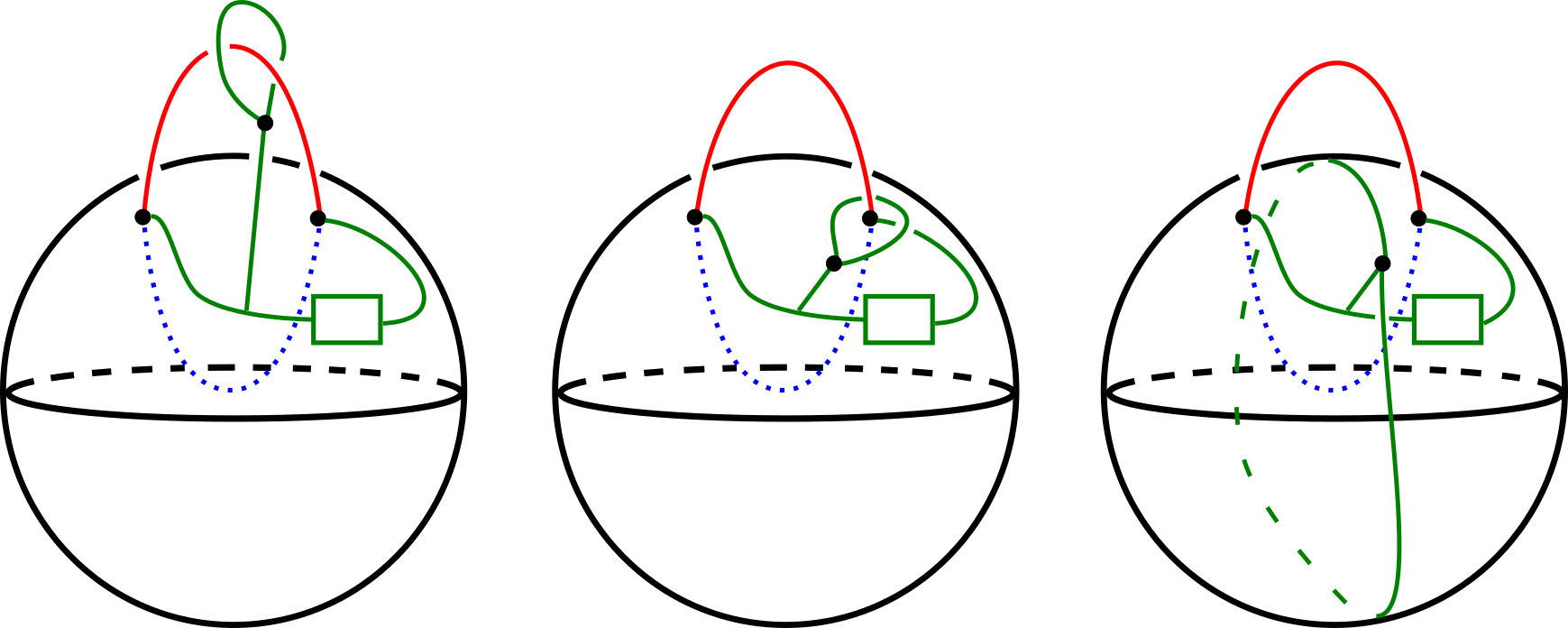}
\caption{The process of wrapping around the sphere mentioned in the proof of Theorem \ref{thm:turaevanalog}.}\label{wraparound}
\end{figure}

A graphoid embedded in $\mathbb{R}^3$ can be also extended to have two parallel lines passing through its endpoints and intersecting the graphoid only at the endpoint vertices. See Figure \ref{fig:longtrefoil} for an example.The resulting object is a \textit{long graphoid}(or \textit{rail graphoid}). The notion of \textit{rail arc}, that is an arc embedded in $\mathbb{R}^3$ with two lines passing through its endpoints,  was introduced in \cite{KodLam} and a long graphoid naturally extends this notion. The \textit{rail isotopy} extends to an isotopy of long graphoids that allows continuous deformations of long graphoids keeping the endpoints on the corresponding parallel lines. A long graphoid $\lambda$ can be projected to a plane that is parallel to the plane spanned by the pair of lines passing through its endpoints. In this way, we obtain the long graphoid diagram corresponding to $\lambda$. The extended Reidemeister moves given for rail knotoid diagrams in \cite{KodLam} can be defined on long graphoid diagrams, and they induce an equivalence relation for them with the graphoid Reidemeister moves (IV), (V), (VI) given in Figure \ref{graphReid}.

The following theorem is a direct generalization of the rail isotopy classes of rail arcs and equivalence classes of rail knotoids correspondence, appearing in \cite{KodLam}.

\begin{thm}
Two long graphoids are rail isotopic in $\mathbb{R}^3$ if and only if their long graphoid diagrams are equivalent.
\end{thm}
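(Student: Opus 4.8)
The plan is to follow the same piecewise-linear strategy used in the proof of Theorem~\ref{thm:lineisotopy}, but now carrying the two rail lines along through the argument. First I would reduce both long graphoids to piecewise-linear position: a long graphoid $\lambda$ is a PL arc in $\mathbb{R}^3$ together with two parallel lines $\ell_h,\ell_t$ meeting $\lambda$ only at its two endpoints, and I would choose the projection plane $P$ parallel to the plane spanned by $\ell_h$ and $\ell_t$, so that the two lines project to two distinguished points in $P$ and the endpoints of the graphoid diagram sit on those points. The ``only if'' direction is the routine one: a rail isotopy is generated by triangle moves performed in the complement of the two lines, so each elementary triangle move projects to $P$ to give either one of the classical spatial-graph Reidemeister moves (IV), (V), (VI) of Figure~\ref{graphReid}, or a move taking place in a neighborhood of an endpoint. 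Because the isotopy is required to keep each endpoint on its own line and to miss both lines in the interior, no triangle can sweep across $\ell_h$ or $\ell_t$; this is exactly what forbids the moves of Figures~\ref{forbidden} and~\ref{leaftuck}, so the induced equivalence on diagrams is precisely the long graphoid equivalence.

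For the ``if'' direction I would realize each extended Reidemeister move on a long graphoid diagram by an explicit rail isotopy of the PL long graphoid: lift the diagram to $\mathbb{R}^3$ by pushing the over-strand up and the under-strand down near each classical crossing, keeping everything in a slab $P\times[-\varepsilon,\varepsilon]$ except near crossings, and then each local diagram move is supported in a small ball disjoint from $\ell_h$ and $\ell_t$ (for moves (IV)–(VI)) or in a small ball meeting exactly one of the two lines in a single unknotted sub-arc near an endpoint, which can be straightened rel the line. Planar isotopy of the diagram lifts to rail isotopy in the slab, again missing the lines. Combining the two directions gives the stated equivalence.

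The main obstacle, as with Theorem~\ref{thm:lineisotopy}, is the case analysis: one must check that \emph{every} way a PL triangle can overlap the projected diagram—including triangles that share two edges with the graph, triangles that straddle a vertex of degree $k\ge 2$, and triangles near an endpoint that abut the rail line—projects to an allowed move and never to a forbidden one. The genuinely new point compared to the closed case is the interaction near the rail lines: I must verify that the ``no triangle sweeps past $\ell_h$ or $\ell_t$'' condition is equivalent, at the diagram level, to disallowing precisely the endpoint moves in Figures~\ref{forbidden} and~\ref{leaftuck}, and that the endpoint moves which \emph{are} allowed (sliding an endpoint within a region, small isotopies fixing the line) lift correctly. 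I expect this to be handled exactly as in \cite{KodLam} for rail arcs, with the degree-$k$ vertices contributing only the extra bookkeeping already present in the proof of Theorem~\ref{thm:lineisotopy}, so the remaining cases can reasonably be left to the reader.
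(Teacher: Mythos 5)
Your proposal is correct and follows essentially the same route as the paper, which simply states that the proof proceeds as in Theorem~\ref{thm:lineisotopy} (the PL triangle-move argument) and leaves the details to the reader. Your write-up is in fact more explicit than the paper's, spelling out the lift of the extended Reidemeister moves and the interaction of triangles with the rail lines in the spirit of \cite{KodLam}.
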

\begin{proof} 
The proof follows similarly with the proof of Theorem 3.1. The details are left to the reader.

\end{proof}

\subsection{Classical graphoids in $S^2$ as spatial graphs with order 2 symmetry}\label{Subsection:Buck}
Due to a result of Buck et al. \cite{Buck}, knotoids modulo \textit{rotation}, which is a reflection of $S^2$ containing the diagram about a line through the leg and the head followed by mirror reflection, corresponds to strongly invertible knots up to conjugacy. Here, recall that up to conjugacy means that two strongly invertible knots $(K_1, \tau_1)$ and $(K_2, \tau_2)$ are equivalent if there is an orientation preserving homeomorphism $f: S^3 \rightarrow S^3$ sending $K_1$ to $K_2$ satisfying $f\tau_1f^{-1} = \tau_2$. The following is the analogue for classical graphoids.

\begin{thm}\label{thm:buckanalog}
The set of graphoids on $S^2$ up to rotation is in bijective correspondence with the set of isotopy classes of strongly invertible spatial graphs up to conjugation.
\end{thm}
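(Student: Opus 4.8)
The plan is to mimic the strategy used by Buck et al.\ for knotoids, adapting it to spatial graphs, and to exploit the correspondence already established in Theorem~\ref{thm:turaevanalog}. Given a graphoid $G$ on $S^2$ with endpoints $v_0,v_1$, first construct as in Theorem~\ref{thm:turaevanalog} the labelled spatial graph $\Gamma$ with a preferred unknotted cycle $U=e_+\cup_{(v_0,v_1)}e_-$. The key observation is that the reflection of $S^2$ in an arc $a$ through $v_0$ and $v_1$, composed with mirror reflection, extends to an orientation-preserving involution $\tau$ of $S^3=\mathbb{R}^3\cup\{\infty\}$: concretely, take $a$ to lie in the equatorial $2$-sphere and rotate $S^3$ by $\pi$ about the circle $U$ (equivalently, about the axis through $v_0,v_1$ together with the point at infinity). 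This $\tau$ fixes $U$ setwise, exchanges the two hemispheres $\mathbb{R}^3_\pm$, and the "rotation" move on the graphoid diagram is precisely what records the $\tau$-invariance of the constituent $\gamma_0=\Gamma\setminus(e_+\cup e_-)$. So from a graphoid we obtain a strongly invertible spatial graph $(\Gamma,\tau)$, well defined up to the isotopies allowed in Theorem~\ref{thm:turaevanalog} together with the rotation move, i.e.\ up to conjugacy of $(\Gamma,\tau)$.

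For the reverse direction, I would start with a strongly invertible spatial graph $(\Gamma,\tau)$ containing a preferred unknotted cycle $U$ on which $\tau$ restricts appropriately, put $\tau$ into a standard linear form by the equivariant/Smith-theory-type normalization used in the knot case (the involution of $S^3$ with one-dimensional fixed-point set is conjugate to a standard rotation, by work on strong inversions), arrange $U$ to be the standard circle preserved by this rotation with $v_0,v_1$ on the axis, and then project $\gamma_0$ to the quotient hemisphere. The image is a graphoid diagram on $S^2$ with head and tail at the images of $v_0,v_1$; different equivariant isotopies of $(\Gamma,\tau)$ and different choices in the normalization change the diagram only by Reidemeister moves away from the endpoints and by the rotation move. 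Checking that the two assignments are mutually inverse is then a matter of tracking how the quotient construction and the "double push-off" construction of Theorem~\ref{thm:turaevanalog} interact, which is essentially formal once the equivariant picture is set up.

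The main obstacle I anticipate is the equivariant normalization step: one must know that an orientation-preserving smooth involution $\tau$ of $S^3$ with nonempty (hence circle) fixed-point set, together with an invariant unknotted cycle positioned as a bigon, can be conjugated to the standard model \emph{compatibly with the labelling data} $(e_+,e_-,\gamma_0)$ — in particular that $\tau$ genuinely swaps $e_+$ and $e_-$ rather than fixing each, and that the fixed-point circle meets $U$ exactly in $\{v_0,v_1\}$. In the knotoid setting this is handled by the resolution of the Smith conjecture and standard facts about strong inversions; here I would invoke the same input, noting that the presence of the extra graph $\gamma_0$ in the complement of $U$ does not interfere because the normalization only concerns the pair $(S^3,U)$ and the symmetry. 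A secondary point to verify carefully is that the rotation move on diagrams corresponds exactly to conjugacy (not to a coarser or finer relation): this amounts to observing that an orientation-preserving homeomorphism of $S^3$ intertwining $\tau_1$ and $\tau_2$ and preserving the unknotted-cycle data descends to a diagram-level equivalence that is generated by Reidemeister moves plus rotation, which follows by combining Theorem~\ref{thm:turaevanalog} with the explicit form of $\tau$. I would present the construction in both directions, state the naturality/well-definedness claims, and leave the routine diagram-chasing verifications to the reader.
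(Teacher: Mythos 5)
There is a genuine gap in your forward construction, and it is the central idea of the proof. You build the labelled spatial graph $\Gamma=\gamma_0\cup e_+\cup e_-$ with preferred unknotted cycle $U$ as in Theorem~\ref{thm:turaevanalog}, and then assert that the $\pi$-rotation $\tau$ of $S^3$ about the axis through $v_0,v_1$ (and $\infty$) makes $(\Gamma,\tau)$ a strongly invertible spatial graph. But $\tau$ does not preserve $\Gamma$: it swaps $e_+$ and $e_-$ and fixes the axis, yet it carries $\gamma_0\subset S^2$ to its reflection about the line through $v_0,v_1$, which is a different subset of $S^2$ for any graphoid diagram that is not literally symmetric about that line. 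The sentence claiming that the rotation \emph{move} ``records the $\tau$-invariance of $\gamma_0$'' conflates an equivalence relation on diagrams (a diagram being identified with its reflected, crossing-switched image) with an actual setwise symmetry of a single embedded graph; the latter is what a strong inversion requires, and a generic graphoid has no such symmetric representative. The missing ingredient is the passage to the \emph{double branched cover} $f\colon S^3\to S^3$ branched along $U$: the strongly invertible graph is not $\Gamma$ but $f^{-1}(\gamma_0)$, which consists of two copies of $\gamma_0$ joined at the points over $v_0$ and $v_1$ and is tautologically invariant under the deck transformation. This is how the paper (following Buck et al.\ in the knotoid case) manufactures an equivariant object out of a non-symmetric $\gamma_0$; without it the forward map is simply not defined.

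The reverse direction has a related defect: a strongly invertible spatial graph $(\Gamma,\tau)$ does not come equipped with ``a preferred unknotted cycle $U$ on which $\tau$ restricts appropriately'' as a subgraph — that cycle only appears downstairs, as the branch locus of the quotient map $S^3\to S^3/\tau$. The correct procedure is to pass to the quotient, invoke the Smith conjecture to see that the image of $\mathrm{Fix}(\tau)$ is an unknot, and take the image of $\Gamma$ together with this branch circle as the labelled spatial graph with preferred unknotted constituent; the four choices of labelling $e_\pm, v_0, v_1$ account exactly for the rotation ambiguity. Your instinct to use Smith-conjecture normalization of the involution and to match the rotation move with conjugacy is sound, and those pieces do appear in the paper's argument, but they cannot rescue a construction whose underlying object lives in the wrong space. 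Replace the ``$\Gamma$ is $\tau$-invariant'' step with the branched-cover/quotient pair and the rest of your outline goes through.
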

\begin{proof}
Throughout the proof, we let $t$ denote a bijection between the set of graphoids $G$ and the set of spatial graphs $\Gamma$ with a preferred unknotted constituent $U$, we see that there is a 2-fold cover $f:S^3\rightarrow S^3$ branched along $U$, where $f^{-1}(G)$ is a spatial graph in $S^3$ admitting a strong inversion. Observe that if we reflect the sphere of the diagram $D$ of $G$ about a line through $v_0$ and $v_1,$ we get another graphoid diagram $D'$. Switching all the crossings of $D'$, we get a different graphoid diagram that maps to the same graph admitting a strong inversion corresponding to $G$. This gives a well-defined map from a set of graphoids up to rotation and a set of strongly invertible graphs.

Conversely, suppose that $\Gamma$ is a strongly invertible graph with associated involution $\tau$. Then $f:S^3 \rightarrow S^3/ \tau$ is a 2-fold branched covering. Furthermore, the branched set is the unknot since the Smith Conjecture holds. The branched set together with $f(\Gamma)$ is a spatial graph in $S^3$ containing the branched set as a preferred unknotted constituent. Depending on the four choices of labeling $e_-,e_+,v_0,$ and $v_1$, such a spatial graph may give rise to possibly non-unique, but rotationally equivalent graphoids.
\end{proof}

\begin{figure}[ht!]

\centering
\includegraphics[width=7cm]{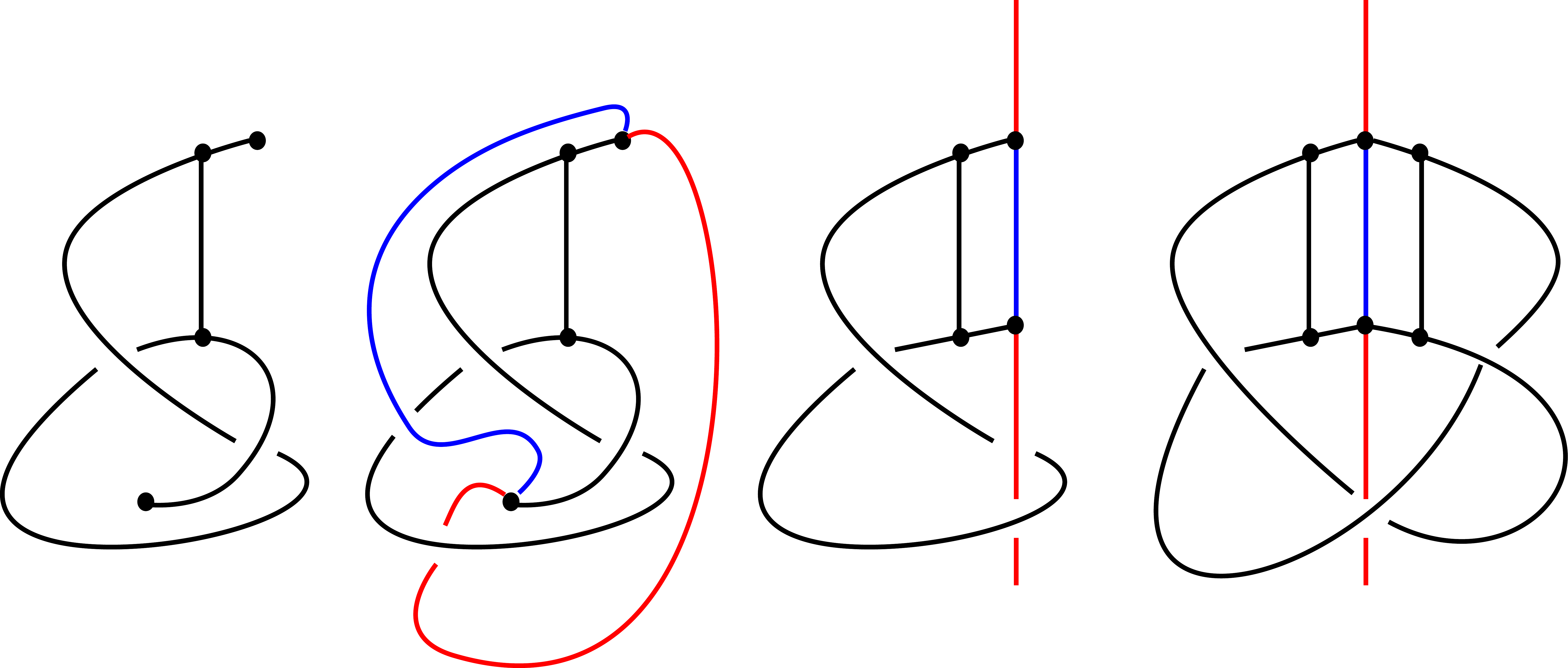}
\caption{From left to right, the first image is a classical graphoid, and the second image is the corresponding spatial graph with a preferred unknotted cycle. The third image is the same as the second image, but the graph is manipulated so that the symmetry is present. In the fourth image, the corresponding strongly invertible spatial graph via the correspondence in Theorem \ref{thm:buckanalog}.}\label{branched}
\end{figure}

The double branched cover method was also used effectively to distinguish planar knotoids. The idea extends in a straightforward manner to classical planar graphoids. More specifically, one can think of a planar graphoid as embedded in $D^2\times I$. Cut $D^2\times I$ along two disks, take two copies, and glue them together. Via this procedure, one obtains a spatial graph in a solid torus as the lift of the planar graphoid, whose isotopy class is an invariant of the graphoid. 
\begin{exmp}
    Consider the graphoids $G_1$ and $G_2$ in Figure \ref{fig:branchedcut}, which is inspired by Figure 7.1 of \cite{Buck}. The associated spatial graphs in solid torus are not equivalent because $G_1$ contains a subknot $K_1$ that is shown in \cite{Buck} to be inequivalent to a subknot $K_2\subset G_2$.
\end{exmp}
\begin{figure}[ht!]

\centering
\includegraphics[width=7cm]{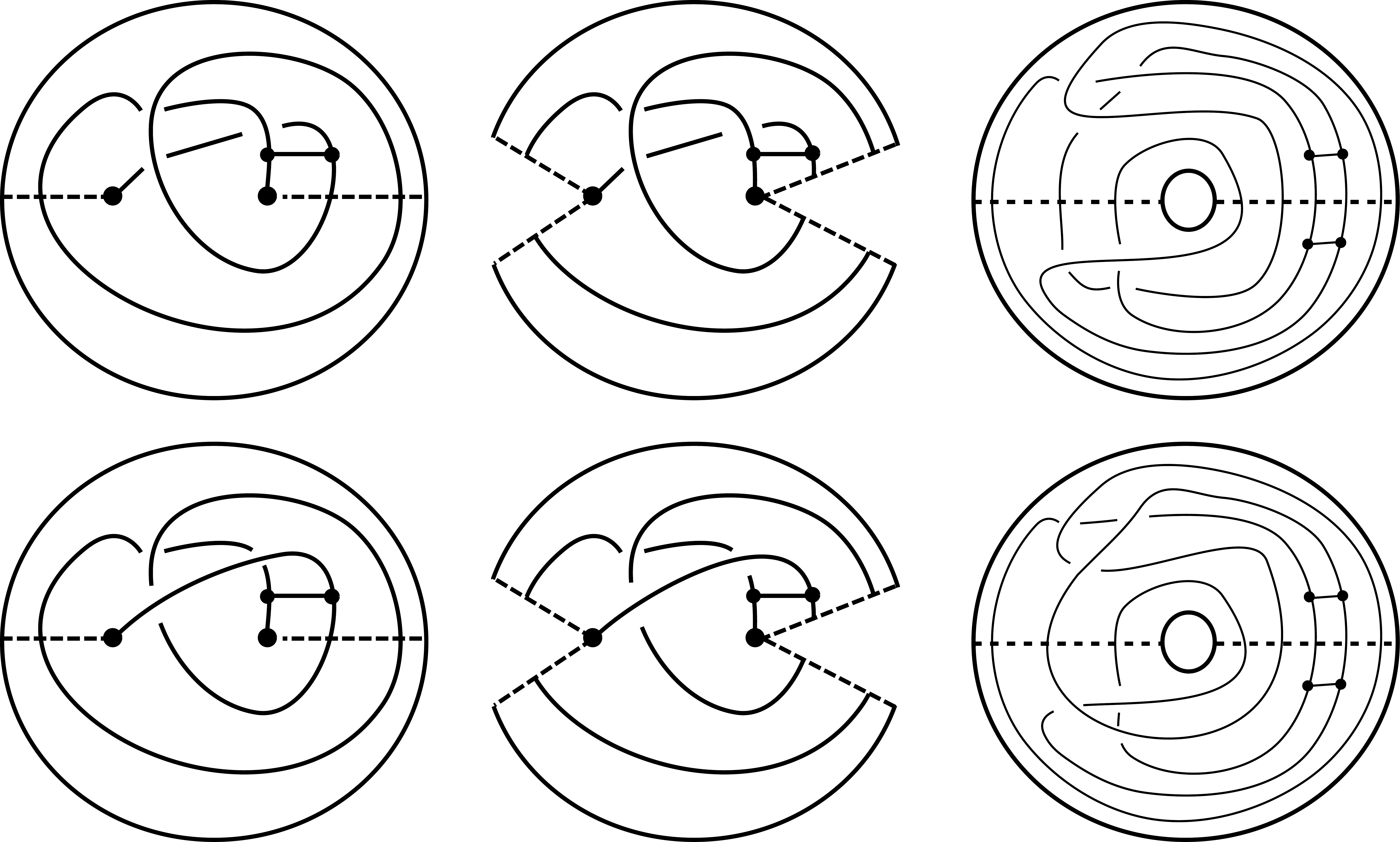}
\caption{Top row: A planar classical graphoid $G_1$. Bottom row: A planar classical graphoid $G_2$.}\label{fig:branchedcut}
\end{figure}

For other existing applications of branched covering technique to spatial graphs' chirality, the readers can consult \cite{Simon}.
\subsection{Virtual graphoids and graphoids in thickened surfaces}\label{subsection:virtual}
It has been established that a virtual spatial graph can be interpreted as a spatial graph in higher
genus surfaces considered up to Reidemeister moves in the surfaces, isotopy of the surfaces and addition and removal of
handles in the complement of spatial graph diagrams. We seek to show a similar statement for virtual graphoids. 

An \textit{abstract graphoid diagram} is a pair $(F,G)$ where $G$ is a graphoid contained in a surface $F$ which is a ribbon neighborhood of $G$ constructed as follows. Each classical crossing lies on a 2-disk. Each vertex also lies on a 2-disk. These 2-disks are then connected by ribbon bands following the strands on the graphoid diagram. At a virtual crossing, one ribbon band passes behind another (see Figure \ref{fig:projectvirtual}). We can then define the abstract Reidemeister moves to be the ribbon versions of the generalized Reidemeister moves for virtual spatial graphs. See Figure \ref{fig:abstractReid} for the abstract Reidemeister A4 move. Note that these abstract Reidemeister moves may alter the genus of $F.$

\begin{figure}[ht!]

\centering
\includegraphics[width=5cm]{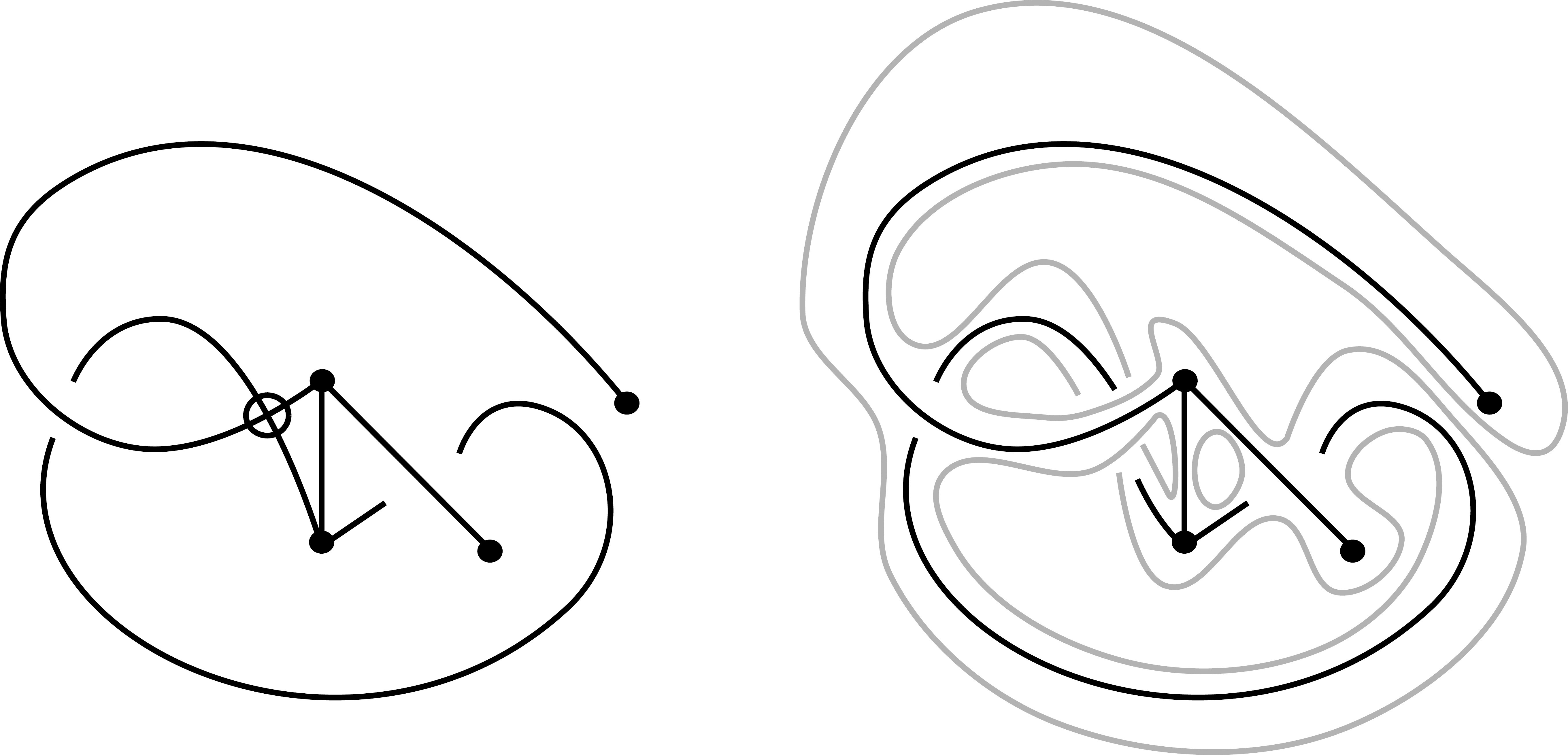}
\caption{A virtual graphoid diagram (left) and the corresponding abstract graphoid diagram (right).}\label{fig:projectvirtual}
\end{figure}

\begin{figure}[ht!]

\centering
\includegraphics[width=5cm]{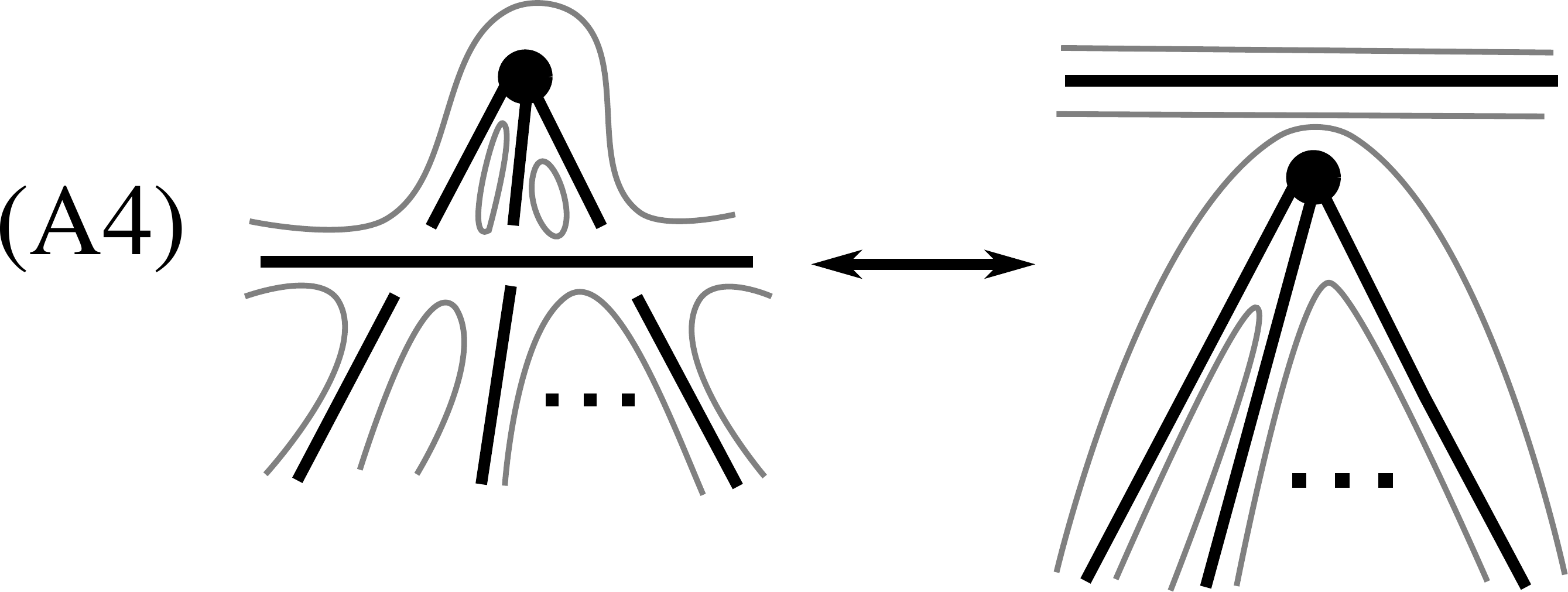}
\caption{An abstract Reidemeister move.}\label{fig:abstractReid}
\end{figure}
\begin{prop}\label{prop:virtualequiv}
There is a bijection between the set of virtual graphoids and the set of abstract graphoid diagrams up to abstract Reidemeister moves.
\end{prop}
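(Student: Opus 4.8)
The plan is to adapt the now-standard correspondence between virtual (spatial graph) diagrams and abstract diagrams on surfaces, due originally to Kamada--Kamada (and used extensively by Carter--Kamada--Saito, Kuperberg, and others), checking at each step that nothing breaks in the presence of the two distinguished degree-one vertices and that the forbidden moves are respected. First I would construct the map in one direction: given a virtual graphoid diagram $D$ in $S^2$, build the abstract diagram $(F,G)$ exactly as described before the statement --- a $2$-disk at each classical crossing and at each vertex (including the head and tail, which get a half-disk or a disk with a single band attached), joined by ribbon bands along the edges, with the over/under band information at each virtual crossing realized by letting one band pass behind the other. This $F$ is a canonical ribbon neighborhood, and $G$ sits in it as a deformation retract, so $(F,G)$ is a well-defined abstract graphoid diagram depending only on $D$.

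Next I would check that this assignment descends to equivalence classes. Each generalized Reidemeister move for virtual graphoids (the moves of Figures \ref{graphReid} and \ref{fig:virtgraphReid}, performed away from the head and tail) is supported in a disk, and replacing the local picture changes the ribbon neighborhood by an isotopy together with possibly a change of genus; this is precisely an abstract Reidemeister move (e.g.\ Figure \ref{fig:abstractReid} for the A4 move). The detour move is handled the same way it is for virtual knots: a string of virtual crossings corresponds to a band passing behind several others, and rerouting it is an isotopy of $F$ plus handle stabilization/destabilization in the complement. Crucially, because all these local moves in the definition of a virtual graphoid are applied away from the two degree-one vertices, the half-disks carrying the head and tail are never involved, so there is no issue with the forbidden moves of Figures \ref{forbidden}, \ref{leaftuck}, \ref{fig:forbiddengraph}. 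Hence we get a well-defined map from virtual graphoids to abstract graphoid diagrams modulo abstract Reidemeister moves.

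For the inverse direction, given an abstract graphoid diagram $(F,G)$, I would embed $F$ in a closed (or once-punctured, after collapsing the boundary of the half-disks at the head and tail appropriately --- I would phrase this carefully so the degree-one vertices sit on $\partial$ or are simply free endpoints in the interior) orientable surface $\Sigma$, which is possible and unique up to stabilization in $\Sigma \setminus G$ by the classical theory of ribbon surfaces. Then I would project $(\Sigma, G)$ to $S^2$ generically, turning the genus into virtual crossings, obtaining a virtual graphoid diagram; again the head and tail survive as distinguished degree-one vertices and no forbidden move is needed, since the genus-reduction only introduces virtual crossings on edges. Two such projections of $(\Sigma,G)$, or of stabilization-equivalent surfaces, differ by detour moves and the virtual/classical Reidemeister moves away from the endpoints, so the resulting virtual graphoid is well defined. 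Finally I would verify that the two constructions are mutually inverse: starting from $D$, taking the ribbon neighborhood, embedding, and reprojecting returns a diagram related to $D$ by detour moves; and starting from $(F,G)$, reprojecting and retaking the ribbon neighborhood returns $(F,G)$ up to abstract Reidemeister moves.

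The main obstacle I anticipate is purely bookkeeping: making sure the half-disk (or open-end) convention at the head and tail is set up so that (a) it behaves functorially under all the moves and (b) it correctly \emph{excludes} the forbidden moves, i.e.\ one must confirm that there is no abstract Reidemeister move whose planar shadow would be a forbidden endpoint-slide. This amounts to observing that the abstract moves are, by definition, the ribbon versions of the \emph{allowed} generalized Reidemeister moves only, so the correspondence is forced to match forbidden-move-free equivalence on both sides; still, stating this cleanly requires care. Everything else --- the local-move analysis and the ribbon-surface uniqueness up to stabilization --- is routine and parallels the virtual spatial graph case, so I would cite that case and indicate the (minor) modifications rather than reproving it in full.
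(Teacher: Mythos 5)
Your proposal is correct and follows essentially the same route as the paper: build the ribbon neighborhood to get the forward map, check that each generalized Reidemeister move corresponds to its abstract (ribbon) version, and invert by embedding the abstract diagram with its crossing- and vertex-disks positioned for projection to $S^2$, decorating band-over-band double points as virtual crossings. The extra care you take with the head/tail disks and the forbidden moves is a reasonable elaboration of details the paper leaves implicit, not a different argument.
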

\begin{proof}
Take a graphoid $G$. Then, one can build an abstract graph diagram $(F,G)$ following the recipe as discussed. Now, if $G$ and $G'$ differ by a sequence of generalized Reidemeister moves $\Omega_1,\cdots,\Omega_k$, then one can obtain from $(F,G)$ an abstract graph diagram $(F',G')$ via the ribbon versions of $\Omega_1,\cdots,\Omega_k$. Thus, we have a well-defined map from the set of virtual graphoids to the set of abstract graphoid diagrams up to abstract Reidemeister moves.

To see that there is a well-defined inverse for the map constructed in the previous paragraph, we take an abstract graph diagram $(F,G)$. Such a diagram can projected to $S^2$ as follows. First, embed $(F,G)$ in $S^3$ in such a way that the 2-disks corresponding to the classical crossings and vertices of the graphoid lie in $S^2\subset S^3.$ Each instance where two transverse ribbon bands are projected to create a transverse double point in the diagram, we decorate that with a virtual crossing. The proposition follows from noticing that this projection map followed by graphoid Reidemeister moves yield a graphoid diagram that is equivalent to the one coming from performing abstract Reidemeister moves followed by the projection.
\end{proof}

Proposition \ref{prop:virtualequiv} allows us to deduce the following theorem. If two graphoid diagrams $G_1$ and $G_2$ in closed orientable higher
genus surfaces $\Sigma_1$ and $\Sigma_2$ are related by a finite sequence of classical Reidemeister moves in the surfaces, isotopy of the surfaces and addition/removal of handles in the complement of the graphoid diagrams, we say that $(\Sigma_1,G_1)$ is \textit{stably equivalent} to $(\Sigma_2,G_2)$.
\begin{thm}
Virtual graphoid theory is equivalent to the theory of graphoids in thickened surfaces up to stable equivalence.
\end{thm}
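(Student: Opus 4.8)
The plan is to deduce the theorem from Proposition \ref{prop:virtualequiv} together with the standard handle-stabilization dictionary for abstract link diagrams, adapting the Carter--Kamada--Saito correspondence to the graphoid setting. First I would observe that an abstract graphoid diagram $(F,G)$, where $F$ is the ribbon neighborhood of $G$, can always be completed to a pair $(\Sigma,G)$ with $\Sigma$ a closed orientable surface: simply glue disks (possibly after first thickening $F$ to an honest compact surface with boundary) onto the boundary circles of $F$. This gives a map from abstract graphoid diagrams to pairs $(\Sigma,G)$ with $\Sigma$ closed and orientable. Conversely, given such a $(\Sigma,G)$, a regular neighborhood of $G$ in $\Sigma$ is a ribbon surface carrying $G$, so the data of $(\Sigma,G)$ up to handle addition/removal in the complement of $G$ is recovered. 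The key point is that adding or removing a handle in the complement of the graphoid diagram corresponds exactly to the non-uniqueness of the disk-filling, and that any two fillings differ by such handle moves; this is the content of the classical argument for virtual links and it is insensitive to the presence of degree-one vertices (the head and tail) since those lie on 2-disks just like ordinary vertices and crossings, and no Reidemeister move is ever applied at them.

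Next I would match the moves on the two sides. By Proposition \ref{prop:virtualequiv}, virtual graphoids correspond bijectively to abstract graphoid diagrams modulo abstract Reidemeister moves, where the latter are the ribbon versions of the generalized Reidemeister moves of Figures \ref{graphReid} and \ref{fig:virtgraphReid} (away from the head and tail). Under the disk-filling map, each abstract Reidemeister move becomes a genuine classical Reidemeister move performed inside the closed surface $\Sigma$, up to isotopy of $\Sigma$; conversely a classical Reidemeister move in $\Sigma$ can be supported in a neighborhood of $G$ and hence is an abstract Reidemeister move. An abstract Reidemeister move that changes the genus of the ribbon surface $F$ translates, after filling, into a move in a surface of the appropriate genus together with a stabilization/destabilization, so it is absorbed into stable equivalence. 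Thus the map descends to a well-defined map from virtual graphoids to $\{(\Sigma,G)\}$ modulo stable equivalence, and one checks it is a bijection by exhibiting the inverse (pass to a ribbon neighborhood, project to $S^2$ as in the proof of Proposition \ref{prop:virtualequiv}) and verifying the two composites are the identity on equivalence classes.

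The remaining point is well-definedness of the inverse: if $(\Sigma_1,G_1)$ and $(\Sigma_2,G_2)$ are stably equivalent, their associated virtual graphoid diagrams (obtained by the projection procedure) must be equivalent. Reidemeister moves in the surface and isotopy of the surface clearly produce equivalent virtual graphoid diagrams; the only content is that adding or removing a handle in the complement of $G_i$ does not change the virtual graphoid, which follows because a handle in the complement of $G_i$ does not meet the ribbon neighborhood of $G_i$ and so does not affect the projected diagram at all (the detour move, which the excerpt has declared to be allowed, handles the resulting rerouting of virtual crossings if one chooses a different projection).

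The main obstacle I anticipate is precisely this handle-uniqueness step, i.e. showing that any two ways of capping the ribbon surface $F$ to a closed surface, or any two closed-surface representatives of a given abstract graphoid diagram, differ by handle additions/removals in the complement of $G$. In the link case this is a known but nontrivial lemma; in the graphoid case one must additionally be careful that the capping disks can be chosen disjoint from the arcs running to the head and tail, and that the "wrapping around the sphere" phenomenon used elsewhere in the paper is not needed here since we work in higher genus. I expect the argument to go through verbatim from the virtual spatial graph literature once one notes that the head and tail are treated as ordinary vertices throughout; the bulk of the write-up will be bookkeeping rather than new ideas, so it is reasonable to present this proof at the level of "the proof follows the standard argument, with the degree-one vertices playing no special role."
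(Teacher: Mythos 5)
Your proposal takes essentially the same route as the paper's proof: both reduce the theorem to Proposition \ref{prop:virtualequiv}, define the correspondence by capping the boundary circles of the abstract ribbon surface with disks, recover the inverse via regular neighborhoods, and observe that genus-changing abstract Reidemeister moves are absorbed into stabilization while handle moves in the complement leave the abstract diagram untouched. The only difference is that you explicitly flag the handle-uniqueness of the capping as a step requiring care, whereas the paper passes over it silently; your version is, if anything, slightly more complete on that point.
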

\begin{proof}
We first show that there is a bijective map $f$ between the set of abstract graphoids up to abstract equivalence and the set of graphoids in thickened surfaces up to stable equivalence. The theorem then follows because of Proposition \ref{prop:virtualequiv}.

From an abstract graphoid diagram, one can get a graphoid diagram on a closed orientable surface by filling in the boundary of the abstract ribbon surface with disks. Now, we observe that some of the abstract Reidemeister moves alter the genus of the close surface supporting the graphoid diagram. Therefore, abstract Reidemeister moves will correspond to a classical Reidemeister moves on the supporting surface with possible addition and removal of handles in the complement of the diagram. This demonstrates that $f$ is well-defined.

To see that $f$ is surjective, take a graphoid diagram on a closed surface $(\Sigma,G).$ Then, $G$ has a regular neighborhood which is an abstract graphoid diagram. To see that $f$ is injective, suppose that $(\Sigma_1,G_1)$ is stably equivalent to $(\Sigma_2,G_2).$ The Reidemeister moves applied to $G_i$ induces equivalence of abstract graphoid diagrams by the corresponding abstract Reidemeister moves. Finally, since the addition/removal of handles is done away from $G_i$, these actions do not affect the corresponding abstract graphoid diagrams.
\end{proof}
\section{Invariants coming from constituent links/multi-knotoids}\label{section:replacement}

\subsection{Topological invariants induced by local replacements}
A \textit{constituent link} (resp. \textit{constituent multi-knotoid}) of a graphoid $G$, is a link (resp. a multi-knotoid) contained in $G$. In \cite{KGraph}, Kauffman showed that collections of constituent knots and links in a spatial graph is a topological invariant of the spatial graph. Using the same types of arguments, one can deduce that collections of knotoids and knots become a topological invariant of the graphoid.

To be more precise, one performs local replacements near a vertex as shown in Figure \ref{fig:ReidPermute}, where each local replacements connects two of the edges incident to the vertex and leaves the other edges as free ends. Each Reidemeister move permutes the local replacements, and hence the collection of constituents is an invariant of a topological graphoid.
\begin{figure}[ht!]

\centering
\includegraphics[width=7cm]{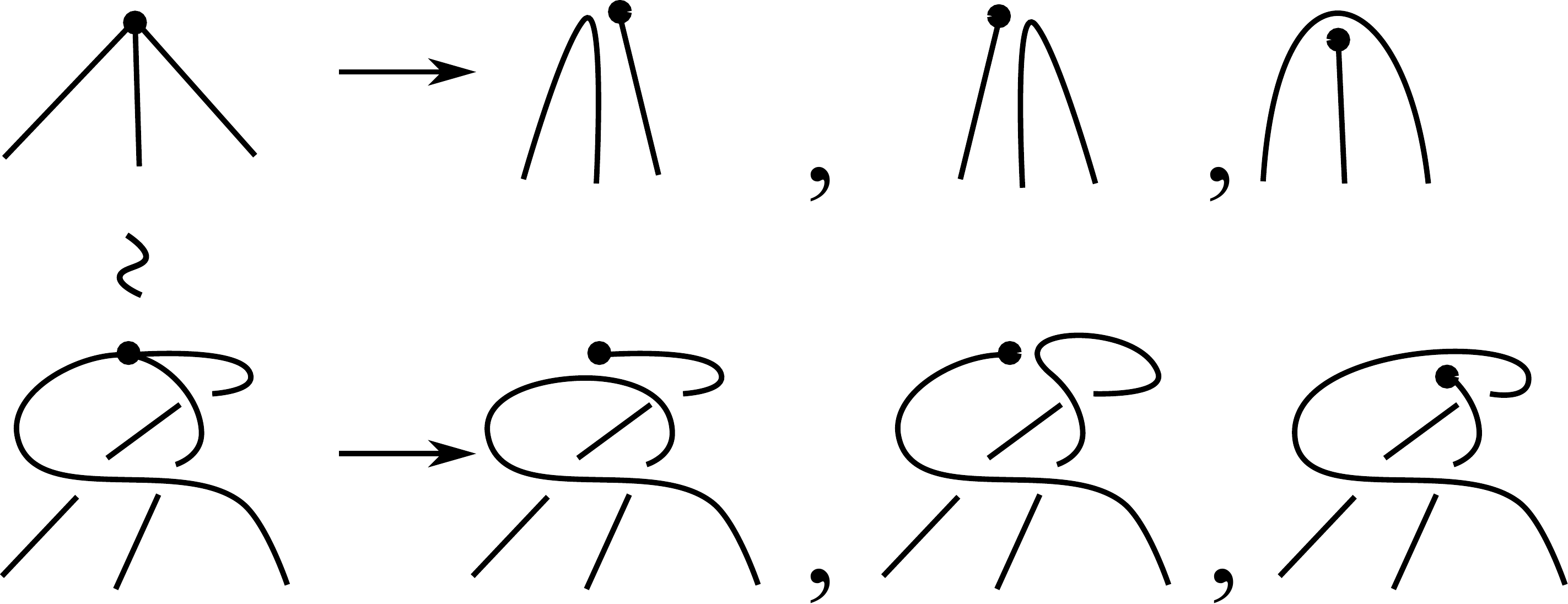}
\caption{Reidemeister move permutes the local replacements.}\label{fig:ReidPermute}
\end{figure}
\begin{figure}[ht!]

\centering
\includegraphics[width=5cm]{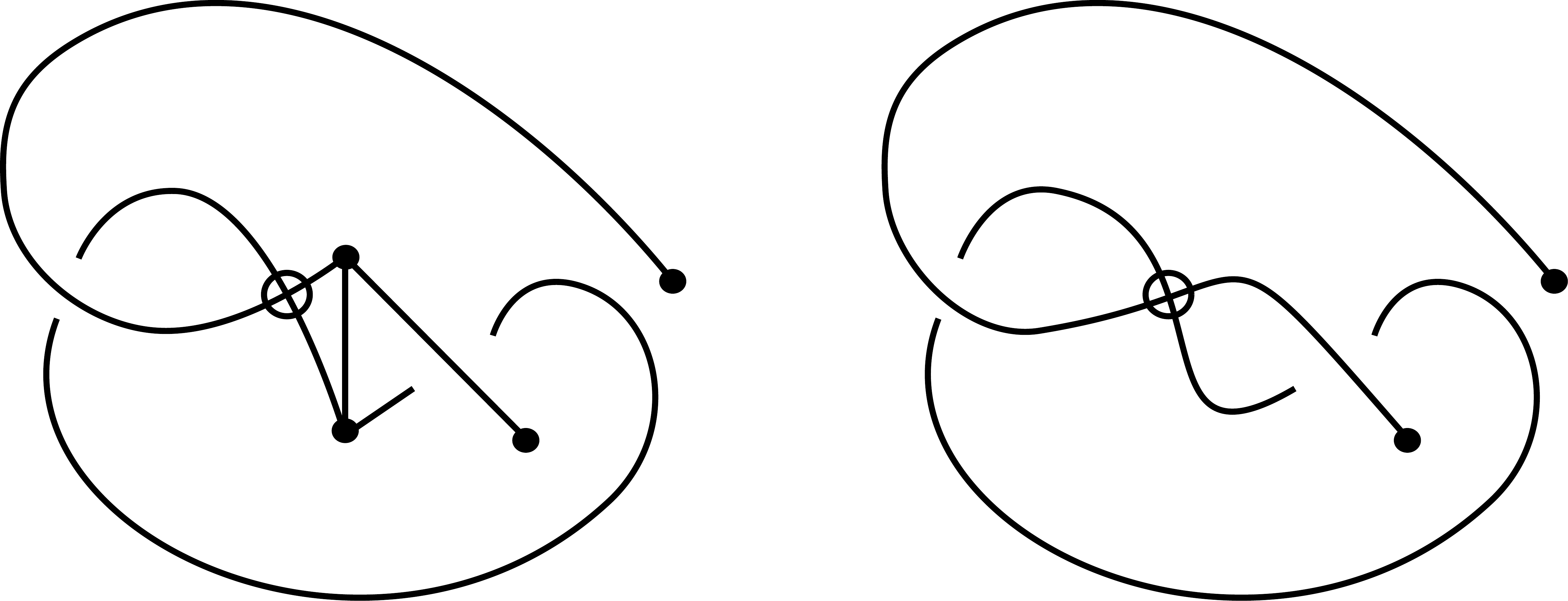}
\caption{The virtual graphoid on the left contains a virtual multi-knotoid whose virtual closure is non-classical.}\label{fig:constituentexample}
\end{figure}
\begin{exmp}
After performing the local replacements discussed in this section, we see that there is a constituent virtual multi-knotoid of height one contained in the virtual graphoid in Figure \ref{fig:constituentexample}.
\end{exmp}
\begin{exmp}
By Theorem \ref{thm:lineisotopy}, a planar classical graphoid $G$ corresponds  to a line isotopy class of a spatial graph whose open ends lie on the line $\lbrace h \rbrace\times \mathbb{R}$ and $\lbrace t \rbrace\times \mathbb{R}$. The spatial graphoid $G$ can be considered to include the two lines and so the resulting object is a long graphoid $\widetilde{G}$.

%The plane of $G$, $\mathbb{R}^2\times \lbrace 0 \rbrace$ divides $\lbrace h \rbrace\times \mathbb{R}$ into two half lines $\lbrace h \rbrace\times \mathbb{R}_+$ and $\lbrace h \rbrace\times \mathbb{R}_{-}$. A similar division happens for the line $\lbrace t \rbrace\times \mathbb{R}$.

One can perform local replacements at the vertices of $\widetilde{G}$ to retrieve constituents, as shown in Figure \ref{fig:longtrefoil}. We observe that one of the constituents is a long trefoil which proves that $G$ is non-trivial. This technique was also utilized in \cite{Graphoid}. %For instance, both the long trefoil and the long trefoil with one edge added in Figure \ref{fig:longtrefoil} can be used as invariants of the original planar graphoid in the left image.
\end{exmp}

\begin{figure}[ht!]

\centering
\includegraphics[width=7cm]{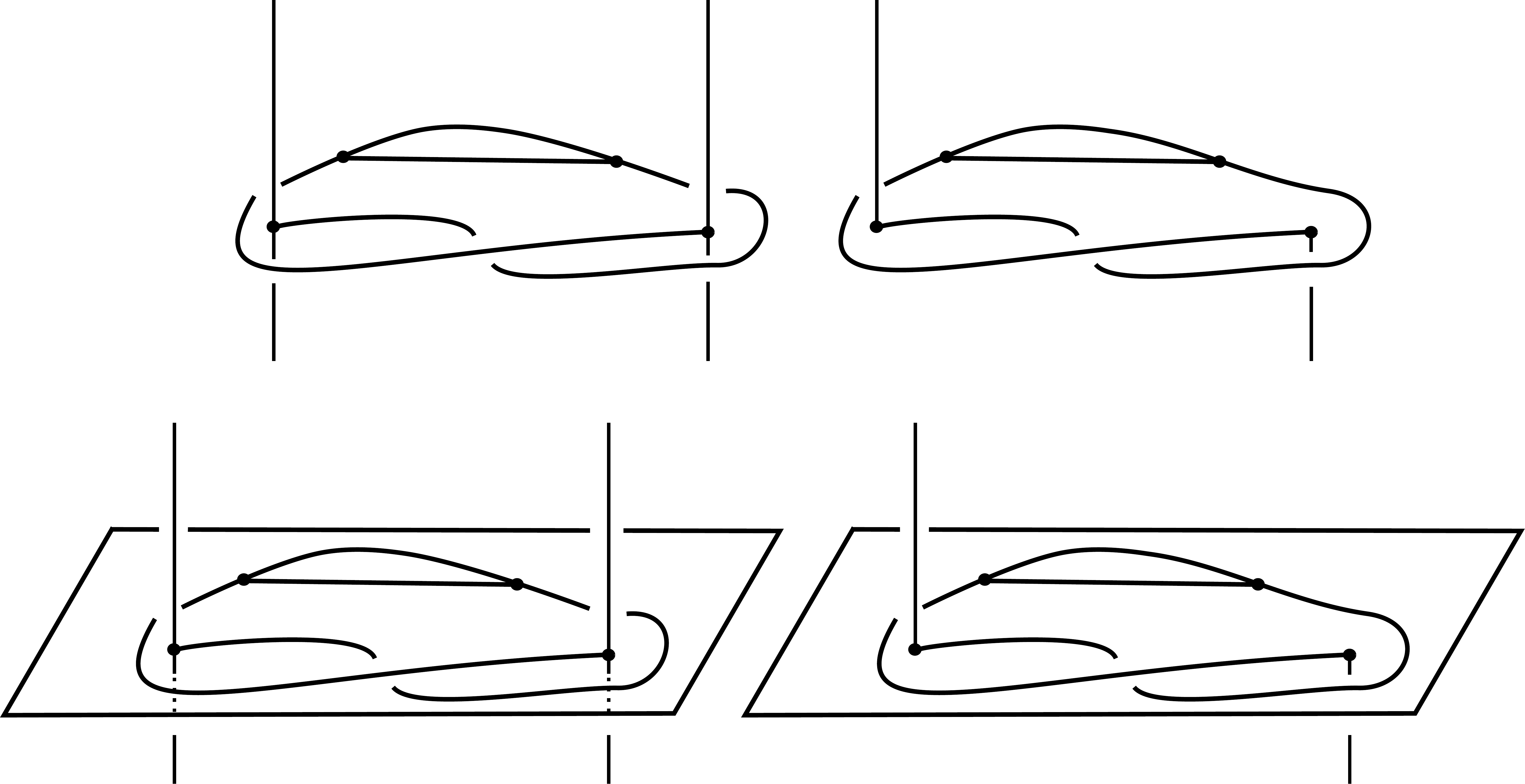}
\caption{A long spatial graph as a constituent knot of the union of a graphoid diagram with the two lines coming from the topological characterization of planar graphoids in terms of line isotopy.}\label{fig:longtrefoil}
\end{figure}

The constituent links also give some information on lower estimates of invariants. We say that two constituents \textit{overlap} if they share a common edge. Define the \textit{maximum constituent crossing number} $mcc(G)$ (resp. \textit{maximum constituent height} $mch(G)$) of the set $\kappa = \lbrace K_1,...,K_n \rbrace$ of constituents where no two constituents in the set overlap to be the sum of the crossing numbers (resp. sum of the heights) of the elements in $\kappa.$
\begin{prop}
Let $c(G)$ denote the crossing number of $G$ and $h(G)$ denote the height of $G$. Then, $mcc(G)\leq c(G)$ and $mch(G)\leq h(G)$.
\end{prop}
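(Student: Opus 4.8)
The plan is to prove the two inequalities in parallel, since they rest on the same combinatorial observation: each constituent in a non-overlapping family $\kappa = \{K_1,\dots,K_n\}$ is carried by a set of edges of $G$, and distinct constituents use disjoint edge sets. First I would fix a diagram $D$ of $G$ realizing the crossing number $c(G)$ (respectively, a diagram realizing the height $h(G)$ for the second inequality). Each constituent $K_i$ is obtained from $G$ by performing the local vertex replacements of Figure \ref{fig:ReidPermute} and then discarding the free ends; inside $D$ this produces a diagram $D_i$ of $K_i$ whose crossings are a subset of the crossings of $D$. The key point is that because no two constituents in $\kappa$ overlap — i.e.\ they share no common edge — the crossing sets used by the $D_i$ are pairwise disjoint as subsets of the crossing set of $D$. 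Indeed, a crossing of $D$ that survives into $D_i$ is a self-crossing of the edge-set carrying $K_i$ or a crossing between two of its edges; if such a crossing also survived into $D_j$ with $j \neq i$, then an edge of $K_i$ would coincide with an edge of $K_j$, contradicting non-overlap.

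Granting this, summing gives $\sum_i |\mathrm{cr}(D_i)| \leq |\mathrm{cr}(D)| = c(G)$, and since $|\mathrm{cr}(D_i)| \geq c(K_i)$ by definition of crossing number, we get $mcc(G) = \sum_i c(K_i) \leq c(G)$. For the height statement I would run the identical argument using the diagram realizing $h(G)$: recall the height of a (multi-)knotoid diagram is the minimal number of crossings on a shortcut arc joining the endpoints over all diagrams, and the height of the object is the minimum of this over all equivalent diagrams. In a height-minimizing diagram $D$ of $G$, each constituent knotoid $K_i$ inherits a diagram $D_i$ together with a shortcut; the crossings that the shortcut for $K_i$ must cross are crossings of $D$ lying on the edges carrying $K_i$, and by the same disjointness these shortcut-crossing counts sum to at most the total, which is bounded by $h(G)$. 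Hence $mch(G) = \sum_i h(K_i) \leq h(G)$.

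The main obstacle I anticipate is making the disjointness argument airtight when the local replacements are performed at several vertices simultaneously and when a single edge of $D$ might be traversed more than once by the union of constituents; one has to be careful that "sharing a common edge'' is exactly the right notion to rule out shared crossings, and in particular that a crossing between edge $e$ of $K_i$ and edge $e'$ of $K_j$ with $e \neq e'$ is genuinely not double-counted — it is counted once for $K_i$ (as an inter-edge crossing of $K_i$) only if \emph{both} strands belong to $K_i$, which fails here, so such a crossing is counted for neither. A second, more technical point for the height inequality is choosing the shortcut arcs for the $D_i$ compatibly so that their crossing contributions are literally a subset of a fixed set of crossings of $D$; I would handle this by taking, for each $K_i$, the shortcut to run parallel to an edge-path of $D_i$ and pushing it slightly off, so that it crosses exactly those strands of $D$ that the corresponding sub-arc of $D$ crosses. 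Everything else is bookkeeping.
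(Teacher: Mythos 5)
The paper states this proposition without proof (it is followed immediately by an example), so there is no argument of the authors' to compare against; your proposal supplies the missing argument and it is correct. The core point — that in a crossing-minimal diagram $D$ each constituent $K_i$ inherits a diagram $D_i$ whose crossings are exactly the crossings of $D$ at which \emph{both} strands lie on edges of $K_i$, and that these crossing sets are pairwise disjoint precisely because non-overlapping constituents share no edge — is the right one, and your observation that a crossing between an edge of $K_i$ and an edge of $K_j$ is counted for neither is exactly the point that makes the bookkeeping work. Since each local replacement joins only two of the edges incident to a vertex, the connecting arcs introduce no new crossings, so $c(K_i)\leq|\mathrm{cr}(D_i)|$ and summing gives $mcc(G)\leq c(D)=c(G)$. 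For the height inequality your argument can be streamlined: in a non-overlapping family at most one constituent contains both the head and the tail (the edges incident to these degree-one vertices each lie in a single constituent), so all other constituents are links contributing zero height, and the single multi-knotoid constituent $K_{i_0}$ inherits, from a height-minimizing diagram of $G$, a diagram together with the \emph{same} shortcut arc, which now meets only a subset of the strands it met before; hence $mch(G)=h(K_{i_0})\leq h(G)$ without any need to re-route shortcuts in parallel to edge-paths.
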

For example, the graphoid in Figure \ref{yamadaexample} has height 1 and crossing number 1 since it contains a multi-knotoid whose closure is the virtual Hopf link.

\subsection{Rigid vertex invariants from local replacements}
This idea was also originated in \cite{KGraph}. Suppose that we have a $2k$-valence vertex $v$ in our virtual graphoid diagram $D$ representing a virtual graphoid $G$. We can replace $v$ with some $k$-string tangle $T$ to obtain a new diagram $D_T$. If $T$ is chosen carefully, then the equivalence class of $D_T$ is an invariant of $G$. By a careful selection of $T,$ we mean a tangle that is invariant under the rigid vertex Reidemeister moves. For instance, the "plat closure" or the "braid closure" are some good choices (see Figure \ref{fig:insert}).
\begin{figure}[ht!]
\centering
\includegraphics[width=5cm]{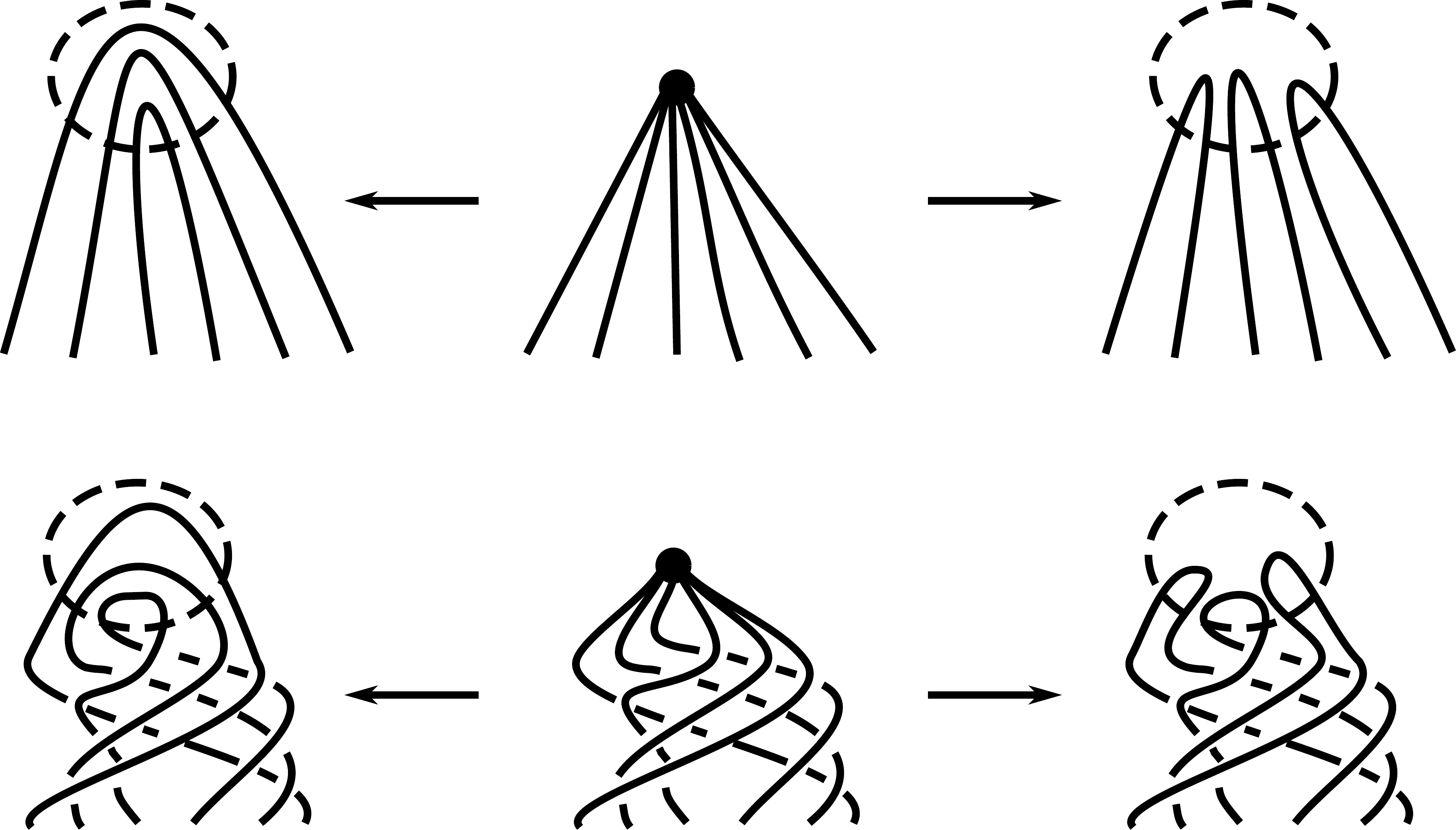}
\caption{The arrow pointing right shows the situation where a vertex is replaced by a plat closure. The arrow pointing left shows the situation where a vertex is replaced by a braid closure.}\label{fig:insert}
\end{figure}

\begin{cor}
    The graphoid $G_n$ on the left of Figure \ref{fig:rigidexmp} has virtual crossing number equal to $n-1.$
\end{cor}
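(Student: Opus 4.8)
The plan is to establish the corollary in two halves: an upper bound and a matching lower bound on the virtual crossing number of $G_n$. The upper bound $\leq n-1$ should be immediate by simply exhibiting the diagram on the left of Figure \ref{fig:rigidexmp}, counting its virtual crossings, and checking that it is a valid virtual graphoid diagram for $G_n$; I would verify it has exactly $n-1$ virtual crossings and no classical ones (or at least that the virtual count is $n-1$). The substance is the lower bound: every diagram representing $G_n$ has at least $n-1$ virtual crossings.

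For the lower bound, the plan is to leverage the rigid-vertex local replacement technique from the preceding subsection together with the crossing-number estimate in Proposition \ref{prop:virtualequiv}'s neighborhood — specifically the inequality $mcc(G)\leq c(G)$ and its height analogue, and more to the point the observation that replacing vertices by suitable rigid-vertex tangles (plat or braid closures) produces a virtual spatial graph whose equivalence class is an invariant of $G_n$. First I would identify, inside $G_n$, a constituent virtual object — presumably a constituent virtual link or multi-knotoid obtained by the local replacements in Figure \ref{fig:ReidPermute} — whose virtual crossing number is already known to be $n-1$. The natural candidate is a chain-like constituent built from $n-1$ "virtual Hopf link" pieces (the virtual Hopf link has virtual crossing number $1$, as used in the example after Proposition 4.9), so that a connected sum or disjoint-union-type constituent inherits virtual crossing number $n-1$; one then invokes additivity of the virtual crossing number under the relevant operation (disjoint union is exactly additive; for the multi-knotoid/connect-sum version one needs the known additivity or sub/superadditivity result for virtual crossing number, which I would cite). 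Since any diagram of $G_n$ restricts to a diagram of this constituent after the replacement, and the replacement does not create new virtual crossings beyond those already present, $c_v(G_n)\geq c_v(\text{constituent}) = n-1$.

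The main obstacle I anticipate is making the lower-bound argument fully rigorous: additivity (as opposed to mere subadditivity) of the virtual crossing number is not automatic, and for knotoids/multi-knotoids one must be careful that the local replacement at a vertex does not allow virtual crossings to be shared or cancelled between the constituent and the rest of the diagram. To handle this I would argue that the rigid-vertex replacement $D \mapsto D_T$ is crossing-number non-increasing on the classical-crossing count and exactly preserves virtual crossings outside small neighborhoods of the replaced vertices, so that a diagram of $G_n$ with fewer than $n-1$ virtual crossings would yield a diagram of the constituent with fewer than $n-1$ virtual crossings — contradicting the known value. The cleanest route is probably to choose the constituent to be a genuine virtual \emph{link} (not a multi-knotoid), namely an $(n-1)$-component chain each adjacent pair forming a virtual Hopf clasp, whose virtual crossing number is $n-1$ by a parity/homological argument (e.g. the total mod-2 linking obstruction, or the underlying abstract genus lower bound coming from Proposition \ref{prop:virtualequiv}): each virtual Hopf clasp forces a handle, so the supporting surface has genus $\geq n-1$, and genus is a lower bound for virtual crossing number. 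If the constituent is instead forced to be a multi-knotoid, I would fall back on the height/crossing estimates $mch(G)\leq h(G)$ and $mcc(G)\leq c(G)$ and the stable-equivalence interpretation of Section \ref{subsection:virtual} to get the same surface-genus lower bound.
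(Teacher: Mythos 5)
Your overall strategy matches the paper's: the upper bound comes from the exhibited diagram, and the lower bound comes from the rigid-vertex local replacement at the degree-$2n$ vertex, which converts any diagram of $G_n$ into a diagram of a fixed virtual knotted object without adding virtual crossings, so that a known lower bound for that object transfers back to $G_n$. The paper's proof is exactly this one-line reduction: replacing the vertex by the plat closure tangle yields the diagram on the right of Figure \ref{fig:rigidexmp}, and Kauffman's extended bracket polynomial computation in \cite{KExtended} already shows that the virtual closure of that object has virtual crossing number $n-1$.

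The gap in your version is the step where you actually establish that the constituent has virtual crossing number $n-1$. You propose either (a) additivity of the virtual crossing number over a chain of virtual Hopf clasps, or (b) a supporting-genus lower bound in which ``each virtual Hopf clasp forces a handle, so the genus is at least $n-1$.'' Neither is available off the shelf: additivity (rather than subadditivity) of virtual crossing number under connected sum or chaining is not a known general fact, and the claim that the minimal supporting genus of the chained object is $n-1$ is precisely as hard as the statement you are trying to prove --- a genus-$g$ surface could in principle carry several clasps at once, so ``one handle per clasp'' needs an argument (this is why the paper leans on the extended bracket polynomial, which is the tool that actually certifies the value $n-1$ for this family). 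Without citing \cite{KExtended} or supplying an equivalent computation, your lower bound does not close. A secondary, smaller issue: you should also justify that the replacement $D\mapsto D_T$ at a non-rigid move (VI) is legitimate here, i.e.\ that the plat-closure insertion is genuinely an invariant of $G_n$ as a (rigid-vertex) virtual graphoid, which is what licenses transferring the bound from the constituent back to every diagram of $G_n$.
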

\begin{proof}
    Kauffman showed by extended bracket polynomial in \cite{KExtended} that the virtual crossing number of the virtual closure of the graphoid on the right of Figure \ref{fig:rigidexmp} is $n-1$.
\end{proof}
\begin{figure}[ht!]
\centering
\includegraphics[width=5cm]{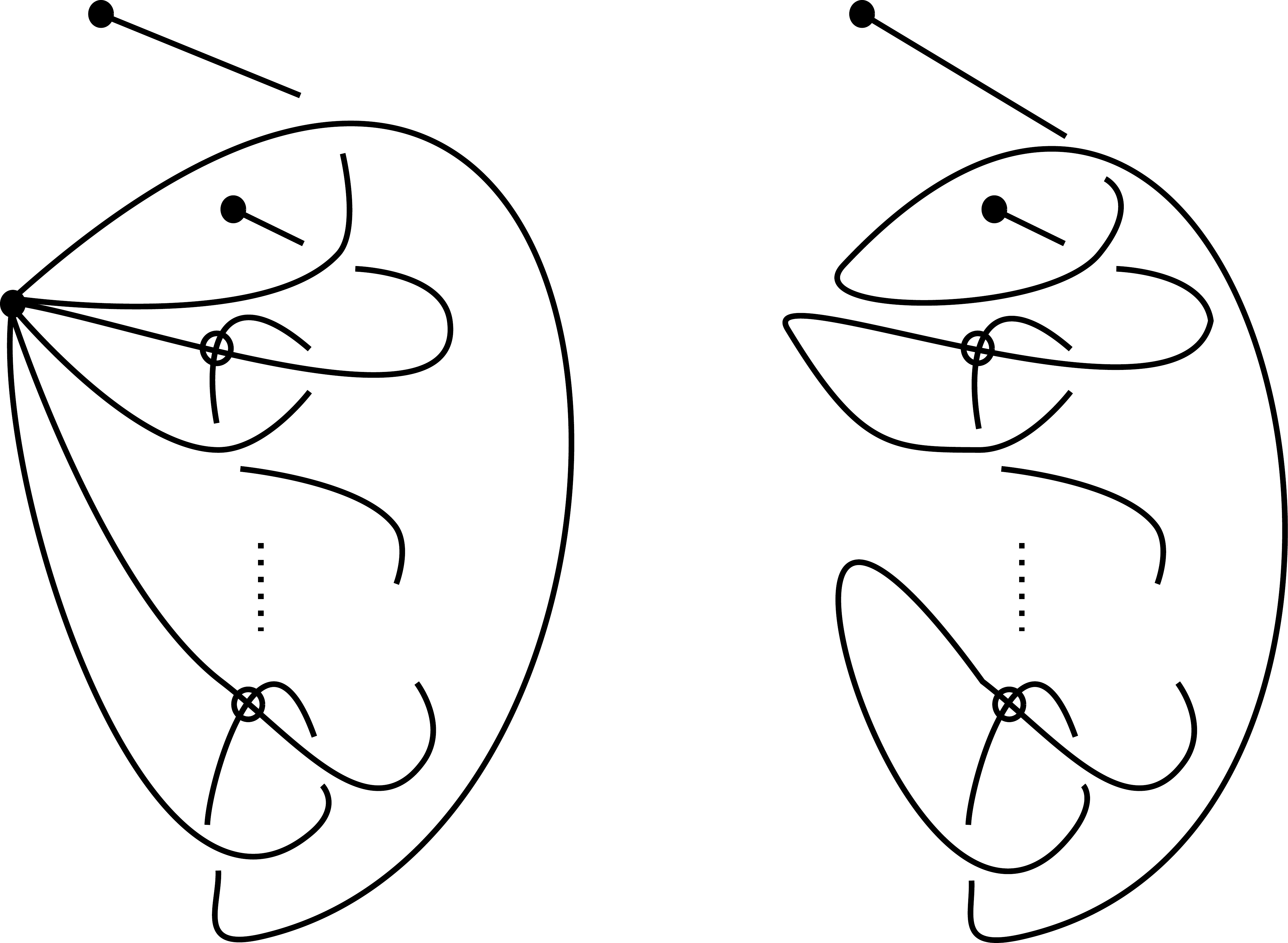}
\caption{Left: The virtual graphoid $G_n$. Right: A vertex is replaced by a plat closure tangle.}\label{fig:rigidexmp}
\end{figure}
\section{Polynomial invariants}\label{section:Yamada}

Arguably, one of the most utilized polynomials to study classical spatial graphs is the Yamada polynomial. There are three polynomials for virtual spatial graphs related to the Yamada polynomial that one can turn into graphoid invariants \cite{Deng,Flem,MM}. We will focus mainly on Fleming-Mellor's formulation $R(G)$ and Deng-Jin-Kauffman's formulation $R(G;A,x)$. 

\subsection{Yamada polynomials $R(G)$ for graphoids}

We begin by defining the Yamada polynomial $R(G)$, which can be computed on a graphoid diagram. This polynomial is particularly useful when applied to virtual spatial graphoids whose constituent knots and knotoids are all trivial. The polynomial $R(G)$ of a virtual graphoid $G$ will be equivalent to the Fleming-Mellor's Yamada polynomial of the virtual closure of $G$ \cite{Flem}. We remark that the Yamada polynomial is invariant under move (VI) in Figure \ref{graphReid} only when the valence of each vertex is at most three. Thus, for graphs with a vertex whose degree is at least four, the polynomial is a rigid vertex invariant.

\begin{defn}
    Let $D$ be a virtual graphoid diagram. The Yamada polynomial $R(D)$ is defined recursively with the relations given in Figure \ref{yamadaskein}. 
\end{defn}

\begin{figure}[ht!]

\centering
\includegraphics[width=11cm]{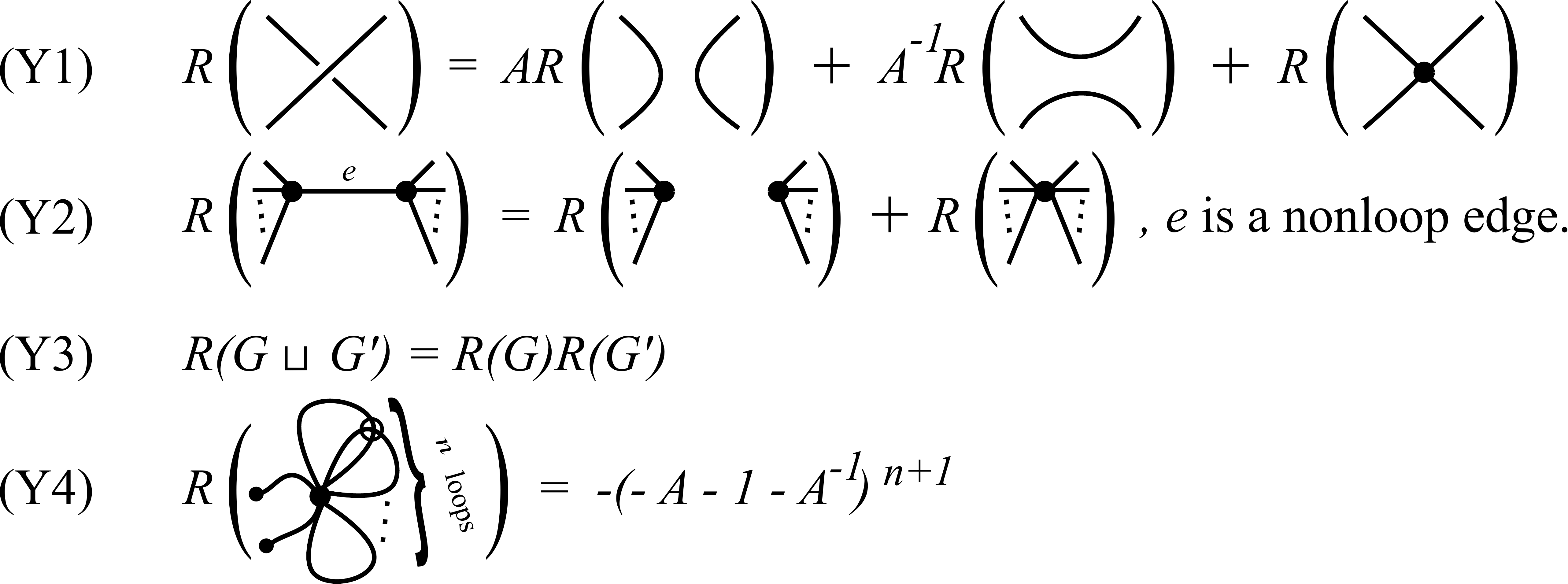}
\caption{Relations satisfied by the Yamada polynomial.}\label{yamadaskein}
\end{figure}

Note that we do not allow the contraction-deletion operations on an edge adjacent to the head/tail of $D$. Thus the relation (Y2) is only relevant for an edge that is adjacent to vertices of degree $k \geq 2$. The relation (Y4) evaluates $n$- bouquet graphoid diagram $\dot B_{n}$ for $n \geq 0$, that is a graphoid diagram with $n$ loops at a unique vertex on it. In particular, $R(K)= A+1+A^{-1}$ where $K= \dot B_0$ is the trivial knotoid diagram. We also assume $R(\emptyset)=1$, and $R(D)= -1$ if $D$ is a single vertex graph.

Figure \ref{yamadaexample} goes through how to compute $R(G)$ for a specific graphoid.
\begin{figure}[ht!]

\centering
\includegraphics[width=11cm]{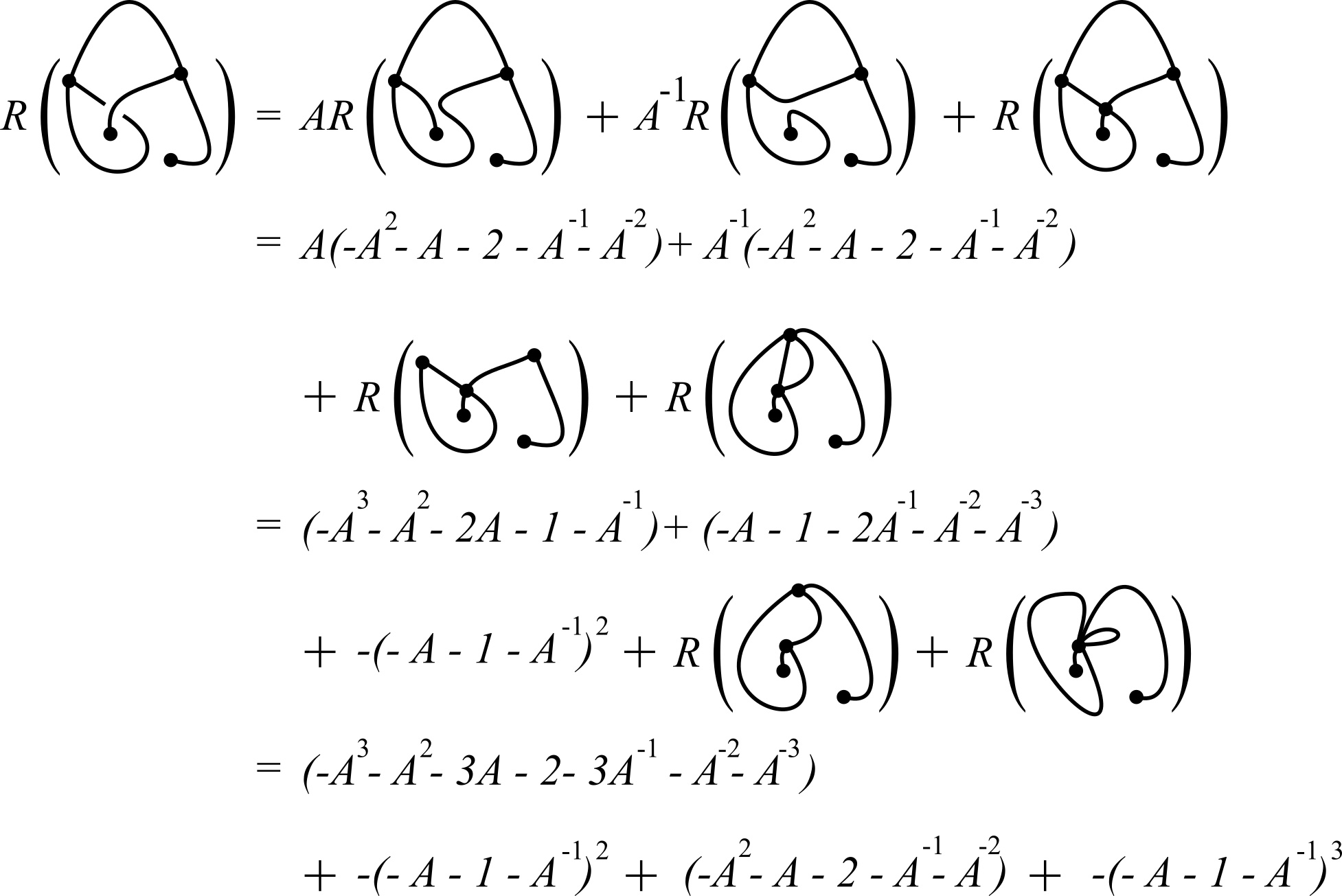}
\caption{A computation of the Yamada polynomial.}\label{yamadaexample}
\end{figure}

Next, we focus on the state-space expansion of $R(G)$, which involves three types of resolutions at a crossing. These resolutions are shown in Figure \ref{yamadasmooth} and we refer to them from left to right as the B-resolution, the A-resolution, and the X-resolution. After all crossings are spliced according to these three resolutions, we perform a virtual closure to join the leg and the tail of the graphoid. Let $\mathcal{S}$ be the set of states for a graphoid $G$. For a state $S$, we recall the special case of the flow polynomial
\begin{center}
    $H(S) = \displaystyle \sum_{F\subset E(S)}(-1)^{\beta_0(S-F)}(-A-2-A^{-1})^{\beta_1(S-F)}$.
\end{center}
\begin{figure}[ht!]

\centering
\includegraphics[width=5cm]{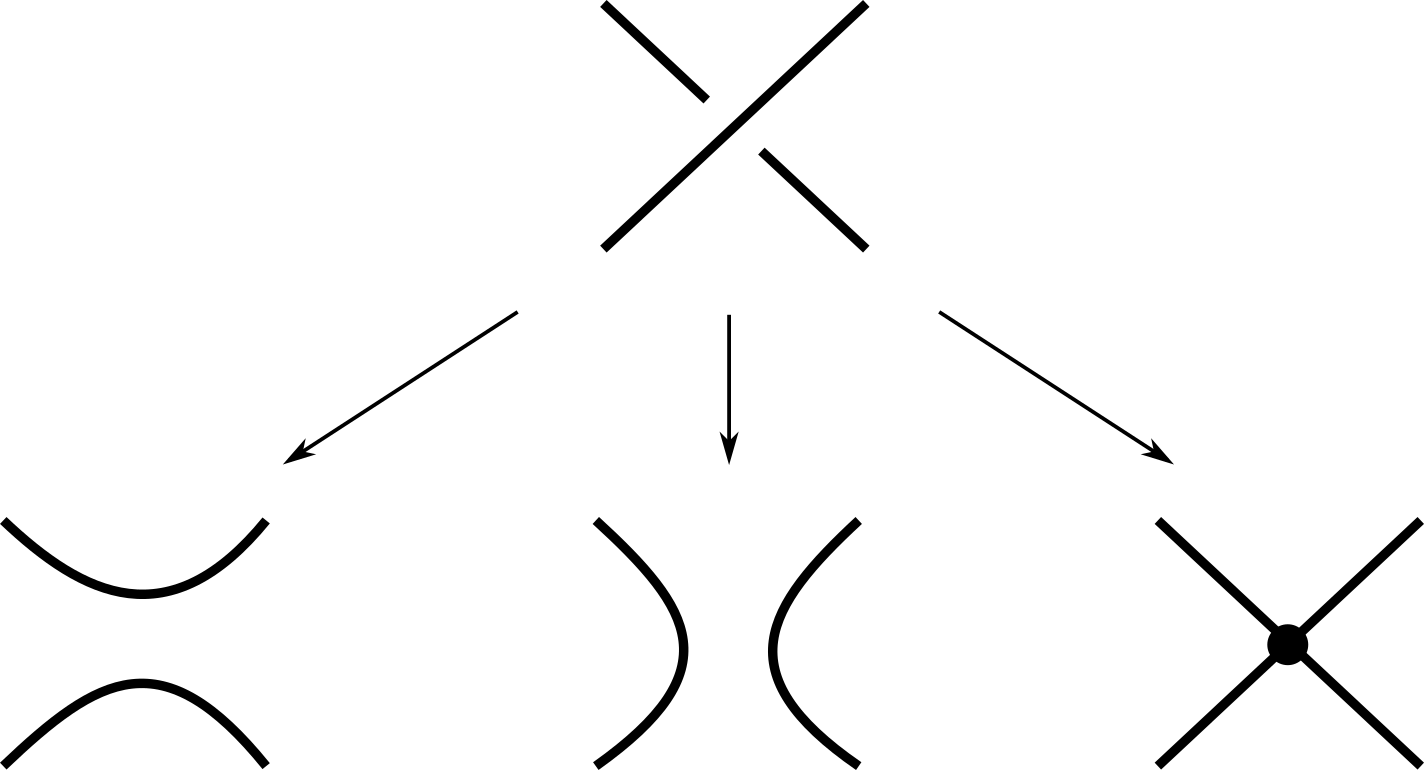}
\caption{The three types of resolutions involved in the definition of the Yamada polynomial.}\label{yamadasmooth}
\end{figure}
Then, the Yamada polynomial is $R_G(A) = \displaystyle \sum_{S \in \mathcal{S}}A^{a(S)-b(S)}H(S)$, where $a(S)$ (resp. $b(S)$) stands for the number of A-resolutions (resp. B-resolutions) in $S$.
The next lemma relates the maximum and minimum degree of $R(G)$ to the crossing number of $G$, the all A states $S_A$ and the all B states $S_B$. We perform our analysis on the virtual spatial graph that is the closure of the graphoid.

\begin{lem}\label{lem:degreeinequality}
Let $G$ be a virtual spatial graph. Let $D$ be a diagram of $G$. Then, the following statements hold:
(i) max deg($R) \leq c(D) +\beta_1(S_A(G))$ \\ (ii) min deg($R) \leq -c(D) -\beta_1(S_B(G))$
\end{lem}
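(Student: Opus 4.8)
The plan is to analyze the state-sum formula $R_G(A) = \sum_{S \in \mathcal{S}} A^{a(S)-b(S)} H(S)$ term by term and bound the degree contribution of each piece. First I would record the elementary fact that for each state $S$, the exponent $a(S) - b(S)$ satisfies $a(S) - b(S) \leq c(D)$, with equality precisely when $S = S_A(G)$ is the all-$A$ state (and symmetrically $a(S) - b(S) \geq -c(D)$ with equality iff $S = S_B(G)$). So the maximal power of $A$ that can possibly appear from the prefactor alone is $A^{c(D)}$. The remaining work is to control $\max\deg H(S)$ uniformly over states and, more importantly, to see that the extremal state $S_A$ already realizes the bound in (i).

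Next I would examine $H(S) = \sum_{F \subset E(S)} (-1)^{\beta_0(S-F)} (-A-2-A^{-1})^{\beta_1(S-F)}$. Since $-A - 2 - A^{-1} = -(A^{1/2}+A^{-1/2})^2$ has maximal degree $1$ in $A$ (and minimal degree $-1$), the term indexed by $F$ contributes a polynomial of degree at most $\beta_1(S-F)$; and $\beta_1(S - F) \leq \beta_1(S)$ since deleting edges can only decrease the first Betti number. Hence $\max\deg H(S) \leq \beta_1(S)$ for every state $S$. The key estimate I want is then the ``monotonicity'' claim: $\beta_1(S)$ is maximized, among all states, by the all-$A$ state $S_A$ — or at least that $a(S) - b(S) + \beta_1(S) \leq c(D) + \beta_1(S_A)$ for every $S$. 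I would prove this by the standard argument of changing one resolution at a time: replacing an $A$-resolution by a $B$-resolution (or an $X$-resolution) at a single crossing changes $a(S)-b(S)$ by $-2$ (or $-1$), while $\beta_1$ of the associated spatial graph can go up by at most $1$ (or by at most the appropriate small amount), so the sum $a(S)-b(S)+\beta_1(S)$ cannot increase past its value at $S_A$. This local-comparison bookkeeping is the main obstacle: one has to check that an $X$-resolution, which is not the usual Kauffman smoothing, also obeys the required inequality, and that the presence of genuine graph vertices (degree $\geq 3$) rather than just arcs does not break the count. Part (ii) follows by the mirror-symmetric argument (or by applying (i) to the mirror diagram, since $R$ behaves predictably under $A \mapsto A^{-1}$).

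Assembling the pieces: for an arbitrary state $S$,
\begin{align*}
\max\deg\bigl(A^{a(S)-b(S)} H(S)\bigr) &= a(S) - b(S) + \max\deg H(S) \\
&\leq a(S) - b(S) + \beta_1(S) \\
&\leq c(D) + \beta_1(S_A(G)),
\end{align*}
and taking the maximum over $S \in \mathcal{S}$ gives $\max\deg(R) \leq c(D) + \beta_1(S_A(G))$, which is (i). For (ii), the symmetric chain using $a(S) - b(S) \geq -c(D)$, $\min\deg H(S) \geq -\beta_1(S)$, and the $B$-state comparison yields $\min\deg(R) \geq -c(D) - \beta_1(S_B(G))$; note this is the negative of the displayed inequality, so I would double-check the sign convention in the statement (the intended inequality is presumably $\min\deg(R) \geq -c(D) - \beta_1(S_B(G))$, equivalently the displayed form with the direction of ``$\leq$'' on the negated quantities). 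The one subtlety worth isolating as a sub-lemma is the behavior of $H(S)$ under edge deletion and under a single resolution change; once that is pinned down, everything else is the routine maximization above. I expect the degree bookkeeping for the $X$-resolution to be the step most likely to need care, since it introduces a virtual crossing rather than reconnecting strands in one of the two classical ways.
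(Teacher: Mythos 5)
Your proposal follows essentially the same route as the paper's proof: bound $\max\deg H(S)$ by $\beta_1(S)$ via the flow-polynomial state sum, and then show the quantity $a(S)-b(S)+\beta_1(S)$ is maximized at the all-$A$ state by comparing states that differ at a single crossing (the paper records exactly the local inequalities $\beta_1(S_x)-1\leq\beta_1(S_a)\leq\beta_1(S_x)$ and the corresponding shifts in $a-b$ that you sketch). Your observation that part (ii) should read $\min\deg(R)\geq -c(D)-\beta_1(S_B(G))$ is correct and worth keeping: that is the direction actually used afterwards to bound $\mathrm{span}(R)$, so the displayed inequality in the statement is a sign/direction typo rather than a flaw in your argument.
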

\begin{proof}
We will only provide the argument for (i). With very minor changes, the readers will see that the provided argument works for (ii).  Observe that for the all A states, $c(D)=a(S_A)-b(S_A)$. Suppose that $S_a,S_b,$ and $S_x$ are three states where all resolutions are identical except at one place. At that one place, say $S_a, S_b,$ and $S_x$ has the A-resolution, B-resolution, and the X-resolution, respectively.

Observe that $S_x$ has the same Euler characteristic with the graph obtained from $S_a$ by adding one edge. If such an edge creates one more cycle, then $\beta_1(S_a) = \beta_1(S_x)-1$. If such an edge merely connects two disconnected components, then $\beta_1(S_a) = \beta_1(S_x)$. In conclusion, $\beta_1(S_x)-1\leq \beta_1(S_a) \leq \beta_1(S_x)$. Similarly, $\beta_1(S_x)-1\leq \beta_1(S_b) \leq \beta_1(S_x)$. Combining these inequalities together, we have that $\beta_1(S_b)-1\leq \beta_1(S_a)\leq \beta_1(S_b)+1$. Obviously, $a(S_b) = a(S_x) = a(S_a) -1$ and $b(S_a) = b(S_x) = b(S_b) -1$. From these relationships, we can see that $a(S_x)-b(S_x)+\beta_1(S_x)\leq a(S_a)-b(S_a)+\beta_1(S_a)$ and $a(S_b)-b(S_b)+\beta_1(S_b)\leq a(S_a)-b(S_a)+\beta_1(S_a)$.

Now, max deg $H(S) \leq \beta_1(S)$ because the exponents of $A$ in $H(S)$ are in terms of the first Betti number of $G$ with some edges removed. In conclusion, for any state $S$,

\begin{align*}
    max deg(R) &\leq max deg A^{a(S)-b(S)}H(S) \\ &\leq a(S)-b(S)+max H(S)\\ &\leq a(S_A)-b(S_A)+\beta_1(S_A(G)).
\end{align*}
The final inequality follows from the fact that any state $S$ is connected to $S_A$ by a sequence of states, where adjacent states in the sequence differ by turning a B-resolution or an X-resolution to an A-resolution.
\end{proof}

Therefore, we can conclude that span($R$) = max deg$(R)$ $-$ min deg$(R) \leq 2c(D) + \beta_1(S_A(G))+ \beta_1(S_B(G))$. A natural question arises: When can we say the equality holds? We will see that if $G$ belongs to the family of virtual graphoids called \textit{adequate} graphoids, then span($R(G)$) = max deg$(R(G))$ $-$ min deg$(R(G)) = 2c(D) + \beta_1(S_A(G))+ \beta_1(S_B(G))$. 

 The notion of adequacy dates back to Lickorish and Thistlethwaite \cite{LK} and have been generalized to other knotted objects \cite{Bae, BKS}. We now define the notion of adequacy. While the definition is reminiscent of adequate virtual knots, it is more complicated and the readers are encouraged to refer back to Figure \ref{fig:adequate} as they read the definition.

\begin{figure}[ht!]
\labellist \small\hair 2pt
\pinlabel $D$ at 7 1146
\pinlabel $S_A'$ at 707 1146
\pinlabel $S_B'$ at 2007 1146
\pinlabel $F_A$ at 707 746
\pinlabel $F_B$ at 2007 746
\pinlabel $J_A$ at 707 246
\pinlabel $J_B$ at 2007 246

\endlabellist
\centering
\includegraphics[width=11cm]{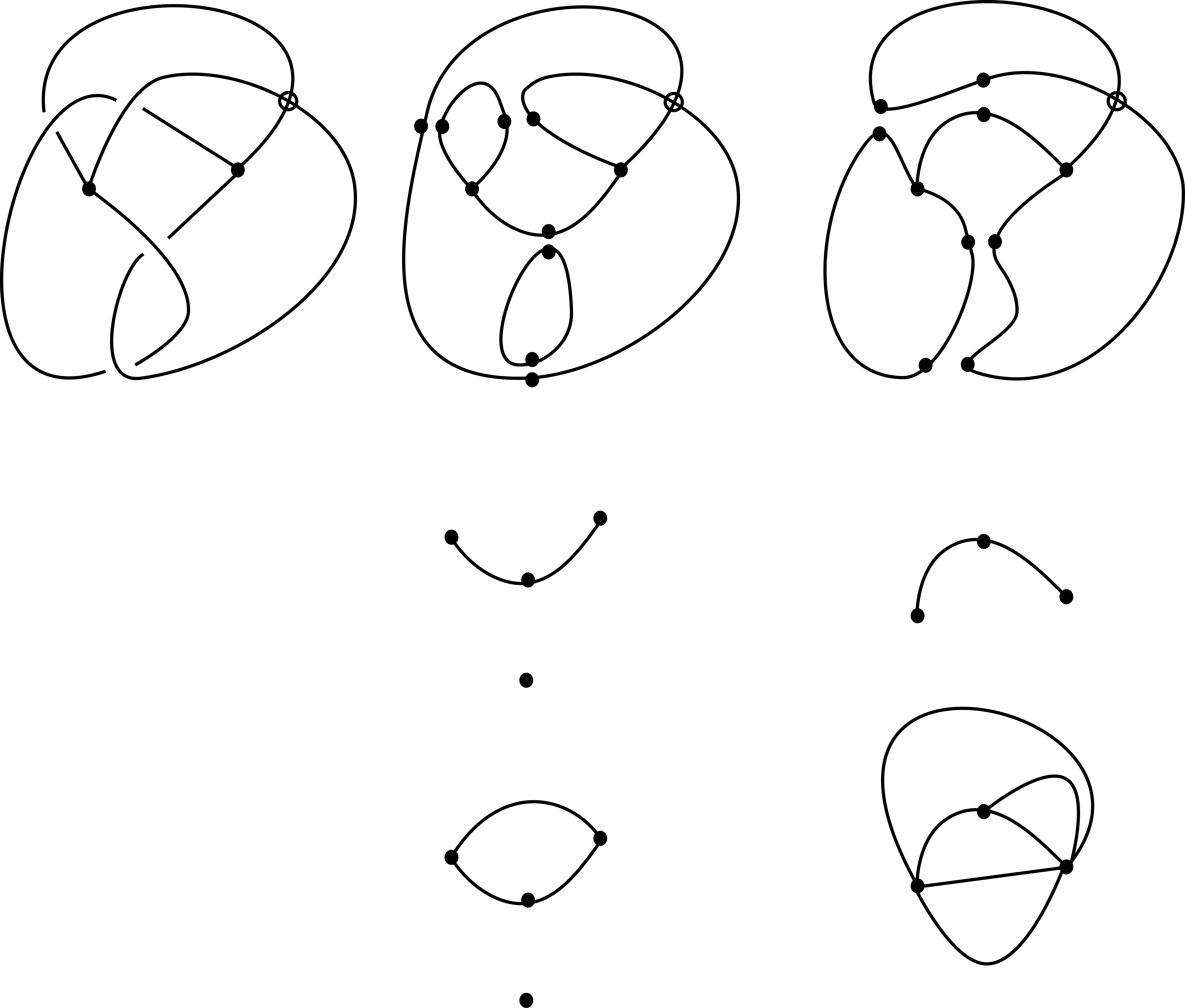}
\caption{Various graphs appearing in the definition of adequate virtual spatial graph.}\label{fig:adequate}
\end{figure}

Let $S_A'$ (resp. $S_B'$) denote the result of adding two vertices, where each vertex lies on each arc in the local picture of the $A$-resolution (resp. the $B$-resolution). Let $F_A$ (resp. $F_B$) be a graph where

\begin{enumerate}
    \item Each vertex corresponds to a connected component of $S_A'-CE(S_A')$ (resp. $S_B'-CE(S_B')$, where $CE(-)$ denotes the set of cut edges.
    \item The edges are the cut edges with a natural incidence induced from $S_A'$ (resp. $S_B'$).
    \end{enumerate}
    
Let $x$ be a crossing point of $G,$ and let $S_{A,x}$ be the state obtained from $S_A$ by replacing the A-resolution at $x$ to the X-resolution. A crossing $x$ is called $A$-essential (resp. $B$-essential) if doing this replacement increases the first Betti number of $S_A$ (resp. $S_B$) by 1. Let $J_A$ (resp. $J_B$) be a graph obtained from $F_A$ (resp. $F_B$) by adding edges corresponding to the $A$-essential (resp. $B$-essential) crossings. Let $a_k$ (resp. $b_k$) be the number of subgraphs $K_A$ (resp. $K_B$) of $J_A$ (resp. $J_B$) such that
\begin{enumerate}
    \item $F_A\subset K_A$ (resp. $F_B\subset K_B$).
    \item $K_A$ (resp. $K_B$) has no cut edges.
    \item $\beta_1(K_A)=k$ (resp. $\beta_1(K_B)=k$).
 \end{enumerate}

 \begin{defn}
 A diagram $D$ of a virtual spatial graph is \textbf{$A$-adequate} (resp. \textbf{$B$-adequate}) if $\sum_{k=0}^{\infty}a_k\neq 0$ (resp. $\sum_{k=0}^{\infty}b_k\neq 0$). A virtual spatial graph diagram is \textbf{adequate} if it is both $A$-adequate and $B$-adequate. A virtual spatial graph is \textbf{adequate} if it admits an adequate diagram.
 \end{defn}
 \begin{exmp}
     Let $D$ be the diagram in Figure \ref{fig:adequate}. We claim that $D$ is adequate. Observe that there is only one possibility for $K_A$, which is the graph $J_A$ itself. We remark that $F_A$ does not satisfy condition (2) in the definition of $K_A$ because it has a cut edge. It follows that $a_1=1$ and the quantity $\sum_{k=0}^{\infty}a_k=-1$ is nonzero. Therefore, $D$ is $A$-adequate.

     Next, note that $b_0=0$. Also, the value of $b_k$ for $k\neq 0$ is $\binom{4}{k}$ because to form $J_B$ from $F_B$ there are 4 new edges added. Each choice of $k$ edges we choose to remain contributes one to $b_k.$ Thus, $\sum_{k=0}^{\infty}b_k = -4+6-4+1=-1\neq 0$ and $D$ is $B$-adequate.
 \end{exmp}
 \begin{exmp}
Adequate virtual links are examples of adequate virtual spatial graphs. In this case, $S_A$ and $S_B$ do not contain cut edges. This means that $F_A$ and $F_B$ are isolated vertices. Thus, $J_A$ and $J_B$ may contain some number of loops. Any such loop gives rise to a self-abutting cycle, which cannot exist for adequate link diagrams.
\end{exmp}
\begin{exmp}
    Starting with an adequate classical spatial graph, and turning a random crossing to a virtual crossing does not always give an adequate virtual spatial graph. However, if one is selective and turns a crossing that is away from cut edges of the all A-states and all B-states, one will get an adequate virtual spatial graph.
\end{exmp}
\begin{thm}
If $D$ is adequate, then max deg$(R)$ $-$ min deg$(R) = 2c(D) + \beta_1(S_A(G))+ \beta_1(S_B(G))$
\end{thm}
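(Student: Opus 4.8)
The plan is to upgrade the inequality of Lemma \ref{lem:degreeinequality} to an equality by showing that, under adequacy, the top-degree contribution from the all-$A$ state $S_A$ (and symmetrically the bottom-degree contribution from $S_B$) cannot be cancelled by any other state. Recall from the state-sum $R_G(A) = \sum_{S\in\mathcal S} A^{a(S)-b(S)} H(S)$ that every state $S$ satisfies $a(S)-b(S) + \max\deg H(S) \le a(S_A) - b(S_A) + \beta_1(S_A)$, with equality forced only for states obtained from $S_A$ by turning $A$-resolutions into $X$-resolutions at crossings that raise $\beta_1$ — i.e.\ at the $A$-essential crossings. So the coefficient of $A^{c(D)+\beta_1(S_A)}$ in $R_G(A)$ is a sum over exactly those states $S$ of the top coefficient of $H(S)$. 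First I would make precise, using the definitions of $S_A'$, $F_A$, $J_A$, that these maximal-degree states are in bijection with subgraphs of $J_A$ containing $F_A$ obtained by choosing a subset of the $A$-essential edges, and that the top coefficient of $H(S)$ for such a state is (up to a uniform sign) the number of spanning connected, bridgeless-on-the-chosen-part configurations — which is precisely what the numbers $a_k$ tabulate. The upshot is that the coefficient of the extreme term equals $\pm\sum_{k\ge 0} a_k$ (possibly with alternating signs absorbed into the $(-A-2-A^{-1})$ weighting, which is why $H(S)$ is the flow polynomial and the sign bookkeeping works out), so $A$-adequacy, $\sum a_k \ne 0$, is exactly the statement that this extreme coefficient does not vanish; hence $\max\deg(R) = c(D) + \beta_1(S_A(G))$.

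The key steps, in order, are: (1) isolate the set of states achieving the degree bound in Lemma \ref{lem:degreeinequality}(i), namely those reached from $S_A$ by $X$-resolving only $A$-essential crossings, and verify no $B$-resolution can be involved at the top degree; (2) compute $\max\deg H(S)$ and its leading coefficient for such a state in terms of the cut-edge decomposition, identifying the leading coefficient with a signed count of bridgeless subgraphs — this is where the graph $F_A$ (collapsing each bridgeless block of $S_A'$ to a vertex) and $J_A$ (adjoining the $A$-essential crossings as new edges) enter, and where conditions (1)–(3) defining $a_k$ come from; (3) sum over all such states to get the extreme coefficient of $R$ equal to $\pm\sum_k a_k$; (4) invoke $A$-adequacy to conclude this is nonzero, giving $\max\deg(R) = c(D)+\beta_1(S_A)$; (5) run the mirror argument with $B$-resolutions, $S_B$, $F_B$, $J_B$, $b_k$ to get $\min\deg(R) = -c(D)-\beta_1(S_B)$; (6) subtract. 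Throughout I would keep an eye on the graphoid-specific subtlety that the contraction-deletion relation (Y2) is disallowed on edges at the head/tail and that the state sum is taken after performing the virtual closure — but since Lemma \ref{lem:degreeinequality} already operates on the virtual closure as a genuine virtual spatial graph, this causes no new difficulty here.

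The main obstacle I expect is step (2): pinning down the leading coefficient of the flow-type polynomial $H(S)$ on these near-$S_A$ states and matching it, sign and all, to $\sum_k a_k$. One must check that (a) the only subgraphs $F\subset E(S)$ contributing to $\max\deg H(S)$ are those that survive on every bridge (cut edge forces membership, since deleting a bridge drops $\beta_1$ without the compensating $\beta_0$ change needed to stay at top degree), so the count localizes to the $A$-essential part; (b) the weight $(-A-2-A^{-1})^{\beta_1}$ means each such subgraph of first Betti number $k$ contributes $(-1)^{\beta_0(S-F)}(-1)^k$ to the coefficient of the top power, and the $\beta_0$ sign is constant across the relevant family because connectivity is forced by condition (2) (no cut edges in $K_A$); (c) hence the extreme coefficient is $\pm\sum_k(-1)^k\binom{\cdots}{} $-type sum that, after the reindexing built into the definition of $a_k$, collapses to $\pm\sum_k a_k$. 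The Example with Figure \ref{fig:adequate} ($\sum a_k = -1$, $\sum b_k = -4+6-4+1 = -1$) is exactly the sanity check that the alternating signs are already absorbed into the $a_k, b_k$ themselves, so that the final condition is the clean $\sum a_k\ne 0$. Once this coefficient identification is nailed down, steps (1), (3)–(6) are bookkeeping of the kind already carried out in the proof of Lemma \ref{lem:degreeinequality}.
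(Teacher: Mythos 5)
Your proposal follows essentially the same route as the paper's proof: isolate the states contributing to the extreme degree (no $B$-smoothings, no cut edges, $X$-smoothings only at $A$-essential crossings), identify them with the subgraphs $K_A$ counted by the $a_k$, show the extreme coefficient equals a signed sum $\pm\sum_k a_k$, and invoke adequacy, with the mirror argument for the minimum degree. Your step (2) is in fact slightly more explicit than the paper about where the leading coefficient of $H(S)$ and the sign bookkeeping come from, but the argument is the same.
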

\begin{proof}
The proof will be similar to the proof found in \cite{Motohashi}. We will show that the coefficient of $A^{\max \ deg(R)}=A^{c(D)+\beta_1(S_A(G))}$ is nonzero. In fact, that coefficient turns out to be $(-1)^{\chi(G)}\sum_{k=0}^{\infty}a_k\neq 0$ which is nonzero according to how we defined adequacy. Essentially the same argument shows that the coefficient of $A^{\min \ deg(R)}=A^{-c(D)-\beta_1(S_B(G))}$ is nonzero. As we have seen the proof of Lemma \ref{lem:degreeinequality}, and due to the fact that $H(S)=0$ for a state $S$ with cut edges, it's enough to consider states that may contribute to the extreme terms $S$ with the following properties: (1) $S$ has no B-smoothings, (2) $S$ has no cut edges, and (3) each X-smoothing gives an A-essential crossing. To eleborate more on (3), if an X-smoothing does not increase $\beta_1$, then $a(S_x)-b(S_x)+\beta_1(S_x) = a(s_a)-b(s_a)+\beta_1(S_1)-1$, which is strictly less than the exponent associated to the extreme term.

Since each X-smoothing increases the first Betti number by one, $\beta_1(S) = \beta_1(S_A)+x(S)$, where $x(S)$ is the number of X-resolutions. This implies that $a(S)-b(S)+\beta_1(S)=a(S)+\beta_1(S_A)+x(S)=c(G)+\beta_1(S_A) = max(A^{a(S)-b(S)}H(S)).$ Here, the final equalty follows from the fact that max$(H(S))=\beta_1(S)$ and the coefficient of $A^{a(S)-b(S)+\beta_1(S)}=(-1)^{\chi(S)}$. 

By design, the $a_k$ subgraphs $K_A(S)$ appearing in the definition of adequate virtual spatial graphs correspond to the $a_k$ states of $D$ with no B-smoothings, no cut edge, and $k$ X-smoothings at essential crossings.
Therefore,
\begin{align*}
    A_{c(G)+\beta_1(S_A)}(R(G)) &=\sum^{\infty}_{k=0}(-1)^{\chi(G)+k}a_k.
\end{align*}.

\end{proof}

We now proceed to bound $\beta_1(S_A(G))+ \beta_1(S_B(G))$ in terms of other quantities that are easier to work with.

We defined a dual graph $G^*$ from a cellularly embedded projection $\widehat{G}$ on a closed surface $F$ as follows:
\begin{enumerate}
    \item The vertices of $G^*$ correspond to the disk regions of $F\backslash \widehat{G}$.
    \item For each crossing point $x$ of $\widehat{G}$, there are two edges of $G^*$ such that each of which joins the two disk regions incident at $x$. 
\end{enumerate}

We let $s(\widehat{G})$ be $\beta_0(G^*)$. By $s(G)$, we mean the maximum of $s(\widehat{G})$ over all shadows $\widehat{G}$ of $G$. In the proof below, $\widehat{G}$ is assumed to be connected. The proof can be generalized to handle multiple connected components and $\beta_0(\widehat{G})$ will show up in the inequality.
\begin{lem}
Let $D$ be a diagram of $G$ on a surface $F$ that is cellularly embedded. Then, $\beta_1(S_A(D))+\beta_1(S_B(D)) \leq  s(\widehat{G})-\chi(G)+c(D)-\chi(F)+2.$\label{lem:interpretingfirstbetti}
\end{lem}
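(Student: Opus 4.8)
The plan is to relate the two all-resolution states $S_A(D)$ and $S_B(D)$ to the surface $F$ and to the dual graph $G^*$ by an Euler-characteristic bookkeeping argument. First I would set up notation: for a state $S$ of $D$ on $F$, $S$ is a graph embedded in $F$ obtained by resolving every crossing, and since $D$ is cellularly embedded, so is each resolution, so $\chi(F) = \beta_0(S) - \beta_1(S) + f(S)$, where $f(S)$ counts the disk faces of $F \setminus S$. Writing this for $S = S_A(D)$ and $S = S_B(D)$ and adding, I would express $\beta_1(S_A(D)) + \beta_1(S_B(D))$ as $\beta_0(S_A) + \beta_0(S_B) + f(S_A) + f(S_B) - 2\chi(F)$, and then the goal reduces to bounding the four remaining terms.

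The key geometric observation is the standard one that, in a small disk around a crossing $x$, the $A$-resolution and the $B$-resolution together locally look like the two edges of $G^*$ at $x$: each face of $F\setminus S_A$ either is (close to) a face of $F\setminus \widehat{G}$ or has been merged with a neighbour across an $A$-smoothing, and dually for $S_B$. I would make this precise by building, from $\widehat{G}$, the graph $S_A \sqcup S_B$ and comparing its faces with the $s(\widehat{G})$-related data: the number of faces one gains by passing from $\widehat{G}$ to $S_A$ plus the number one gains passing to $S_B$ is controlled by the number of edges of $G^*$, which is $2c(D)$, and by the connectivity of $G^*$, which is exactly what $s(\widehat{G}) = \beta_0(G^*)$ records. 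Concretely, I expect an identity of the shape $f(S_A) + f(S_B) = f(\widehat{G}) + (\text{something involving } c(D) \text{ and } s(\widehat{G}))$, together with $f(\widehat G) = \chi(F) - \beta_0(\widehat G) + \beta_1(\widehat G)$ and the relation $\beta_1(\widehat G) = c(D) + (\text{edges of }G) - (\text{vertices of }G) + \beta_0 = c(D) - \chi(G) + \beta_0(\widehat G)$ (here $\chi(G)$ means $\chi$ of the underlying abstract graph, including the crossings as $4$-valent vertices or not — I would fix the convention to match the statement). Substituting and also bounding $\beta_0(S_A) + \beta_0(S_B)$ (each resolution can only split components, but the two together are again constrained by $G^*$) should collapse everything to the claimed inequality $s(\widehat G) - \chi(G) + c(D) - \chi(F) + 2$.

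The main obstacle, and the step I would spend the most care on, is making the face-count comparison $f(S_A) + f(S_B)$ versus the faces of $F \setminus \widehat G$ rigorous, because a single $A$-smoothing can merge two distinct faces into one \emph{or} can merely pinch one face (if the two local strands already bound the same face), and the count of which alternative occurs is precisely $\beta_0(G^*)$ versus the edge count of $G^*$; I would organize this as an inclusion-exclusion / spanning-subgraph count in $G^*$, resolving the crossings one at a time and tracking how $\beta_0$ and $\beta_1$ of the partially-resolved diagram change, so that the sum over $A$- and $B$-choices telescopes. Once that combinatorial lemma is in hand, the rest is the linear algebra of Euler characteristics sketched above, and the $+2$ and the sign of $\chi(G)$ are just a matter of choosing conventions consistently. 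I would also note at the end that if $\widehat G$ is disconnected the same argument runs componentwise and contributes an extra $\beta_0(\widehat G)$ term, as remarked before the statement.
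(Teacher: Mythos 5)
Your plan correctly identifies the right objects (the dual graph $G^*$, Euler characteristics of $F$, and $s(\widehat G)=\beta_0(G^*)$), but the decisive step is left as an expectation rather than proved, and one of your stated premises is false. The premise: it is not true that the resolutions of a cellularly embedded diagram are again cellularly embedded --- on a surface of positive genus a state circle can fail to bound a disk (this is precisely the subtlety behind the extra Betti-number terms in the adequacy results for links in thickened surfaces, cf.\ \cite{BKS}). Consequently the formula $\chi(F)=\beta_0(S)-\beta_1(S)+f(S)$, with $f(S)$ counting disk faces, does not hold for $S_A(D)$ and $S_B(D)$ as written, and the bookkeeping you build on it is not available in that form.

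More importantly, the heart of the lemma is exactly the face-count comparison you defer to ``the step I would spend the most care on.'' The paper resolves it not by a crossing-by-crossing telescoping count but by a homotopy-type observation: since the faces of $\widehat G$ are disks, the complement $F\setminus S_A(D)$ deformation retracts onto the spanning subgraph $G_A^*\subset G^*$ consisting of all vertices of $G^*$ together with the dual edges disjoint from $S_A(D)$ (one of the two dual edges at each crossing), and dually for $B$; this gives $\beta_1(S_A(D))=\beta_0(G_A^*)-\chi(F)+1$ and likewise for $B$ without ever needing the states to be cellular. The Mayer--Vietoris sequence for $G^*=G_A^*\cup G_B^*$ with $G_A^*\cap G_B^*=V(G^*)$ then yields
\[
\beta_0(G_A^*)+\beta_0(G_B^*)\ \le\ \beta_0(G^*)+|V(G^*)|\ =\ s(\widehat G)+\bigl(-\chi(G)+c(D)+\chi(F)\bigr),
\]
which is the lemma after substituting into the two displayed equalities. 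Note also that the relation you hoped for cannot be an identity: the slack in the surjection at the end of the Mayer--Vietoris sequence is exactly why the statement is an inequality. So the gap is concrete: you still need (i) the identification of $F\setminus S_A(D)$ and $F\setminus S_B(D)$ with the two complementary spanning subgraphs of $G^*$, and (ii) the bound on $\beta_0(G_A^*)+\beta_0(G_B^*)$; neither is supplied by the sketch.
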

\begin{proof}
Since $\widehat{G}$ has the same Euler characteristic as the graph obtained from $G$ by adding $c(\widehat{G})$ edges, it follows that $\chi(\widehat{G}) = \chi(G)-c(\widehat{G})$. We then have that $\beta_1(\widehat{G}) = \beta_0(\widehat{G})-\chi(\widehat{G})=1-\chi(G)+c(\widehat{G}).$ 

Now, since $D$ is cellularly embedded, $|V(G^*)|=\chi(F)-1+\beta_1(\widehat{G}).$ It follows that $|V(G^*)|=1-\chi(G)+c(\widehat{G})+\chi(F)-1=-\chi(G)+c(\widehat{G})+\chi(F).$ Let $G_A^*$ and $G_B^*$ be the subgraphs of $G^*$ that (1) share the same vertices with $G^*$ (2) $E(G_A^*)\cup E(G_B^*)=E(G^*)$ (3) $E(G_A^*)\cap E(G_B^*)=\emptyset$ and (4) $G_B^*\cap S_B=G_A^*\cap S_A=\emptyset$. By the Mayer-Vietoris sequence $H_0(V(G^*))\rightarrow H_0(G_A^*) \oplus H_0(G_B^*) \rightarrow H_0(G^*)\rightarrow 0.$ Since the sequence is exact, $\beta_0(G_A^*)+\beta_0(G_B^*)\leq \beta_0(G^*)+|V(G^*)|.$

Now, $\beta_1(S_A(D)) = \beta_0(F\backslash S_A(D))-\chi(F)+1.$ Similarly, $\beta_1(S_B(D)) = \beta_0(F\backslash S_B(D))-\chi(F)+1.$ But $F\backslash S_A(D)$ deformation retracts to $G_A^*$ (similarly for $S_B(D)).$ Therefore, $\beta_1(S_A(D)) =\beta_0(G_A^*)-\chi(F)+1$ and $\beta_1(S_B(D)) =\beta_0(G_B^*)-\chi(F)+1$.

In conclusion, $\beta_1(S_A(D))+\beta_1(S_B(D)) = \beta_0(G_A^*)-\chi(F)+1+\beta_0(G_B^*)-\chi(F)+1\leq \beta_0(G^*)+|V(G^*)|-2\chi(F)+2=\beta_0(G^*)+\chi(F)-\chi(G)+c(\widehat{G})-2\chi(F)+2 = s(\widehat{G})-\chi(G)+c(\widehat{G})-\chi(F)+2$.
\end{proof}

\begin{lem}
$s(\widehat{G}) \leq \beta_1(G) +\chi(F)-1$.
\end{lem}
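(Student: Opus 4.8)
The plan is to compare the two invariants $s(\widehat{G})$ and $\beta_1(G)$ by relating both to the combinatorics of a cellular shadow $\widehat{G}$ on the closed surface $F$, using the Euler characteristic identities already established in the proof of Lemma \ref{lem:interpretingfirstbetti}. Recall from there that $\beta_1(\widehat{G}) = 1 - \chi(G) + c(\widehat{G})$ and that $|V(G^*)| = -\chi(G) + c(\widehat{G}) + \chi(F)$, where $G^*$ is the dual graph whose vertices are the disk regions of $F \setminus \widehat{G}$. Since $s(\widehat{G}) = \beta_0(G^*)$ and $G^*$ has $|V(G^*)|$ vertices, we trivially have $s(\widehat{G}) \le |V(G^*)|$; but this bound is too weak, so instead I would count more carefully. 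The key observation is that $G^*$ is connected whenever $\widehat{G}$ is cellularly embedded and connected — a path in $G^*$ between two region-vertices exists because one can walk from region to region across the edges of $\widehat{G}$, and cellularity guarantees every complementary component is a disk. More generally $\beta_0(G^*) \le \beta_0(\widehat{G})$, and under the connectedness hypothesis stated in the lemma's preamble, $s(\widehat{G}) = \beta_0(G^*) = 1$.

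Wait — if $s(\widehat{G})=1$ the inequality $s(\widehat{G}) \le \beta_1(G) + \chi(F) - 1$ would require $\beta_1(G) \ge 2 - \chi(F)$, which is false in general (take $G$ a trivial knotoid on $S^2$). So the dual graph $G^*$ must be the one built from the \emph{A-and-B-resolved} shadow rather than a naive region adjacency graph; concretely, $G^*$ has \emph{two} parallel edges per crossing and the subgraphs $G_A^*, G_B^*$ partition these edges, so $G^*$ can genuinely be disconnected. The correct approach is therefore: express $s(\widehat{G}) = \beta_0(G^*) = |V(G^*)| - |E(G^*)| + \beta_1(G^*) = |V(G^*)| - 2c(\widehat{G}) + \beta_1(G^*)$. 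Substituting $|V(G^*)| = -\chi(G) + c(\widehat{G}) + \chi(F)$ gives $s(\widehat{G}) = -\chi(G) - c(\widehat{G}) + \chi(F) + \beta_1(G^*)$. Now I would bound $\beta_1(G^*)$: the surface $F$ is obtained (up to homotopy) by gluing $G^*$ and $\widehat{G}$ along a graph, so a Mayer–Vietoris / cellular-chain-complex argument on $F = N(\widehat{G}) \cup N(G^*)$ yields $\beta_1(G^*) \le \beta_1(F) + \beta_1(\widehat{G}) = (2 - \chi(F) - \beta_0(F)\cdot 0 \ldots)$ — more precisely, since $F\setminus \widehat G$ deformation retracts onto $G^*$ and $\chi(F) = \chi(\widehat G) + \chi(G^*) \cdot(\text{correction})$, one reads off $\beta_1(G^*)$ directly from $\chi(G^*) = \beta_0(G^*) - \beta_1(G^*)$ together with $\chi(G^*) = |V(G^*)| - 2c(\widehat{G})$.

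Assembling: from $\chi(G^*) = |V(G^*)| - 2c(\widehat{G}) = -\chi(G) - c(\widehat{G}) + \chi(F)$ and $\beta_1(G^*) = \beta_0(G^*) - \chi(G^*) = s(\widehat{G}) + \chi(G) + c(\widehat{G}) - \chi(F)$, I need an independent upper bound on $\beta_1(G^*)$. The natural one is $\beta_1(G^*) \le \beta_1(F) = 2 - 2\beta_0(F) \cdot \ldots$; since $F$ is connected closed orientable, $\beta_1(F) = 2 - \chi(F)$, and because $G^*$ embeds in $F$ with complement containing the cellular graph $\widehat{G}$ (which carries $\beta_1(\widehat G) = 1 - \chi(G) + c(\widehat G)$ independent cycles, all of which must be killed in $F$), a sub-additivity argument gives $\beta_1(G^*) \le \beta_1(F) + \bigl(\text{something involving } \chi(G), c(\widehat G)\bigr)$ — and tracking constants should land exactly on $s(\widehat{G}) \le \beta_1(G) + \chi(F) - 1$ after using $\beta_1(G) = \beta_0(G) - \chi(G) = 1 - \chi(G)$ (for connected $G$).

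\textbf{Main obstacle.} The delicate point is the bound on $\beta_1(G^*)$: one must show that cycles in the dual graph $G^*$ are controlled by $\beta_1(F)$ together with the genus contribution of $\widehat G$, i.e. that $G^*$ cannot carry "too many" independent cycles beyond what $F$ and the resolved shadow allow. I expect this to follow cleanly from the long exact sequence of the pair $(F, \widehat G)$ or equivalently from a rank count in the cellular chain complex of $F$ (with $\widehat G$ as the $1$-skeleton and the complementary disks as $2$-cells), but getting the constants to match the claimed "$-1$" exactly — rather than "$+O(1)$" — will require care about connectedness of $G^*$ versus $\widehat G$ and about whether the all-A and all-B partition $E(G^*) = E(G_A^*) \sqcup E(G_B^*)$ interacts with cycle space; the generalization to disconnected $\widehat G$ flagged in the preamble (inserting $\beta_0(\widehat G)$) is a sanity check that the bookkeeping is right.
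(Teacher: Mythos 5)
There is a genuine gap, and it sits exactly where you flag it. Your Euler-characteristic bookkeeping is correct as far as it goes: with $|E(G^*)|=2c(\widehat G)$ and $|V(G^*)|=\chi(F)-\chi(G)+c(\widehat G)$ you get $s(\widehat G)=\beta_0(G^*)=\chi(F)-\chi(G)-c(\widehat G)+\beta_1(G^*)$, and since the target (for connected $G$) is $\beta_1(G)+\chi(F)-1=\chi(F)-\chi(G)$, the whole lemma reduces to the single inequality $\beta_1(G^*)\le c(\widehat G)$. You never prove this, and the route you propose for it cannot work: you try to dominate $\beta_1(G^*)$ by $\beta_1(F)=2-\chi(F)$ plus unspecified corrections, but the first Betti number of a graph embedded in a surface is not controlled by the genus of that surface (a planar graph can have arbitrarily large $\beta_1$; already the standard trefoil shadow on $S^2$ has $\beta_1(G^*)=3>\beta_1(S^2)=0$). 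A long exact sequence of the pair $(F,\widehat G)$ will not rescue this, because the cycles of $G^*$ that need to be counted are mostly null-homotopic in $F$. The "Main obstacle" paragraph is an accurate self-diagnosis: the constants are not going to "land exactly" without a new idea, and the two false starts earlier in the writeup (the too-weak bound $s(\widehat G)\le|V(G^*)|$, then the incorrect claim that $G^*$ is connected) are symptoms of the same missing ingredient.

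For comparison, the paper takes a completely different and much shorter route: induction on $c(\widehat G)$. The base case $c(\widehat G)=0$ is the face count $\beta_0(G^*)=|V(G^*)|=\chi(F)-\chi(G)=\beta_1(G)+\chi(F)-1$, which you essentially already have. For the inductive step one picks a crossing $x$ and observes that at least one of the two smoothings $G_A,G_B$ of the underlying abstract graph satisfies $\beta_1(G_A)\le\beta_1(G)$, while smoothing a crossing can only merge regions and delete dual edges, so $\beta_0(\widehat G^*)\le\beta_0(\widehat{G_A}^{\,*})$; the induction hypothesis applied to $\widehat{G_A}$ then closes the argument. If you want to salvage your global approach, the honest statement of what remains to be shown is precisely $\beta_1(G^*)\le c(\widehat G)$, and the cleanest way I know to prove that is again crossing-by-crossing --- at which point you have reinvented the paper's induction.
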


\begin{proof}
We will induct on $c(\widehat{G}).$ If $c(\widehat{G}) = 0$, then $s(\widehat{G}) = \beta_0(G^*) = \beta_1(G) +\chi(F)-1.$

Suppose now that the claim is true for $c(\widehat{G})=k.$ Let $\widehat{G}$ be a regular projection of $G$ with $c(\widehat{G})=k+1.$ Choose a crossing $x$ for $\widehat{G}$ and call the graph $G_A$ and $G_B$ to be the result of performing the $A$-smoothing and $B$-smoothing at $x$. If $\beta_0(G_A)>\beta(G)$ and $\beta_0(G_B)>\beta(G)$, then $\beta_1(G)=\beta_1(G_A)=\beta_1(G_B),$ which is a contradiction.

This means that without loss of generality, $\beta_1(G_A)\leq \beta_1(G)$. This implies that $\beta_0(\widehat{G}^*)\leq \beta_0(\widehat{G_A}^*)$. By induction, $s(\widehat{G}) = \beta_0(\widehat{G}^*)\leq \beta_0(\widehat{G_A}^*)\leq \beta_1(G)+\chi(F)-1.$
\end{proof}
These preliminary lemmas imply the following theorem.

\begin{thm}\label{thm:crossingbound}
    $c(G)\geq \frac{1}{3}(span(R(G))-2\beta_1(G))$.
\end{thm}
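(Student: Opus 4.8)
The plan is to combine the two spanning inequalities already in hand. First I would recall that in Section~\ref{section:Yamada} we established
$$\operatorname{span}(R(G)) = \operatorname{max\,deg}(R) - \operatorname{min\,deg}(R) \leq 2c(D) + \beta_1(S_A(G)) + \beta_1(S_B(G))$$
for any diagram $D$ of $G$, as a consequence of Lemma~\ref{lem:degreeinequality}. So it suffices to control $\beta_1(S_A(G)) + \beta_1(S_B(G))$ by something involving only $c(D)$ and $\beta_1(G)$. This is exactly what Lemma~\ref{lem:interpretingfirstbetti} and the subsequent lemma provide, once we pass to a cellularly embedded representative on a surface $F$.

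The key chain of steps I would carry out: choose a diagram $D$ realizing $c(G) = c(D)$ and, using Proposition~\ref{prop:virtualequiv} and the thickened-surface interpretation, regard $D$ as cellularly embedded in a closed surface $F$ with $\widehat{G}$ its shadow. Then Lemma~\ref{lem:interpretingfirstbetti} gives
$$\beta_1(S_A(D)) + \beta_1(S_B(D)) \leq s(\widehat{G}) - \chi(G) + c(D) - \chi(F) + 2,$$
and the following lemma gives $s(\widehat{G}) \leq \beta_1(G) + \chi(F) - 1$. Substituting, the $\chi(F)$ terms cancel and one gets
$$\beta_1(S_A(D)) + \beta_1(S_B(D)) \leq \beta_1(G) + \chi(F) - 1 - \chi(G) + c(D) - \chi(F) + 2 = \beta_1(G) - \chi(G) + c(D) + 1.$$
Since $\beta_1(G) = \beta_0(G) - \chi(G)$, for a connected graph $-\chi(G) = \beta_1(G) - 1$, so the right side becomes $2\beta_1(G) + c(D)$. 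Plugging this into the span inequality yields $\operatorname{span}(R(G)) \leq 2c(D) + 2\beta_1(G) + c(D) = 3c(D) + 2\beta_1(G)$, and rearranging gives $c(G) = c(D) \geq \tfrac{1}{3}(\operatorname{span}(R(G)) - 2\beta_1(G))$, as desired.

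The main obstacle I anticipate is bookkeeping around the disconnected case and around which diagram to apply the surface lemmas to. The preliminary lemmas are stated for connected $\widehat{G}$ cellularly embedded, and the excerpt warns that $\beta_0(\widehat{G})$ enters the inequality in general; I would either restrict to the connected case for the clean statement or carry the extra $\beta_0$ terms through and check they do not spoil the coefficient $\tfrac{1}{3}$. One also needs to be careful that the span inequality from Lemma~\ref{lem:degreeinequality} is applied to the \emph{same} diagram $D$ that is being cellularly embedded, and that the crossing number $c(D)$ appearing in both inequalities refers to the classical crossings only — this is consistent, since the resolutions in the Yamada state sum and the shadow $\widehat{G}$ both only see classical crossings. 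A final detail is verifying that passing to a minimal-crossing diagram is legitimate: the inequality holds for every diagram, so in particular for one realizing $c(G)$, which is all we need.
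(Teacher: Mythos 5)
Your proposal is correct and follows essentially the same route as the paper: chain the span bound from Lemma~\ref{lem:degreeinequality} with Lemma~\ref{lem:interpretingfirstbetti} and the bound $s(\widehat{G})\leq\beta_1(G)+\chi(F)-1$, cancel the $\chi(F)$ terms, and use $-\chi(G)=\beta_1(G)-1$ for connected $G$ to arrive at $\operatorname{span}(R(G))\leq 3c(G)+2\beta_1(G)$. Your added care about applying both lemmas to the same minimal-crossing diagram and about the connectedness hypothesis matches the caveats the paper itself states before the preliminary lemmas.
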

\begin{proof}
    By Lemma \ref{lem:degreeinequality}, and Lemma \ref{lem:interpretingfirstbetti}, span$(R(G)) \leq 2c(G)+\beta_1(S_A(D))+\beta_1(S_B(D)) \leq 2c(G)+s(G)-\chi(G)+c(G)-\chi(F)+2\leq 3c(G)+\beta_1(G)+\chi(F)-\chi(F)+2-1-\chi(G)=3c(G)+2\beta_1(G)$.
\end{proof}
We have now arrived at a useful result regarding adequate virtual graphoids.

\begin{cor}
    If $G$ is a virtual graphoid whose virtual closure is adequate, then $G$ is nontrivial.
\end{cor}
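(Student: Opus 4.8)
The plan is to deduce nontriviality of $G$ directly from the crossing-number estimate in Theorem \ref{thm:crossingbound}. First I would recall what ``trivial'' should mean for a virtual graphoid: the trivial graphoid is represented by a diagram with no crossings whose underlying graph is whatever abstract graph $G$ has, realized in the simplest possible way (so in particular $c(G_{\mathrm{triv}}) = 0$). The essential observation is that the virtual closure of the trivial graphoid is a correspondingly trivial virtual spatial graph, and the Yamada-type polynomial $R$ of a trivial virtual spatial graph is a fixed, explicitly computable element (a product of the bouquet/trivial-cycle evaluations given by relation (Y4) and the normalization $R(K) = A + 1 + A^{-1}$, $R(\emptyset)=1$, $R(\text{vertex})=-1$). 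Crucially, since such a diagram has no crossings, $S_A = S_B$ equals the diagram itself, so $\beta_1(S_A) = \beta_1(S_B) = \beta_1(G)$ (here $\beta_1$ of the abstract graph), and Lemma \ref{lem:degreeinequality} then forces $\mathrm{span}(R(G_{\mathrm{triv}})) \le 2\beta_1(G)$.

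Next I would invoke the hypothesis that the virtual closure of $G$ is adequate. By the Theorem preceding Lemma \ref{lem:interpretingfirstbetti} (the one asserting $\max\deg R - \min\deg R = 2c(D) + \beta_1(S_A) + \beta_1(S_B)$ for adequate diagrams), combined with Theorem \ref{thm:crossingbound}, we get $\mathrm{span}(R(G)) \ge 2c(G) + \beta_1(S_A) + \beta_1(S_B)$ in a way that certifies a genuinely large span whenever $c(G) > 0$. More precisely, the contrapositive route I would actually run: suppose $G$ were trivial. Then $G$ has a crossingless diagram, so $c(G) = 0$, and by Theorem \ref{thm:crossingbound}, $0 = c(G) \ge \tfrac13\bigl(\mathrm{span}(R(G)) - 2\beta_1(G)\bigr)$, i.e. $\mathrm{span}(R(G)) \le 2\beta_1(G)$. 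On the other hand, adequacy of the virtual closure of $G$ and the equality theorem give $\mathrm{span}(R(G)) = 2c(D) + \beta_1(S_A(G)) + \beta_1(S_B(G))$ for an adequate diagram $D$; since an adequate diagram is not the crossingless one (a crossingless diagram is vacuously, but also pathologically, not what produces the strict inequality — one must check $c(D) \ge 1$ for any adequate diagram, or more simply that the span strictly exceeds $2\beta_1(G)$), we reach $\mathrm{span}(R(G)) > 2\beta_1(G)$, contradicting the previous line.

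The step I expect to be the main obstacle is pinning down precisely why an adequate virtual spatial graph has $\mathrm{span}(R) > 2\beta_1(G)$ — equivalently, why the adequate diagram $D$ realizing the equality must have at least one crossing, and why $\beta_1(S_A(D)) + \beta_1(S_B(D)) + 2c(D) > 2\beta_1(G)$ rather than merely $\ge$. Here one has to use the structure of adequacy: if $c(D) = 0$ the notions $F_A, F_B, J_A, J_B$ degenerate and adequacy is either vacuous or reduces to a condition that cannot produce a diagram distinct from the trivial one, so any bona fide adequate diagram has $c(D) \ge 1$; then the inequality from Lemma \ref{lem:interpretingfirstbetti} chain shows $\beta_1(S_A) + \beta_1(S_B) \ge 2\beta_1(G) - 2c(D) + (\text{correction})$, and feeding $c(D)\ge 1$ through Theorem \ref{thm:crossingbound}'s arithmetic yields the strict gap. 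I would handle this by arguing directly: $\mathrm{span}(R(G)) = 2c(D) + \beta_1(S_A) + \beta_1(S_B) \ge 2c(D) \ge 2$ when $\beta_1(G) = 0$, and more generally $2c(D) + \beta_1(S_A) + \beta_1(S_B) > 2\beta_1(G)$ because each X- or A-resolution behavior forces $\beta_1(S_A)$ to be comparable to $\beta_1(G)$ up to the crossing count; combining with $c(D)\ge 1$ closes the gap. Once that strict inequality is in hand, the contradiction with $c(G)=0$ is immediate and the corollary follows.
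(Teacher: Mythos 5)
Your overall strategy is the same as the paper's: assume $G$ has a crossingless diagram, feed $c(G)=0$ into Theorem \ref{thm:crossingbound} to get $\mathrm{span}(R(G))\leq 2\beta_1(G)$, and contradict this with the adequacy equality $\mathrm{span}(R(G))=2c(D)+\beta_1(S_A(D))+\beta_1(S_B(D))$. The problem is the middle step, which you yourself flag as ``the main obstacle'': you never actually prove that $2c(D)+\beta_1(S_A(D))+\beta_1(S_B(D))>2\beta_1(G)$. The phrases ``each X- or A-resolution behavior forces $\beta_1(S_A)$ to be comparable to $\beta_1(G)$ up to the crossing count'' and ``$\beta_1(S_A)+\beta_1(S_B)\geq 2\beta_1(G)-2c(D)+(\text{correction})$'' are not arguments, and the second inequality, if taken literally, is too weak: combined with the $2c(D)$ already present it only yields $\mathrm{span}\geq 2\beta_1(G)+(\text{correction})$ with an unspecified correction, so the strict gap is never certified.

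The concrete fact that closes the gap (and is what the paper uses) is a comparison through the shadow $\widehat{G}$: since $\widehat{G}$ is obtained from $S_A(D)$ by adding $c(D)$ edges, one has $\beta_1(S_A(D))+c(D)\geq\beta_1(\widehat{G})$, and likewise $\beta_1(S_B(D))+c(D)\geq\beta_1(\widehat{G})$; on the other hand $\beta_1(\widehat{G})=\beta_1(G)+c(D)$ (each crossing of the projection raises the first Betti number by one), so $\beta_1(\widehat{G})>\beta_1(G)$ as soon as the adequate diagram has a crossing. Adding the two inequalities gives $2c(D)+\beta_1(S_A(D))+\beta_1(S_B(D))\geq 2\beta_1(\widehat{G})>2\beta_1(G)$, which is exactly the strict inequality you need. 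Note that this still rests on $c(D)\geq 1$ for the adequate diagram $D$ — a point the paper also passes over quickly with the assertion $\beta_1(\widehat{G})>\beta_1(G)$ — so your instinct that the degenerate case $c(D)=0$ must be excluded is correct; but identifying a needed fact is not the same as establishing it, and without the $\widehat{G}$ comparison your proof does not go through. The first paragraph of your proposal, computing the polynomial of the trivial closure via (Y4), is not needed: Theorem \ref{thm:crossingbound} applied with $c(G)=0$ already gives the bound $\mathrm{span}(R(G))\leq 2\beta_1(G)$ directly.
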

\begin{proof}
    If $G$ is adequate, then $\beta_1(\widehat{G})>\beta_1(G)$. Furthermore, since $\widehat{G}$ is obtained from $S_A(D)$ by adding $c(D)$ edges, their Betti numbers are related by $\beta_1(S_A(G))+c(G)\geq \beta_1(\widehat{G})$. Similarly, $\beta_1(S_B(G))+c(G)\geq \beta_1(\widehat{G})$. By definition of adequacy, $deg(R(G))=2c(G)+\beta_1(S_A(G))+\beta_1(S_B(G)) > 2\beta_1(G).$ If $G$ admits a diagram with no classical crossings, then by Theorem \ref{thm:crossingbound}, $deg(R(G))\leq 2\beta_1(G),$ which is a contradiction.
\end{proof}

It is of interest to determine families of graphoids, whose crossing number is realized on a certain type of diagrams. Let $o(G)$ be the number vertices with odd degree. In the theorem below, we assume $\widehat{G}$ is connected. It can be easily generalized to reflect the situation where $\beta_0(\widehat{G})>1.$
\begin{thm}

    Let $D$ be a cellularly embedded graph diagram of $G$ on a closed surface $\Sigma$. Suppose that there is a diagram $D'$ of an Eulerian graph $G'$ obtained by adding $o(G)/2$ edges without introducing new crossing points or vertices such that the over/under crossing decorations of $D'$ come from a checkerboard coloring of $D'$. Then, $\beta_1(S_A(D))+\beta_1(S_B(D)) \geq c(D)-\chi(G)-\frac{o(G)}{2}+\chi(\Sigma)$. \label{thm:checkerboard}
\end{thm}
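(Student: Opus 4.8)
The plan is to mimic the reverse-inequality part of the Motohashi–Ohyama–Taniyama-style argument, but now run through the Eulerian graph $G'$ rather than $G$ itself, so that the checkerboard coloring furnishes the two extreme states directly. First I would observe that because $D'$ is obtained from $D$ by adding $o(G)/2$ edges without new crossings or vertices, every crossing of $D'$ is a crossing of $D$ and vice versa, so $c(D')=c(D)$; moreover $G'$ Eulerian means every vertex of $G'$ has even degree, and $\chi(G')=\chi(G)-o(G)/2$ since we added $o(G)/2$ edges and no vertices. The key structural input is the checkerboard coloring: it singles out a specific state $S'_A$ of $D'$ (the all-$A$ state relative to the coloring, i.e.\ at each crossing take the resolution separating the two black regions) and the complementary all-$B$ state $S'_B$. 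For a checkerboard-colored diagram the all-$A$ state $S'_A$ deformation retracts onto (a graph homotopy equivalent to) the ``black'' sub-complex, and dually $S'_B$ onto the ``white'' one; in particular $S'_A$ and $S'_B$ have \emph{no cut edges}, which is exactly the property that was used in the adequacy theorem to guarantee nonvanishing extreme coefficients.

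Next I would relate the Betti numbers of $S_A(D),S_B(D)$ to those of $S'_A(D'),S'_B(D')$. Resolving a crossing of $D'$ the ``$A$-way'' versus the ``$B$-way'' is the same local operation as in $D$, and deleting the $o(G)/2$ added edges from any state of $D'$ produces a state of $D$; deleting an edge can only lower $\beta_1$ by at most one per edge and lower $\beta_0$ correspondingly, so $\beta_1(S_A(D))\ge \beta_1(S'_A(D'))-o(G)/2+(\text{something})$. Rather than track this edge-by-edge, the cleaner route is to run the Euler-characteristic bookkeeping used in Lemma~\ref{lem:interpretingfirstbetti} but in the reverse direction: for a \emph{checkerboard}-colored diagram on $\Sigma$, the regions of $\Sigma\setminus \widehat{G'}$ are two-colored, $S'_A$ is carried by the closure of the black regions and $S'_B$ by the closure of the white regions, and
\[
\beta_1(S'_A(D'))+\beta_1(S'_B(D')) \;=\; c(D') - \chi(G') + \chi(\Sigma),
\]
which one gets from $\chi(S'_A)+\chi(S'_B)=\chi(\widehat{G'})+(\text{number of regions})$ together with $\chi(\widehat{G'})=\chi(G')-c(D')$ and the region count $=\chi(\Sigma)+\beta_1(\widehat{G'})$ and the fact that, for a checkerboard state, $\beta_0(S'_A)+\beta_0(S'_B)$ equals the number of regions. (For the classical planar case this is the familiar $|s_A|+|s_B|=c+2$.) I would prove this equality as a short lemma; it is the technical heart.

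Finally I would assemble the inequality. Since $S'_A(D')$ is obtained from $S_A(D)$ by adding exactly $o(G)/2$ edges (the same edges added to pass from $D$ to $D'$), and adding an edge changes $\beta_1$ by $0$ or $1$, we get $\beta_1(S_A(D)) \ge \beta_1(S'_A(D')) - o(G)/2$, and likewise for $B$. Substituting $c(D')=c(D)$ and $\chi(G')=\chi(G)-o(G)/2$ into the displayed equality and then into these two inequalities yields
\[
\beta_1(S_A(D))+\beta_1(S_B(D)) \;\ge\; c(D) - \chi(G) + o(G)/2 + \chi(\Sigma) - o(G),
\]
which is $c(D)-\chi(G)-o(G)/2+\chi(\Sigma)$, as claimed. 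I expect the main obstacle to be the bookkeeping lemma in the middle paragraph: one must be careful that ``$S'_A$ is carried by the black regions'' is genuinely a deformation retraction (so that $\beta_0,\beta_1$ can be read off the region structure) even when $\widehat{G'}$ has vertices of high valence and the surface $\Sigma$ has positive genus, and one must check that the two added half-edges per odd vertex can be colored consistently so that $G'$ really admits the checkerboard decoration as hypothesized — but this last point is exactly what the statement assumes, so the remaining work is the Euler-characteristic count, which is routine once set up correctly.
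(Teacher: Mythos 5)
Your proposal is correct and follows essentially the same route as the paper's proof: bound $\beta_1(S_A(D'))+\beta_1(S_B(D'))$ from below by the total number of checkerboard regions, count those regions via Euler characteristic as $c(D')-\chi(G')+\chi(\Sigma)$, and lose at most $o(G)$ in total when deleting the $o(G)/2$ added edges to return to states of $D$. The only difference is that you assert an exact equality in the middle step, whereas the paper only claims (and only needs) the lower bound of $\beta_1(S_A(D'))$ and $\beta_1(S_B(D'))$ by the numbers of white and black regions respectively.
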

\begin{proof}
We have that $\beta_1(S_A(D'))$ is at least the number of white regions of the checkerboard coloring. Similarly, $\beta_1(S_B(D'))$ is at least the number of black regions of the checkerboard coloring. From Euler characteristic calculations, the total number of regions is $c(\widehat{G'})+\chi(\Sigma)-\chi(G').$ It follows that $\beta_1(S_A(D))+\beta_1(S_B(D))\geq\beta_1(S_A(D'))+\beta_1(S_B(D'))-o(G)\geq c(D)-\chi(G)+\frac{o(G)}{2}+\chi(\Sigma)-o(G)$. The final inequality holds because $c(D)=c(D')$ and $\chi(G')=\chi(G)-\frac{o(G)}{2}$ by construction.     
\end{proof}
As a consequence, we get a condition on a type of diagrams that realizes the crossing number.
\begin{cor}
    Suppose that $D$ is an adequate virtual spatial graph diagram. Suppose also that $D$ satisfies the property stated in Theorem \ref{thm:checkerboard}. Then, $c(D)-c(G)\leq \frac{1}{3}(s(G)+\frac{o(G)}{2}-2\chi(\Sigma)+2).$
\end{cor}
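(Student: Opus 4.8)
The plan is to combine the two degree inequalities that were just established for an adequate diagram with the structural hypothesis coming from Theorem \ref{thm:checkerboard}. Concretely, I would start from the identity $span(R(G)) = 2c(D) + \beta_1(S_A(D)) + \beta_1(S_B(D))$, which holds because $D$ is adequate. On the one hand, Theorem \ref{thm:crossingbound} gives $span(R(G)) \leq 3c(G) + 2\beta_1(G)$, so $2c(D) + \beta_1(S_A(D)) + \beta_1(S_B(D)) \leq 3c(G) + 2\beta_1(G)$. On the other hand, since $D$ satisfies the checkerboard-type hypothesis, Theorem \ref{thm:checkerboard} furnishes the lower bound $\beta_1(S_A(D)) + \beta_1(S_B(D)) \geq c(D) - \chi(G) - \tfrac{o(G)}{2} + \chi(\Sigma)$.

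The next step is to substitute the Theorem \ref{thm:checkerboard} lower bound into the left-hand side of the span inequality: this yields
\begin{align*}
2c(D) + c(D) - \chi(G) - \tfrac{o(G)}{2} + \chi(\Sigma) \leq 2c(D) + \beta_1(S_A(D)) + \beta_1(S_B(D)) \leq 3c(G) + 2\beta_1(G),
\end{align*}
hence $3c(D) \leq 3c(G) + 2\beta_1(G) + \chi(G) + \tfrac{o(G)}{2} - \chi(\Sigma)$, i.e.
\begin{align*}
c(D) - c(G) \leq \tfrac{1}{3}\big(2\beta_1(G) + \chi(G) + \tfrac{o(G)}{2} - \chi(\Sigma)\big).
\end{align*}
To reach the stated form I then rewrite $2\beta_1(G) + \chi(G)$ using the lemma $s(G) \leq \beta_1(G) + \chi(F) - 1$ applied on the surface $\Sigma$ (so $s(G) \leq \beta_1(G) + \chi(\Sigma) - 1$), together with the elementary relation $\beta_1(G) = \beta_0(G) - \chi(G)$; for a connected shadow this gives $2\beta_1(G) + \chi(G) = \beta_1(G) + \beta_0(G) = \beta_1(G) + 1 \leq s(G) - \chi(\Sigma) + 2$. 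Plugging this in yields $c(D) - c(G) \leq \tfrac{1}{3}\big(s(G) - \chi(\Sigma) + 2 + \tfrac{o(G)}{2} - \chi(\Sigma)\big) = \tfrac{1}{3}\big(s(G) + \tfrac{o(G)}{2} - 2\chi(\Sigma) + 2\big)$, which is exactly the claimed bound.

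The main obstacle I anticipate is bookkeeping consistency between the various Euler-characteristic conventions: the excerpt's lemmas phrase $s(G) \leq \beta_1(G) + \chi(F) - 1$ for a connected shadow on a closed surface $F$, and I must make sure that the $\chi(\Sigma)$ appearing in Theorem \ref{thm:checkerboard} is the same quantity, and that the identity $\beta_1(G) = \beta_0(G) - \chi(G)$ is being applied to $G$ (the spatial graph underlying $D$) and not to a shadow or a resolution. A secondary subtlety is that $s(G)$ is defined as a \emph{maximum} over shadows, whereas the concrete diagram $D$ gives one particular shadow $\widehat{G}$ with $s(\widehat{G}) \leq s(G)$; since the chain of inequalities only ever needs an upper bound on that particular shadow's quantity, the maximum causes no trouble — one simply invokes $s(\widehat{G}) \leq s(G)$ at the appropriate step. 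Finally, if $\widehat{G}$ has several connected components, the constant $2$ in the bound is replaced by $2\beta_0(\widehat{G})$, exactly as flagged before Theorem \ref{thm:checkerboard}; I would state the corollary in the connected case and remark on this generalization.
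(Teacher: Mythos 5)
Your first half is exactly the paper's strategy: use adequacy to get $\mathrm{span}\,R(D) = 2c(D) + \beta_1(S_A(D)) + \beta_1(S_B(D))$, bound this below via Theorem \ref{thm:checkerboard} by $3c(D) - \chi(G) - \tfrac{o(G)}{2} + \chi(\Sigma)$, and play it off against an upper bound on the span. The problem is in your final conversion step. You invoke the lemma $s(\widehat{G}) \leq \beta_1(G) + \chi(F) - 1$ to claim $\beta_1(G) + 1 \leq s(G) - \chi(\Sigma) + 2$, but that lemma runs in the \emph{opposite} direction: it gives $s(G) - \chi(\Sigma) + 1 \leq \beta_1(G)$, hence $\beta_1(G) + 1 \geq s(G) - \chi(\Sigma) + 2$. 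So the intermediate bound you legitimately obtain, namely $c(D) - c(G) \leq \tfrac{1}{3}\bigl(2\beta_1(G) + \chi(G) + \tfrac{o(G)}{2} - \chi(\Sigma)\bigr) = \tfrac{1}{3}\bigl(\beta_1(G) + 1 + \tfrac{o(G)}{2} - \chi(\Sigma)\bigr)$, is \emph{weaker} than the stated corollary (its right-hand side dominates the corollary's), and no correct application of that lemma will shrink it back down. The error traces to your choice of upper bound: Theorem \ref{thm:crossingbound}'s form $\mathrm{span}\,R \leq 3c(G) + 2\beta_1(G)$ was itself obtained by discarding information via that same lemma, and you cannot recover what was discarded.

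The repair is to use the sharper upper bound one step earlier in the chain: combining Lemma \ref{lem:degreeinequality} with Lemma \ref{lem:interpretingfirstbetti} (applied to a minimal-crossing diagram) gives $\mathrm{span}\,R(G) \leq 3c(G) + s(G) - \chi(G) - \chi(\Sigma) + 2$ directly, without ever passing to $\beta_1(G)$. Comparing this with your lower bound $3c(D) - \chi(G) - \tfrac{o(G)}{2} + \chi(\Sigma)$, the $-\chi(G)$ terms cancel and you get $3\bigl(c(D) - c(G)\bigr) \leq s(G) + \tfrac{o(G)}{2} - 2\chi(\Sigma) + 2$, which is the claim. This is precisely the route the paper takes. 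Your closing remarks about $s(\widehat{G}) \leq s(G)$ and the connectedness convention are fine and apply equally to the corrected argument.
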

\begin{proof}
    Since $D$ is adequate, 
    \begin{align*}
         spanR(D) &= 2c(D) +\beta_1(S_A)+\beta_1(S_B) \\ &\geq 3c(D)-\chi(G)-\frac{o(G)}{2}+\chi(\Sigma)
    \end{align*}
    
  . The second inequality comes from the estimate in Theorem \ref{thm:checkerboard}. On the other hand, there is an upper estimate for span$R(G)$ from Lemma \ref{lem:interpretingfirstbetti}. Namely, 
  \begin{align*}
         spanR(D) &\leq 3c(G) -\chi(G)+s(G)-\chi(\Sigma)+2
    \end{align*}
    Moving $c(G)$ and $c(D)$ to the same side, we get that $c(D)-c(G)\leq \frac{1}{3}(s(G)+\frac{o(G)}{2}-2\chi(\Sigma)+2)$.
\end{proof}
\section{Constructing non-classical graphoids.}\label{section:pure}

Deng, Kauffman, and Jin developed a Yamada-type polynomial denoted by $R(G;A,1)$ for virtual spatial graphs \cite{Deng}. In this section, we will use $R(G;A,1)$ to show that certain virtual graphoids cannot be presented as a diagram without virtual crossings. We accomplish this by showing that $R(G) \neq R(G;A,1)$.

\begin{defn}
    Let $D$ be a virtual graphoid diagram. The \textbf{generalized Yamada polynomial} $R(D;A,1)$ satisfies the following relations:
    \begin{enumerate}
        \item The relation ($Y_1$) in Figure \ref{yamadaskein}.
        \item The relation ($Y_2^*$) in Figure \ref{fig:virtualyamskein}. The edge with a slash through it is called a \textbf{marked edge}.
        \item If each edge of $D$ is a marked edge, then $R(D) = (-1)^{bc(D)}(A+A^{-1}+2)^{bc(D)-k(D)}$, where $k(D)=\beta_0(D)$ and $bc(D)$ is the number of boundary components of $D$.
    \end{enumerate}\label{def:DengDef}
\end{defn}
If $D$ is a single vertex, then $R(D;A,1) = -1$. Also, $R(\emptyset)=1.$
\begin{figure}[ht!]

\centering
\includegraphics[width=9cm]{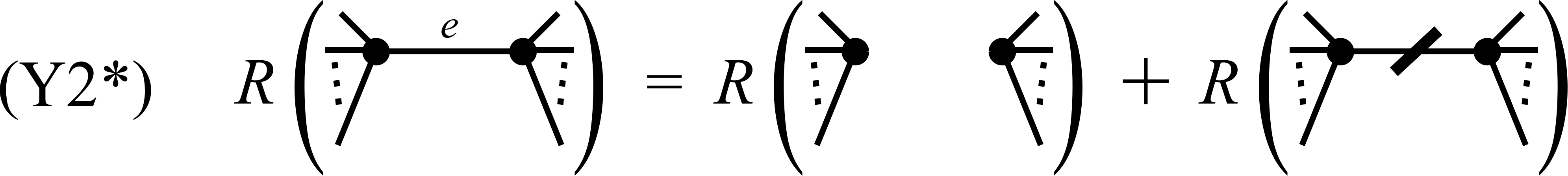}
\caption{A deletion-contraction relation for $R(G;A,1)$.}\label{fig:virtualyamskein}
\end{figure}

Again, we are not allowed to perform the contraction-deletion on the edges adjacent to the tail/head. Next, we gather some useful consequences of the definition of $R(G;A,1)$ that was also presented in \cite{Deng}. 

\begin{enumerate}
    \item $R(K;A,1) =-A-A^{-1}-1$, where $K$ is the trivial knotoid.
    \item If a non-loop edge $e$ does not intersect other edges at classical or virtual crossings, then (Y2) is the same as (Y2*).
\end{enumerate}
An important consequence coming from analyzing the state sum formula that we will use is the following.

\begin{cor}[Corollary 3.17 of \cite{Deng}]
    Let $D$ be a classical spatial graph diagram, then $R(D;A,1)$ coincides with $R(D)$.
\end{cor}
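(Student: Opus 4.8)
The plan is to show that, restricted to classical spatial graph diagrams, the polynomial $R(\cdot;A,1)$ obeys exactly the defining relations and base evaluations of the Yamada polynomial $R(\cdot)$, and then to invoke the well-definedness of $R$. Since the crossing relation $(Y_1)$ is literally imposed on $R(\cdot;A,1)$ (Definition~\ref{def:DengDef}(1)), the two invariants treat every classical crossing identically; applying $(Y_1)$ in turn at each crossing of a diagram $D$, it suffices to prove $R(D;A,1)=R(D)$ when $D$ has no classical crossings, i.e.\ when $D$ is a planar graph diagram. Note that for a genuine spatial graph there is no head or tail, so the prohibition against contracting or deleting an edge incident to the head/tail never intervenes in what follows.

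On a crossing-free classical diagram every non-loop edge $e$ meets no other edge at a classical or virtual crossing, so by the observation recorded just after Definition~\ref{def:DengDef} the relation $(Y_2^*)$ agrees with $(Y_2)$ on $e$. Applying these (now common) deletion--contraction moves repeatedly eliminates all non-loop edges, so both $R(D)$ and $R(D;A,1)$ get expressed, through the very same sequence of moves, as one and the same $\Z[A^{\pm1}]$-linear combination of diagrams, each of which is a disjoint union of bouquets $\dot B_n$ and isolated vertices. Both invariants are multiplicative under disjoint union and agree on the empty diagram ($1$) and on a single vertex ($-1$), so the whole statement reduces to checking that $R(\dot B_n)=R(\dot B_n;A,1)$ for a single bouquet.

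This bouquet comparison is the step I expect to be the real work. Here one must match the recursive $(Y_4)$-evaluation of $R(\dot B_n)$ — equivalently, since there is a unique state, the value $H(\dot B_n)$ of the state sum — against the value of $R(\dot B_n;A,1)$ produced by the all-marked-edge formula of Definition~\ref{def:DengDef}(3). Concretely, one identifies for the planar ribbon graph underlying $\dot B_n$ its number of boundary components $bc$ and of connected components $k$, verifies the combinatorial identity $bc-k=\beta_1(\dot B_n)=n$, and then reconciles the signs and the powers of $(A+A^{-1}+2)$ coming out of the two normalizations. A convenient way to organize this is to establish the identity first for the elementary pieces — a single loop $\dot B_1$ (isotopic to a bare circle) and a single edge joining two vertices — and then bootstrap to arbitrary bouquets by induction on $n$, using the now-common deletion--contraction relation together with the loop rule of Yamada's recursion. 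With this base-case agreement in hand, the uniqueness of the function satisfying $(Y_1)$--$(Y_4)$ with the stated normalizations yields $R(D;A,1)=R(D)$ for every classical spatial graph diagram $D$.
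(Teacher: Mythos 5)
The paper does not prove this statement at all: it is imported verbatim as Corollary~3.17 of \cite{Deng}, so there is no in-paper argument to compare yours against. Judged on its own, your axiomatic plan (show $R(\cdot;A,1)$ satisfies Yamada's defining relations on classical diagrams, then invoke uniqueness) is a reasonable shape, and the first two reductions --- resolving all classical crossings via the shared relation $(Y_1)$, then using the recorded fact that $(Y_2^*)$ agrees with $(Y_2)$ on non-loop edges that meet no crossings --- are fine. But the step you explicitly defer as ``the real work'' is where all the content of the corollary lives, and as outlined it does not close.

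Concretely: your plan is to evaluate a bouquet $\dot B_n$ on the $R(\cdot;A,1)$ side by marking every loop and applying rule (3) of Definition~\ref{def:DengDef}, then to ``reconcile signs and powers'' against $(Y_4)$. Try $n=1$. On the Yamada side the unique state gives $H(\dot B_1)=(-1)(-A-2-A^{-1})+(-1)=A+1+A^{-1}$. On the other side, the planar ribbon neighborhood of a vertex with one untwisted loop is an annulus, so $bc=2$, $k=1$, and rule (3) gives $(-1)^{2}(A+A^{-1}+2)^{1}=A+2+A^{-1}$. These are different polynomials, and the discrepancy $(A+1+A^{-1})^n$ versus $(A+A^{-1}+2)^n$ persists for all $n\geq 1$. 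So ``mark all loops and apply rule (3)'' cannot be the correct way to process a crossing-free loop in the $R(\cdot;A,1)$ calculus; the agreement asserted by the corollary must be mediated either by a loop rule hidden in $(Y_2^*)$ or by the state-sum comparison that Deng and Kauffman actually carry out, neither of which appears in your argument. The paper's own list of consequences already signals that the normalizations are not interchangeable ($R(K;A,1)=-A-A^{-1}-1$ while $R(K)=A+1+A^{-1}$ for the trivial knotoid), so the reconciliation genuinely has to be done, not assumed. Two smaller issues: you invoke multiplicativity of $R(\cdot;A,1)$ under disjoint union, which is not among its stated axioms, and the identity $bc-k=\beta_1$ depends on the embedding (it fails off the plane), so even the exponent bookkeeping needs the classicality hypothesis made explicit.
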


\begin{thm}
    The graphoids in Figure \ref{fig:pure} are pure graphoids, where $T$ is a tangle.
\end{thm}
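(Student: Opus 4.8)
The plan is to distinguish the virtual graphoid $G$ in Figure \ref{fig:pure} from any of its hypothetical classical presentations by comparing the two Yamada-type invariants $R(G)$ and $R(G;A,1)$. Recall from Corollary (3.17 of \cite{Deng}) that if $G$ admitted a diagram with no virtual crossings, then $R(G;A,1)$ would coincide with $R(G)$; hence it suffices to exhibit a single evaluation at which the two polynomials differ. Concretely, I would first fix a convenient diagram $D$ for $G$ in which the tangle $T$ appears in exactly one spot, and set up the state-sum expansions of both $R(D)$ and $R(D;A,1)$ relative to that diagram, keeping the contribution of $T$ as a formal parameter (or specializing $T$ to the simplest nontrivial choice, e.g. a single crossing or the identity tangle, as the figure dictates).

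The key computational step is to run the skein relations (Y1)--(Y4) for $R$ and the relations (Y1), (Y2$^*$), and clause (3) of Definition \ref{def:DengDef} for $R(\cdot;A,1)$ on $D$, being careful \emph{not} to apply contraction--deletion on the edges adjacent to the head and tail. The point of difference is the virtual crossing: under $R$, a virtual crossing is handled via the virtual closure and contributes through the flow-polynomial factor $H(S)$ with the substitution $(-A-2-A^{-1})^{\beta_1}$, whereas under $R(\cdot;A,1)$ the marked-edge mechanism of (Y2$^*$) forces the boundary-component count $bc(D)$ to enter, and these two bookkeeping schemes disagree precisely when a virtual crossing is essential (increases $\beta_1$ of the associated state). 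I would isolate the lowest- or highest-degree term in $A$ where this discrepancy surfaces; because the forbidden contraction near the endpoints prevents the diagram from being simplified to a classical one, that extreme term survives. Comparing coefficients there gives $R(G)\neq R(G;A,1)$.

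Finally, I would invoke the contrapositive of Corollary (3.17 of \cite{Deng}): since $R(G)\neq R(G;A,1)$, the graphoid $G$ cannot be presented by a classical diagram, i.e.\ it is not classical; combined with the argument of the Lemma in Section \ref{sec:closures} (that $\Omega_-$ moves let one push the head and tail into a common region only in the classical, virtual-free case), one concludes that $G$ is pure in the sense of the definition. The main obstacle I anticipate is purely combinatorial stamina: carrying the state sum far enough to pin down one extreme coefficient of each polynomial without collapsing the whole expansion, and correctly tracking how $\beta_0$, $\beta_1$, and $bc$ of the intermediate states behave across the $A$-, $B$-, and $X$-resolutions in the presence of the black box $T$. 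A secondary subtlety is making sure the chosen evaluation is not accidentally symmetric under the two schemes — i.e.\ verifying that the essential virtual crossing genuinely sees $T$ rather than being screened off from it — which may require a small case check on how $T$ interacts with the rest of the diagram.
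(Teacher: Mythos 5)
Your overall strategy coincides with the paper's: establish $R(G)\neq R(G;A,1)$ and invoke the contrapositive of Corollary 3.17 of \cite{Deng}, since a diagram with no virtual crossings and with head and tail in a common region would have a classical virtual closure on which the two polynomials agree. However, there is a genuine gap at the decisive step: you never actually exhibit the discrepancy between the two polynomials. You only describe a plan to ``isolate the lowest- or highest-degree term in $A$ where this discrepancy surfaces'' and assert that it ``survives'' because of the forbidden contraction near the endpoints. There is no a priori reason the two state sums should differ in an extreme-degree coefficient rather than agree there and differ (or not) in the middle, so this is exactly the point that requires an argument or an explicit computation. The paper closes this gap by a different device: after resolving the crossings according to Definition \ref{def:DengDef} it groups the resulting terms so that one summand is $\alpha\cdot\overline{G}$ for an explicit sub-graphoid $\overline{G}$, and then quotes the computation already carried out in \cite{Deng} that $R(\overline{G};A,1)\neq R(U;A,1)$ while $R(\overline{G})=R(U)$, where $U$ is the unknotted theta curve; the inequality for $G$ is inherited from this known inequality rather than produced by a fresh degree analysis.

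Two further problems. First, the theorem asserts purity for the whole family parametrized by the tangle $T$, but you propose to ``specialize $T$ to the simplest nontrivial choice,'' which would prove the statement for only one member of the family; a correct argument must show the discrepancy persists for arbitrary $T$, which the paper's grouping of terms is designed to do by separating the part of the expansion responsible for the inequality from the contribution of $T$. Second, your appeal to the fundamental-group lemma of Section 2 to pass from ``not classical'' to ``pure'' is off target: that lemma concerns Wirtinger presentations and the underpass closure, not the position of the endpoints. The passage from $R(G)\neq R(G;A,1)$ to purity needs only Corollary 3.17 applied to a hypothetical diagram with head and tail in one region.
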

\begin{proof}
     After the resolutions are performed at each crossing according to the rules in Definition \ref{def:DengDef}, we can group the terms in $R(G;A,1)$ as shown in Figure \ref{fig:pure}. Let $\overline{G}$ denote the graphoid attached to the $\alpha$ term in Figure \ref{fig:pure}. It was shown in \cite{Deng} that $R(\overline{G};A,1)\neq R(U;A,1),$ but $R(\overline{G})=R(U),$ where $U$ can be represented by the diagram of the standard unknotted theta curve in $\mathbb{R}^3$. Due to this inequality, $R(G;A,1)\neq R(G)$ either.
\end{proof}
\begin{figure}[ht!]

\centering
\includegraphics[width=9cm]{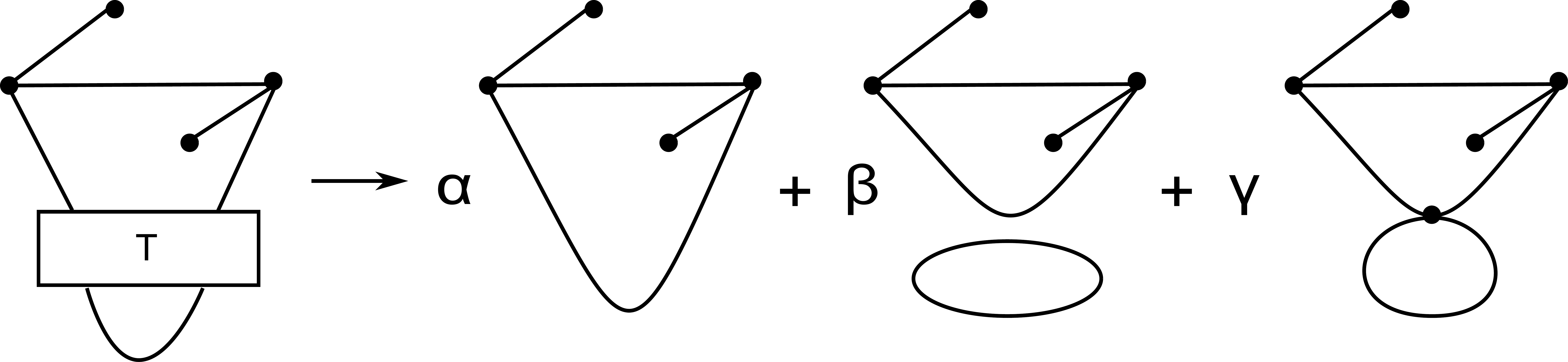}
\caption{A family of pure graphoids.}\label{fig:pure}
\end{figure}

\end{document}